\newcommand{\cO}{\mathcal{O}}
\newcommand{\cG}{\mathcal{G}}
\newcommand{\cR}{\mathcal{R}}
\newcommand{\cN}{\mathcal{N}}
\newcommand{\cE}{\mathcal{E}}
\newcommand{\cM}{\mathcal{M}}
\newcommand{\cB}{\mathcal{B}}
\newcommand{\sa}{\mathsf{a}}
\newcommand{\Cx}{\mathbb{C}}
\newcommand{\Z}{\mathbb{Z}}
\newcommand{\F}{\mathbb{F}}
\newcommand{\sV}{\mathsf{T}}
\newcommand{\sW}{\mathsf{W}}
\newcommand{\sG}{\mathsf{G}}
\newcommand{\sR}{\mathsf{R}}
\newcommand{\sF}{\mathsf{F}}
\newcommand{\Irr}{\mathrm{Irr}}
\newcommand{\Ind}{\mathrm{Ind}}
\newcommand{\tF}{\tilde{F}}
\newcommand{\vep}{\varepsilon}
\newcommand{\epi}{\varepsilon_{\iota}}
\newcommand{\eps}{\varepsilon_{\sigma}}
\newcommand{\gO}{\mathrm{O}}
\newcommand{\SO}{\mathrm{SO}}
\newcommand{\CSO}{\mathrm{CSO}}
\newcommand{\PCSO}{\mathrm{PCSO}}
\newcommand{\CO}{\mathrm{CO}}
\newcommand{\Sp}{\mathrm{Sp}}
\newcommand{\CSp}{\mathrm{CSp}}
\newcommand{\GL}{\mathrm{GL}}
\newcommand{\GU}{\mathrm{U}}
\newcommand{\cP}{\mathcal{P}}
\newcommand{\cS}{\mathcal{S}}
\newcommand{\bG}{\mathbf{G}}
\newcommand{\bH}{\mathbf{H}}
\newcommand{\bT}{\mathbf{T}}
\newtheorem{theorem}{Theorem}[section]
\newtheorem{proposition}{Proposition}[section]
\newtheorem{lemma}{Lemma}[section]
\newtheorem{cor}{Corollary}[section]
\newtheorem*{thm}{Theorem}
\numberwithin{equation}{section}
\begin{document}

\title [Totally orthogonal finite simple groups]{Totally orthogonal finite simple groups}

\author{C. Ryan Vinroot}
\address{Department of Mathematics\\College of William and Mary\\ Williamsburg, VA, 23187\\USA}
\email{vinroot@math.wm.edu}


\begin{abstract}  We prove that if $G$ is a finite simple group, then all irreducible complex representations of $G$ may be realized over the real numbers if and only if every element of $G$ may be written as a product of two involutions in $G$.  This follows from our result that if $q$ is a power of 2, then all irreducible complex representations of the orthogonal groups $\gO^{\pm}(2n,\F_q)$ may be realized over the real numbers.  We also obtain generating functions for the sums of degrees of several sets of unipotent characters of finite orthogonal groups, and we obtain a twisted version of our main result for a broad family of finite classical groups.
\\
\\
2010 {\it AMS Mathematics Subject Classification}:  20C33, 20D05, 05A15
\end{abstract}

\maketitle

\thispagestyle{empty}

\section{Introduction}

Suppose $G$ is a finite group and $\pi: G \rightarrow \GL(d, \Cx)$ is an irreducible complex representation of $G$.  Then $\pi$ may be realized over the real numbers if and only if there exists a non-degenerate $G$-invariant symmetric form $B$ on $V=\Cx^d$.  That is, a basis for $V= \Cx^d$ exists such that every resulting matrix for $\pi(g)$ has real entries precisely when $\pi$ maps $G$ into an orthogonal group $\gO_d(B)$ defined by $B$.  Thus such representations are called {\em orthogonal}, and if $G$ has the property that all of its irreducible complex representations are orthogonal then $G$ is called {\em totally orthogonal}.  While an irreducible representation of $G$ with real-valued character may or may not be orthogonal, it is known that the number of real-valued irreducible characters of $G$ is equal to the number of {\em real conjugacy classes} of $G$, which are conjugacy classes of elements which are conjugate to their own inverses.  Brauer asked \cite[Problem 14]{Br63} whether the number of irreducible orthogonal representations of a finite group $G$ can be expressed in terms of group-theoretical data.  A sub-question of this is whether we can give a group-theoretical condition which is equivalent to $G$ being totally orthogonal.  It is the latter question which we answer in this paper for the case that $G$ is a finite simple group. 

An element $g \in G$ is called {\em strongly real} if there exists some $h \in G$ such that $h^2 = 1$ and $hgh^{-1} = g^{-1}$, or equivalently if $g = h_1 h_2$ for some $h_1, h_2 \in G$ such that $h_1^2 = h_2^2 = 1$.  A finite group $G$ is {\em strongly real} if every element of $G$ is strongly real.  It was stated as a conjecture in \cite{KaKu}, attributed to P. H. Tiep, that a finite simple group $G$ is totally orthogonal if and only if it is strongly real.  It is the verification of this conjecture that is the main result of this paper:
\begin{thm} [Theorem \ref{SimpleTO}]  Let $G$ be a finite simple group.  Then $G$ is totally orthogonal if and only if $G$ is strongly real.
\end{thm}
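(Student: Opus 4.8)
The plan is to prove the equivalence one family at a time, running through the classification of finite simple groups, and in each case showing that ``totally orthogonal'' and ``strongly real'' either both hold or both fail.

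\textbf{Step 1: a common necessary condition.} If $G$ is totally orthogonal then every $\chi\in\Irr(G)$ is real-valued, so by the equality of the number of real-valued irreducible characters with the number of real conjugacy classes, \emph{every} conjugacy class of $G$ is real; and if $G$ is strongly real then writing $g=h_1h_2$ with $h_i^2=1$ gives $h_1gh_1^{-1}=g^{-1}$, so again every class is real. Hence, if a simple group $G$ has a non-real conjugacy class, then it is neither totally orthogonal nor strongly real, and the equivalence holds vacuously. I would invoke the known determination of real conjugacy classes in finite simple groups to discard in one stroke the families that always contain a non-real class --- in particular $\mathrm{PSL}_n(q)$ and $\mathrm{PSU}_n(q)$ for $n\ge 3$ (away from small coincidences), the groups of type $E_6$ and ${}^2E_6$ and ${}^3D_4$, and the alternating groups $A_n$ for all but a short list of $n$. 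After this reduction one is left with a comparatively small list of simple groups, all of whose classes are real, and for each of these the task is to prove that strong reality and total orthogonality are equivalent.

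\textbf{Step 2: the ``small'' groups.} For $G$ cyclic of prime order, either the order is $2$ (both properties trivially hold) or it is odd (non-real classes, so Step 1 applies). For the $26$ sporadic groups and the Tits group, the Frobenius--Schur indicators and the real-class data are tabulated, so one checks directly that the two properties agree. For the alternating groups $A_n$, $n\ge 5$, I would combine the known classification of the strongly real $A_n$ with the known description of the orthogonal irreducible characters of $A_n$ --- obtained by Clifford theory from $S_n$, all of whose irreducible characters are orthogonal --- and verify that these two conditions single out the same values of $n$.

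\textbf{Step 3: groups of Lie type --- reduction.} Write $G$ as the simple group attached to $\bG^F$ for a connected reductive $\bG$ over $\overline{\F}_q$ with Steinberg endomorphism $F$. Imposing (Step 1) that all classes of $\bG^F$ are real, the classification of real classes in groups of Lie type forces $G$ into an explicit short list: essentially $\mathrm{PSL}_2(q)$, the symplectic groups $\Sp_{2n}(q)$ and $\mathrm{PSp}_{2n}(q)$, the orthogonal groups $\gO_{2n+1}(q)$ (i.e.\ $\Omega_{2n+1}(q)$) and $\mathrm{P}\Omega^{\pm}_{2n}(q)$, together with a handful of exceptional types (such as ${}^2B_2(q)$, $G_2(q)$, ${}^2G_2(q)$, $F_4(q)$, $E_7(q)$, $E_8(q)$, ${}^2F_4(q)'$) and finitely many small-rank exceptions. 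For each of these I would pin down exactly when it is strongly real (already recorded in the literature --- with genuinely delicate cases, e.g.\ $\Sp_{2n}(q)$ for $q$ odd, where strong reality depends on $q\bmod 4$ and on $n$) and then prove total orthogonality under precisely the same hypotheses. For $\mathrm{PSL}_2(q)$, for the symplectic and odd-orthogonal groups in odd characteristic, and for the exceptional families, I would cite the existing computations of Frobenius--Schur indicators; the new input, and the crux of the theorem, is the orthogonal groups in characteristic $2$.

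\textbf{Step 4: the characteristic-$2$ orthogonal groups (the main obstacle).} The key assertion to establish is that for $q$ a power of $2$ the groups $\gO^{\pm}(2n,\F_q)$ are totally orthogonal; granting this, $\Sp_{2n}(q)\cong\gO_{2n+1}(q)$ is covered, and the simple quotients $\Omega^{\pm}_{2n}(q)$ are handled via Clifford theory, since the $\gO$-invariant symmetric forms restrict along $\Omega\trianglelefteq\gO$ and, every class being real, no constituent can carry indicator $-1$. To prove $\gO^{\pm}(2n,\F_q)$ totally orthogonal I would use the classical reformulation: because $\sum_{\chi\in\Irr(G)}\vep_2(\chi)\chi(1)=\#\{g\in G: g^2=1\}$ and $\vep_2(\chi)\le 1$ with $\chi(1)>0$, the group $G$ is totally orthogonal if and only if $\sum_{\chi\in\Irr(G)}\chi(1)=\#\{g\in G: g^2=1\}$. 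On the right, in characteristic $2$ the elements with $g^2=1$ are exactly the unipotent elements whose Jordan blocks all have size at most $2$, and these can be enumerated within $\gO^{\pm}(2n,\F_q)$; on the left, Lusztig's Jordan decomposition of characters reduces $\sum_\chi\chi(1)$ to sums of degrees of (twisted) unipotent characters of products of smaller symplectic and orthogonal groups. The main obstacle is then the combinatorial identity ``degree sum $=$ involution count,'' uniformly in $n$: this is precisely what the generating-function computations for sums of degrees of unipotent characters of orthogonal groups are designed to do, matching the two sides via Cauchy-type identities, and one must simultaneously control how Frobenius--Schur indicators behave under Jordan decomposition, including the passage from the connected group $\SO$ to the disconnected group $\gO$ and the separate bookkeeping for the two types $\pm$ and for the simple subquotient --- this disconnectedness is the technical heart of the argument.
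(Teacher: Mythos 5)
Your overall reduction matches the paper's: both properties force realness, the classifications of real and strongly real simple groups (together with existing indicator computations summarized in \cite[Theorem 3.2]{TV17}) reduce everything to the even-characteristic orthogonal groups, the criterion $\sum_{\chi\in\Irr(G)}\chi(1)=\#\{g\in G \mid g^2=1\}$ is Lemma \ref{CharSumLemma}, and your Clifford-theoretic passage from $\gO^{\pm}(4m,\F_q)$ to the simple group $\Omega^{\pm}(4m,\F_q)=\SO^{\pm}(4m,\F_q)$ is exactly Lemma \ref{index2}(i) combined with R\"am\"o's strong reality result \cite{Ra11}. The genuine gap is in your Step 4, at the only truly new point of the theorem: you assert that the identity ``character degree sum $=$ involution count'' for $\gO^{\pm}(2n,\F_q)$ with $q$ even can be established by ``matching the two sides via Cauchy-type identities'' applied to sums of unipotent character degrees. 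No such direct combinatorial argument is supplied, and none is known; the paper itself remarks (after Theorem \ref{GenFunSO}) that a direct symmetric-function proof of these unipotent degree-sum identities ``should exist'' but does not give one. The paper's essential idea, which your plan is missing, is to avoid proving that identity directly: for $q$ odd, Gow's theorem \cite{Gow85} already gives total orthogonality of $\gO^{\pm}(2n,\F_q)$, so the degree-sum generating function equals the Fulman--Guralnick--Stanton involution generating function \cite{FGS} (Proposition \ref{IndicatorsO}(i)); feeding this into Proposition \ref{GenFunO} and Proposition \ref{OldResult} with $e=2$ lets one \emph{solve} for the unipotent factor $\sV(z)$ (Theorem \ref{GenFunUnipO}); because unipotent character degrees are the same polynomials in $q$ for either parity, that product formula persists for $q$ even, and substituting it back with $e=1$ reproduces precisely the even-$q$ involution generating function, giving Theorem \ref{OisTO} via Proposition \ref{IndicatorsO}(ii). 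Without this transfer from odd $q$, your argument reduces the theorem to an unproved combinatorial identity rather than proving it.

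A second, related shortfall: you correctly flag the disconnectedness of $\gO_{2n}$ as ``the technical heart,'' but give no mechanism for handling it. The paper needs the $\sigma$-equivariant Jordan decomposition for $\SO^{\pm}(2n,\F_q)$ (Proposition \ref{SigmaEqui}) to construct, following \cite{AuMiRo}, a Jordan decomposition for the disconnected group with the exact degree formula of Proposition \ref{JordanO}, whose case-by-case factors of $2$ (according to whether $s$ has $\pm1$ eigenvalues, and whether the relevant symbols are degenerate) are precisely what make the generating functions of Propositions \ref{GenFunO} and \ref{GenFunSO} close into the stated products. Merely invoking ``Lusztig's Jordan decomposition'' does not supply this bookkeeping, and it is where a naive version of your Step 4 would break down.
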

We remark that this statement does not hold in general when $G$ is not simple, as given by the families of counterexamples found by Kaur and Kulshrestha in \cite{KaKu}.

A brief recent history of the progress on this result is as follows.  Tiep and Zalesski \cite{TZ05} classified precisely which finite simple groups are {\em real}, that is, have the property that all conjugacy classes are real.  A program then began of classifying which finite simple groups are strongly real, which was concluded in the papers \cite{VG10, Ra11}.  This program yielded the somewhat surprising result that all real finite simple groups are strongly real.  Other than several alternating groups and two sporadic groups, the rest of the groups on the list of strongly real finite simple groups are groups of Lie type.  When one considers whether these groups are totally orthogonal, as explained in \cite[Section 3]{TV17}, this follows for many of the groups from previous work, like the paper of Gow \cite{Gow85} which covers the simple symplectic and orthogonal groups in odd characteristic.  The only cases which do not follow from previous results are the symplectic groups $\Sp(2n, \F_q)$ when $q$ is a power of $2$, which this author proved in \cite{V17}, and the special orthogonal groups $\Omega^{\pm}(4m, \F_q) = \SO^{\pm}(4m, \F_q)$ when $q$ is a power of 2, which we complete here in Theorem \ref{SO4misTO}.  This statement follows from our result in Theorem \ref{OisTO} that the full orthogonal group $\gO^{\pm}(2n, \F_q)$ is totally orthogonal when $q$ is a power of $2$.  Since every real finite simple group is strongly real, and so totally orthogonal from our main result, then a finite simple group is real if and only if it is totally orthogonal.  This translates into a curious behavior of the Frobenius-Schur indicators of finite simple groups, which we give in Corollary \ref{SimpleFSCor}.  We point out that all of these results are dependent on the classification of finite simple groups, and it would be very satisfying to understand these properties without the classification.

We now give an outline of this paper and the main argument.  We give background on the finite orthogonal and special orthogonal groups in Section \ref{OrthoDef}.  In Section \ref{IndicatorsPre}, we give background on the standard and twisted Frobenius-Schur indicator, including the crucial (twisted) involution formula which says that the character degree sum for a finite group is equal to the number of (twisted) involutions if and only if all (twisted) indicators are 1.  In Section \ref{InvolsSec} we give the generating functions for the number of involutions in the finite orthogonal groups due to Fulman, Guralnick, and Stanton \cite{FGS}, and for the number of involutions in the finite special orthogonal groups or in its other coset in the orthogonal groups, from \cite{TV17}.  These generating functions motivate our main method in the following way.  When $q$ is odd, the Frobenius-Schur indicators of $\gO^{\pm}(2n, \F_q)$, $\SO^{\pm}(4m, \F_q)$, and $\SO^{\pm}(4m+2, \F_q)$ (a twisted indicator in the last case) are known to be $1$.  We may thus equate the generating functions for the relevant sets of involutions with a generating function for the character degree sum of the associated group.  Moreover, such an equality holds for $q$ even if and only if the indicators of interest are all 1 for that case.  To take advantage of these two facts, our main goal for the bulk of the paper is to directly calculate a generating function for the character degree sums of these groups through the character theory of finite reductive groups.  The crux of the argument is that we are able to obtain generating functions for the character degree sums of $\gO^{\pm}(2n, \F_q)$ and $\SO^{\pm}(2n, \F_q)$, where only a factor involving unipotent character degrees for these groups is not expanded as a nice infinite product.  But, the degrees of unipotent characters are the same polynomials in $q$ whether $q$ is even or odd, and since we have another generating function for these character degree sums when $q$ is odd through the involution count, we are able to solve for the generating function for sums of unipotent character degrees.  Using this, we are able to conclude the character degree sums in the case $q$ is even match those for the desired involution counts, giving the main results.  This is the same method which is employed for the case of $\Sp(2n, \F_q)$ with $q$ even (and $\SO(2n+1, \F_q)$ with $q$ odd) in \cite{V17}, although in the cases at hand there are several obstructions of order 2 which make the calculation more complicated which we now explain.

First, we must deal with both the split and non-split orthogonal and special orthogonal groups.  We take care of this by dealing with them both simultaneously, as the count for the number of involutions in both of these groups takes a nice form, and the combinatorics of their unipotent characters taken together is well-behaved.  Another point is that we have the groups $\SO^{\pm}(4m, \F_q)$ are totally orthogonal, while the groups $\SO^{\pm}(4m+2, \F_q)$ are not.  This is resolved by considering the aforementioned twisted indicators, twisted by the order 2 graph automorphism in the latter case.  These cases are also unified when considering the full orthogonal group, which are all totally orthogonal when $q$ is odd, although this introduces an order 2 obstruction again since the groups $\gO^{\pm}(2n, \F_q)$ are the $\F_q$-points of a disconnected algebraic group with 2 components.  To obtain the character degrees of the finite orthogonal and special orthogonal groups, one must understand the structure of centralizers in these groups of the semisimple elements in the special orthogonal groups, which is done in Section \ref{CentsSemi}.  Because these groups have center of order 2 when $q$ is odd, some of these centralizers are the groups of finite points of disconnected groups with 2 components.  We must also understand the unipotent characters of these centralizers, which are discussed in Section \ref{Unipotent}.  The disconnectedness of some centralizers means we must induce some unipotent characters from an index 2 subgroup and understand the behavior.  Through the Jordan decomposition of characters, the topic of Section \ref{JordanD}, we can obtain all character degrees of connected finite reductive groups through the orders of centralizers of semisimple elements, and the degrees of their unipotent characters.  Despite the disconnectedness of $\gO^{\pm}(2n, \F_q)$, a Jordan decomposition of characters was obtained by Aubert, Michel, and Rouquier \cite{AuMiRo} from the equivariance of the Jordan decomposition map for $\SO^{\pm}(2n, \F_q)$ under the order 2 graph automorphism (see Proposition \ref{SigmaEqui}), and a formula for character degrees of $\gO^{\pm}(2n, \F_q)$ results in Proposition \ref{JordanO}.

After the calculations are made in the above steps, we are able to compute generating functions for the sums of (modified) character degrees of $\gO^{\pm}(2n, \F_q)$ and $\SO^{\pm}(2n, \F_q)$ in Section \ref{GenFuns}.  The main results are then finally obtained in Section \ref{MainRs}, including combinatorial identities for the sums of (modified) character degrees of several sets of unipotent characters of $\SO^{\pm}(2n, \F_q)$, and while $\SO^{\pm}(4m+2, \F_q)$ is not totally orthogonal, we do show that all real-valued characters are orthogonal in Theorem \ref{SOIndicators1}.  In Section \ref{symplectic}, we use one of the main results to give a significantly shorter proof of the fact that certain twisted indicators of $\Sp(2n, \F_q)$ are all 1 when $q$ is odd, which was first proved in \cite{V05}.  In the last Section \ref{Twizted}, we expand on the above and other results to obtain a sort of twisted version of the main result Theorem \ref{SimpleTO} for certain finite classical groups.  Namely, in Theorem \ref{TwistList} we show that there is an order 2 automorphism for all simple adjoint classical algebraic groups over finite fields, such that every element is a product of two twisted involutions, and every irreducible representation has twisted indicator $1$.  This suggests a larger picture which we hope to understand in the future.
\\
\\
\noindent{\bf Acknowledgements. } The author thanks Mandi Schaeffer Fry and Jay Taylor for helpful correspondence regarding Proposition \ref{SigmaEqui}.  The author was supported in part by a grant from the Simons Foundation, Award \#280496.

\section{Preliminaries}

\subsection{Orthogonal groups over finite fields}  \label{OrthoDef}
We follow the construction of the even-dimensional orthogonal groups over finite fields as given in \cite[Example 22.9]{MaTe11}.  Let $q$ be a power of a prime $p$, and let $\F_q$ be a finite field with $q$ elements with fixed algebraic closure $\overline{\F}_q$.  On the $\overline{\F}_q$-vector space $V=\overline{\F}_q^{2n}$, where $v \in V$ is given by coordinates $v=(x_1, \ldots x_{2n})$, define the quadratic form $Q$ by 
$$ Q(v) = x_1 x_{2n} + \cdots + x_n x_{n+1}.$$
The stabilizer of the form $Q$ in the general linear group $\GL_{2n} = \GL(2n, \overline{\F}_q)$ is the orthogonal group with respect to $Q$, denoted $\gO_{2n} = \gO_{2n}(Q) = \gO(2n, \overline{\F}_q)$.  This is a disconnected group, and the connected component $\gO_{2n}^{\circ}$ is defined to be the special orthogonal group with respect to $Q$, so $\gO_{2n}^{\circ} = \SO_{2n} = \SO(2n, \overline{\F}_q)$.

Now take $F$ to be the standard Frobenius endomorphism on $\GL_{2n}$ with respect to $\F_q$, so if $g = (a_{ij}) \in \GL_{2n}$, then $F(g) = (a_{ij}^q)$.  Restrict this map to $\gO_{2n}$, and the fixed points define the \emph{orthogonal group over $\F_q$ of $+$-type}, and the $F$-fixed points of $\SO_{2n}$ form the \emph{special orthogonal group over $\F_q$ of $+$-type}:
$$ \gO^+(2n, \F_q) = \gO_{2n}^F, \quad \SO^+(2n, \F_q) = \SO_{2n}^F.$$
These are also called the \emph{split} orthogonal and special orthogonal groups over $\F_q$.

Now take the element $h \in \gO_{2n}$ defined by
$$ h = \left( \begin{array} {llll} I_{n-1} &  &  &  \\  &  0 & 1 &  \\   &  1 & 0  &  \\  &  &  & I_{n-1} \end{array} \right).$$
Then $h \in \gO_{2n} \setminus \SO_{2n}$ (whether $q$ is even or odd).  Let $\sigma$ denote the automorphism on $\gO_{2n}$ and $\SO_{2n}$ defined by conjugation by $h$.  This defines a graph automorphism of order 2 on the Dynkin diagram of $\SO_{2n}$.  Now the map $\tF = \sigma F$ defines a twisted Frobenius morphism (or a Steinberg morphism) on the groups $\gO_{2n}$ and $\SO_{2n}$.  The \emph{orthogonal} and \emph{special orthogonal goups over $\F_q$ of $-$-type} are defined to be the groups of $\tF$-fixed points of $\gO_{2n}$ and $\SO_{2n}$:
$$ \gO^-(2n, \F_q) = \gO_{2n}^{\tF}, \quad \SO^-(2n, \F_q) = \SO_{2n}^{\tF},$$
also called the \emph{non-split} orthogonal and special orthogonal groups over $\F_q$.  We note that we could also define the split or non-split groups as the stabilizers of inequivalent quadratic forms on $\F_q^{2n}$.

When speaking of either the split or non-split orthogonal or special orthogonal groups, we will use the notation $\gO^{\pm}(2n, \F_q)$ or $\SO^{\pm}(2n, \F_q)$, respectively.  Note that we have
\begin{equation} \label{OSOindex2}
\gO^{\pm}(2n, \F_q) = \langle \SO^{\pm}(2n, \F_q), h \rangle \cong \SO^{\pm}(2n, \F_q) \rtimes \langle \sigma \rangle.
\end{equation}

In the case that $q$ is even, the groups $\SO^{\pm}(2n, \F_q)$ are finite simple groups, and are also denoted by $\Omega^{\pm}(2n, \F_q)$ (and are the derived groups of the finite orthogonal group in this case).  

The orders of the groups defined above are as follows, for $n \geq 1$:
$$|\gO^{\pm}(2n, \F_q)| = 2q^{n^2 - n} (q^n \mp 1) \prod_{i=1}^{n-1} (q^{2i} - 1) \text{ and } |\SO^{\pm}(2n, \F_q)| = \frac{1}{2} |\gO^{\pm}(2n, \F_q)|.$$
While we do not define these groups in the case $n=0$, we will several times need to define notions for the $n=0$ case for the purpose of constant terms in power series.

\subsection{Indicators}  \label{IndicatorsPre}

Let $G$ be a finite group with $\iota$ an automorphism of $G$ satisfying $\iota^2=1$.  Suppose $(\pi, V)$ is an irreducible complex representation of $G$ satisfying $\iota.\pi \cong \hat{\pi}$, where $(\hat{\pi}, \hat{V})$ is the dual representation of $(\pi, V)$, and the representation $(\iota.\pi, V)$ is defined by $\iota.\pi = \pi \circ \iota$.  It follows from Schur's Lemma that there exists a non-degenerate bilinear form $B$ on $V$, unique up to scalar multiple, which satisfies
\begin{equation} \label{BiFormInd}
B(\iota.\pi(g)v, \pi(g)w) = B(v, w),
\end{equation}
for all $g \in G$ and all $v, w \in V$.  By the uniqueness of $B$ and exchanging the variables, it follows that $B$ must be either a symmetric or an alternating form.  We define the \emph{twisted Frobenius-Schur indicator} of $\pi$ with respect to $\iota$, denoted $\epi(\pi)$, as the sign which appears when exchanging the variables of $B$.  That is,
\begin{equation} \label{BiFormInd2} 
B(v, w) = \epi(\pi) B(w, v)
\end{equation}
for all $v, w \in V$.  If $\iota.\pi \not\cong \hat{\pi}$, then we define $\epi(\pi) =0$. 

For the case that $\iota$ has order 2, the twisted Frobenius-Schur indicator was defined by Kawanaka and Matsuyama \cite{KaMa90}, and further investigated and generalized by Bump and Ginzburg \cite{BuGi04}, and all of its basic properties we give in this section can be found in those papers.  When $\iota=1$ is the trivial automorphism, then $\epi(\pi) = \vep(\pi)$ is the classical Frobenius-Schur indicator, and the properties in this case can be found in standard references, for example \cite[Chapter 4]{Isaacs}.

Let $\chi$ be the irreducible character of the representation $(\pi, V)$, and we let $\Irr(G)$ denote the set of all irreducible characters of $G$.  The condition $\iota.\pi \cong \hat{\pi}$ is equivalent to $\iota.\chi = \bar{\chi}$, where $\iota.\chi = \chi \circ \iota$.  We also write $\epi(\chi) = \epi(\pi)$, and refer to the twisted Frobenius-Schur indicator of the character $\chi$.  Given some $\Cx$-basis $\cB$ of $V$, let $[\pi]_{\cB}$ denote the resulting matrix representation corresponding to $(\pi, V)$.  The twisted Frobenius-Schur indicator also has the following characterization:
$$ \epi(\chi) = \left\{ \begin{array}{rl} 1 & \text{ if there is some } \cB \text{ such that } [\iota.\pi]_{\cB} = \overline{[\pi(g)]_{\cB}} , \\ -1 & \text{ if } \iota.\chi = \bar{\chi} \text{ but there is no } \cB \text{ such that } [\iota.\pi]_{\cB} = \overline{[\pi]_{\cB}}, \\ 0 & \text{ if } \iota.\chi \neq \bar{\chi}. \end{array} \right. $$
In the classical case when $\iota=1$, this says that $\vep(\chi) = \pm 1$ if and only if $\chi$ is a real-valued character, and $\vep(\chi) = 1$ exactly when $(\pi, V)$ may be realized as a real representation.  By \eqref{BiFormInd} and \eqref{BiFormInd2}, $\vep(\chi) = 1$ is equivalent to being able to embed the image of $\pi$ in an orthogonal group (defined by $B$).  As mentioned in the introduction, for this reason we say that the finite group $G$ is \emph{totally orthogonal} if $\vep(\chi) = 1$ for all $\chi \in \Irr(G)$.  The condition $\epi(\chi) = \pm 1$ for all $\chi \in \Irr(G)$ is equivalent to $\iota(g)$ being $G$-conjugate to $g^{-1}$ for all $g \in G$.  We say $G$ is a \emph{real group} if $g$ and $g^{-1}$ are $G$-conjugate for all $g \in G$, since this is equivalent to all $\chi \in \Irr(G)$ being real-valued.

Given $\chi \in \Irr(G)$, one may compute $\epi(\chi)$ with the following formula:
$$ \epi(\chi) = \frac{1}{|G|} \sum_{g \in G} \chi(g \, \iota(g)).$$
From this formula and orthogonality relations for characters, we obtain the following generalization of the classical Frobenius-Schur involution formula:
$$ \sum_{ \chi \in \Irr(G)} \epi(\chi) \chi(1) = \# \{ g \in G \, \mid \, \iota(g) = g^{-1} \}.$$
We may immediately conclude the following, which is crucial to our main arguments in this paper.

\begin{lemma} \label{CharSumLemma}
Let $G$ be a finite group, and $\iota$ an automorphism of $G$ satisfying $\iota^2 = 1$.  Then $\epi(\chi) = 1$ for all $\chi \in \Irr(G)$ if and only if
$$ \sum_{\chi \in \Irr(G)} \chi(1) = \# \{ g \in G \, \mid \, \iota(g) = g^{-1} \}.$$
In particular, $G$ is totally orthogonal if and only if
$$ \sum_{\chi \in \Irr(G)} \chi(1) = \# \{ g \in G \, \mid \, g^2 = 1 \}.$$
\end{lemma}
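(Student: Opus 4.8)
The plan is to obtain the lemma as a formal consequence of the twisted Frobenius--Schur involution formula
\[
\sum_{\chi \in \Irr(G)} \epi(\chi)\chi(1) = \#\{ g \in G \mid \iota(g) = g^{-1} \}
\]
recorded just above, combined with the fact, also noted above, that $\epi(\chi) \in \{-1, 0, 1\}$ for every $\chi \in \Irr(G)$ (which is part of the matrix-realization characterization of $\epi$).

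First I would observe that $\chi(1) \geq 1$ and $\epi(\chi) \leq 1$ for every irreducible character $\chi$, so that $\epi(\chi)\chi(1) \leq \chi(1)$ for each $\chi$, with equality if and only if $\epi(\chi) = 1$. Summing over $\Irr(G)$ and applying the involution formula gives
\[
\#\{ g \in G \mid \iota(g) = g^{-1} \} \;=\; \sum_{\chi \in \Irr(G)} \epi(\chi)\chi(1) \;\leq\; \sum_{\chi \in \Irr(G)} \chi(1),
\]
and since each summand $\chi(1)$ is strictly positive, equality holds in this inequality precisely when $\epi(\chi)\chi(1) = \chi(1)$ for every $\chi$, that is, precisely when $\epi(\chi) = 1$ for all $\chi \in \Irr(G)$. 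This is the first equivalence in the statement. For the ``in particular'' claim I would apply the equivalence with $\iota = 1$, which trivially satisfies $\iota^2 = 1$: then $\epi = \vep$ is the ordinary Frobenius--Schur indicator, the condition $\epi(\chi) = 1$ for all $\chi$ is by definition the assertion that $G$ is totally orthogonal, and $\iota(g) = g^{-1}$ becomes $g^2 = 1$, so the general statement specializes to the desired one.

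There is essentially no obstacle here: the substantive content, namely the involution formula, has already been derived from the character-sum expression for $\epi(\chi)$ and the orthogonality relations, and the present lemma is just the extraction of an ``if and only if'' from the termwise bound $\epi(\chi)\chi(1) \leq \chi(1)$. The only point requiring a moment's care is the passage from equality of the two sums to termwise equality, which is legitimate exactly because every degree $\chi(1)$ is strictly positive.
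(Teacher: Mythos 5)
Your argument is correct and is exactly the route the paper intends: the lemma is stated there as an immediate consequence of the twisted involution formula $\sum_\chi \epi(\chi)\chi(1) = \#\{g \mid \iota(g)=g^{-1}\}$, with equality of the two sums forcing $\epi(\chi)=1$ termwise because each $\chi(1)>0$ and $\epi(\chi)\le 1$. The specialization to $\iota = 1$ for the ``in particular'' statement also matches the paper's reading.
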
 

Consider the specific case of a finite group $G$ with $[G:H]=2$, such that $G$ is a split extension of $H$.  That is, we have $G = \langle H, y \rangle$ for some $y \in G \setminus H$ such that $y^2 = 1$.  We may then define the order 2 automorphism $\iota$ on $H$ by $\iota(h) = y h y^{-1}$, and we may consider the indicators $\epi(\chi)$ for $\chi \in \Irr(H)$.  The following result is given in \cite[Lemma 2.3]{TV17}, and will be relevant to our main groups of interest, as in \eqref{OSOindex2}.

\begin{lemma}  \label{index2} Let $G$ be a finite group with $H \leq G$ such that $[G:H]=2$, with $G = H \cup yH = \langle H, y \rangle$ and $y^2 = 1$.  Define $\iota$ on $H$ by $\iota(h) = y h y^{-1}$.  Then the following hold.
\begin{itemize}
\item[(i)] If $G$ is totally orthogonal and $H$ is a real group, then $H$ is totally orthogonal.
\item[(ii)] If $G$ is totally orthogonal, and $\iota(h)$ is $H$-conjugate to $h^{-1}$ for all $h \in H$, then for all $\chi \in \Irr(H)$ we have $\epi(\chi) = 1$ and $\vep(\chi) \geq 0$.
\end{itemize}
\end{lemma}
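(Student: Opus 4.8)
The plan is to express the ordinary Frobenius--Schur indicator $\vep$ of each irreducible character of $G$ in terms of the indicators $\vep$ and $\epi$ of characters of $H$, using Clifford theory for the normal index-$2$ subgroup $H$, and then to read off (i) and (ii) from the hypothesis that all indicators of $G$ equal $1$.

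First I would set up the Clifford dichotomy: since $[G:H]=2$ and $G=H\cup yH$, each $\chi\in\Irr(H)$ is either $\iota$-invariant, in which case it has exactly two extensions $\tilde\chi,\tilde\chi'$ to $G$ that agree with $\chi$ on $H$, or not $\iota$-invariant, in which case $\{\chi,\iota.\chi\}$ has size $2$, the induced character $\Ind_H^G\chi=\Ind_H^G(\iota.\chi)$ is irreducible with $\Ind_H^G\chi(1)=2\chi(1)$, and $\Ind_H^G\chi|_H=\chi+\iota.\chi$; every element of $\Irr(G)$ arises exactly once this way. Next, using $\vep(\psi)=|G|^{-1}\sum_{g\in G}\psi(g^2)$ for $\psi\in\Irr(G)$ together with $G=H\cup yH$ and the identity $(yh)^2=\iota(h)\,h\in H$ for $h\in H$ (which holds precisely because $y^2=1$, so that $y^{-1}=y$), I would compute: for $\iota$-invariant $\chi$, since $g^2\in H$ for all $g\in G$ and the value of an extension at $g^2$ does not depend on which extension is chosen,
$$\vep(\tilde\chi)=\vep(\tilde\chi')=\tfrac12\big(\vep(\chi)+\epi(\chi)\big);$$
and for non-$\iota$-invariant $\chi$, using $\Ind_H^G\chi(g^2)=\chi(g^2)+(\iota.\chi)(g^2)$ and the elementary identities $\vep(\iota.\chi)=\vep(\chi)$, $\epi(\iota.\chi)=\epi(\chi)$ (both obtained by reindexing the defining sums by $h\mapsto\iota(h)$),
$$\vep(\Ind_H^G\chi)=\vep(\chi)+\epi(\chi).$$

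With these two formulas, both parts are short arithmetic, using only that indicators lie in $\{-1,0,1\}$. For (i): $G$ totally orthogonal makes every left-hand side equal $1$. For $\iota$-invariant $\chi$ this gives $\vep(\chi)+\epi(\chi)=2$, hence $\vep(\chi)=1$. For non-$\iota$-invariant $\chi$ it gives $\vep(\chi)+\epi(\chi)=1$; since $H$ is real, $\vep(\chi)=\pm1$, and $\vep(\chi)=-1$ would force $\epi(\chi)=2$, impossible, so $\vep(\chi)=1$. Thus $H$ is totally orthogonal. For (ii): again every left-hand side is $1$, and $\epi(\chi)=\pm1$ for all $\chi\in\Irr(H)$ by hypothesis (the stated equivalence with $\iota(h)$ being $H$-conjugate to $h^{-1}$). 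For $\iota$-invariant $\chi$, $\epi(\chi)=-1$ would force $\vep(\chi)=3$, impossible, so $\epi(\chi)=1$ and then $\vep(\chi)=1$; for non-$\iota$-invariant $\chi$, $\epi(\chi)=-1$ would force $\vep(\chi)=2$, impossible, so $\epi(\chi)=1$ and $\vep(\chi)=0$. In all cases $\epi(\chi)=1$ and $\vep(\chi)\in\{0,1\}$, i.e. $\vep(\chi)\ge0$.

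The one step needing genuine care is the derivation of the two indicator formulas — in particular verifying that $\vep(\tilde\chi)$ is independent of the choice of extension and that $(yh)^2$ both lies in $H$ and equals $\iota(h)h$ (this is exactly where splitness, $y^2=1$, is used; without it the sum over the coset $yH$ would not read off an indicator of $H$ and the computation would not close up). Everything after that is bookkeeping. One could alternatively argue purely at the level of degree sums via Lemma~\ref{CharSumLemma}, comparing $\sum_{\psi\in\Irr(G)}\psi(1)$ with $\#\{h\in H:h^2=1\}+\#\{h\in H:\iota(h)=h^{-1}\}$, but the indicator-by-indicator version above yields the pointwise conclusions directly.
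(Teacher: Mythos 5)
Your proof is correct. The key identities check out: for $g=yh$ one has $(yh)^2=(yhy^{-1})y^2h=\iota(h)h\in H$, every square $g^2$ lies in $H$ (so the value of an extension at $g^2$ is independent of the choice of extension), and the reindexing $h\mapsto\iota(h)$ does give $\vep(\iota.\chi)=\vep(\chi)$, $\epi(\iota.\chi)=\epi(\chi)$ and $\sum_h\chi(\iota(h)h)=\sum_h\chi(h\,\iota(h))$; from these your two formulas $\vep(\tilde\chi)=\tfrac12\bigl(\vep(\chi)+\epi(\chi)\bigr)$ and $\vep(\Ind_H^G\chi)=\vep(\chi)+\epi(\chi)$ follow, and the case analysis for (i) and (ii) is then forced by the indicators lying in $\{-1,0,1\}$ (with $\epi(\chi)=\pm1$ in (ii) coming from the equivalence, stated in Section \ref{IndicatorsPre}, between $\iota(h)\sim_H h^{-1}$ for all $h$ and all twisted indicators being nonzero). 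Note that the paper itself does not prove this lemma: it is quoted from \cite[Lemma 2.3]{TV17}, so there is no local argument to compare against. Your Clifford-theoretic, indicator-by-indicator computation is a complete self-contained proof in the spirit of that reference, and it yields slightly more than needed (the exact values of $\vep$ and $\epi$ in each case, e.g.\ $\vep(\chi)=0$ and $\epi(\chi)=1$ for the non-$\iota$-invariant characters in (ii)), whereas the degree-sum alternative you mention via Lemma \ref{CharSumLemma} would only give the aggregate statement. The one step you rightly single out, the use of $y^2=1$ (splitness) to get $(yh)^2=\iota(h)h$, is indeed where the hypothesis enters essentially.
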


The following observation, which is \cite[Lemma 2.4]{TV17}, will be applied in the last section of this paper.

\begin{lemma} \label{modZ} Let $G$ be a finite group with automorphism $\iota$ such that $\iota^2 = 1$, and let $Z$ be the center of $G$.  Let $\iota$ also denote the automorphism of $G/Z$ defined by $\iota(gZ) = \iota(g)Z$.  If $\epi(\chi) = 1$ for all $\chi \in \Irr(G)$, then $\epi(\rho) = 1$ for all $\rho \in \Irr(G/Z)$.
\end{lemma}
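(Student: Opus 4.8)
The plan is to identify $\Irr(G/Z)$ with the subset of $\Irr(G)$ consisting of characters whose kernel contains $Z$, via inflation, and then to check that the twisted Frobenius--Schur indicator is unchanged under this identification. First I would observe that since $\iota$ is an automorphism of $G$, it carries the center $Z$ onto itself, so the formula $\iota(gZ) = \iota(g)Z$ does define an automorphism of $G/Z$ whose square is the identity, and the statement is meaningful.

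Next, given $\rho \in \Irr(G/Z)$, let $\chi = \rho \circ \pi \in \Irr(G)$ be its inflation along the quotient map $\pi \colon G \to G/Z$; recall that $\rho \mapsto \chi$ is the standard bijection between $\Irr(G/Z)$ and $\{\chi \in \Irr(G) : Z \leq \ker \chi\}$. Moreover $\iota.\chi = \bar\chi$ holds if and only if $\iota.\rho = \bar\rho$ holds, so the two indicators $\epi(\chi)$ and $\epi(\rho)$ vanish simultaneously, and in the non-vanishing case I claim they are equal. To see this, apply the trace formula $\epi(\chi) = |G|^{-1}\sum_{g \in G}\chi(g\,\iota(g))$ recorded in the excerpt: since $\chi(g\,\iota(g)) = \rho\big(\pi(g)\,\iota(\pi(g))\big)$ and every fiber of $\pi$ has size $|Z|$, summing over $G$ simply reproduces $|Z|$ copies of the sum over $G/Z$, whence $\epi(\chi) = |G/Z|^{-1}\sum_{\bar g \in G/Z}\rho(\bar g\,\iota(\bar g)) = \epi(\rho)$. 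By hypothesis $\epi(\chi) = 1$, so $\epi(\rho) = 1$, as desired. (Equivalently, one may argue that $\rho$ and $\chi$ act on the same space $V$ and, since $\pi$ is surjective, the invariance condition \eqref{BiFormInd} defining the bilinear form $B$ for $\chi$ is literally the same condition as for $\rho$; by uniqueness of $B$ up to scalar the sign in \eqref{BiFormInd2} is the same for both, giving $\epi(\chi) = \epi(\rho)$.)

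I do not anticipate a genuine obstacle here: the result is a routine compatibility of the twisted indicator with inflation through a central quotient. The only points that each deserve a line of justification are that $\iota$ stabilizes $Z$ (so that $\iota$ descends to $G/Z$) and that inflation matches the condition $\iota.\chi = \bar\chi$ with $\iota.\rho = \bar\rho$ (so that nonvanishing of the indicators is simultaneous); both are immediate, and the main computation is the one-line change of variables in the character sum above.
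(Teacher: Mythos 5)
Your argument is correct: inflation identifies $\Irr(G/Z)$ with the irreducible characters of $G$ whose kernel contains $Z$, and since the summand $\chi(g\,\iota(g))=\rho(\pi(g)\,\iota(\pi(g)))$ is constant on fibers of $\pi\colon G\to G/Z$, the trace formula immediately gives $\epi(\rho)=\epi(\chi)=1$. The paper itself offers no proof (it quotes this as Lemma 2.4 of \cite{TV17}), and your inflation-plus-character-sum argument is precisely the standard one.
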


\section{Involutions in orthogonal groups} \label{InvolsSec}

We define an \emph{involution} in a group $G$ to be any element $g \in G$ such that $g^2 = 1$, where we include the identity element as an involution for convenience.  For any subset $X$ of a finite group $G$, we let $I(X)$ denote the number of involutions in $X$.  For any finite group $G$, we let $\Sigma(G)$ denote the sum of the degrees of the irreducible characters of $G$.

From results of Fulman, Guralnick, and Stanton \cite[Theorem 2.11, Theorem 2.12, and Lemma 5.1]{FGS} the number of involutions in $\gO^{\pm}(2n, \F_q)$ for $q$ odd may be obtained by the generating function
\begin{align*}
a_{0}^\pm & + \sum_{n \geq 1} \frac{I(\gO^{\pm}(2n, \F_q))}{|\gO^{\pm}(2n, \F_q)|} q^{n^2} z^n \\ & = \frac{1}{2(1-zq)} \prod_{i \geq 1} \frac{(1 + z/q^{2(i-1)})^2}{1 - z^2/q^{2(i-1)}} \pm \frac{1}{2} \prod_{i \geq 1} \frac{(1 + z/q^{2i-1})^2}{1 - z^2/q^{2(i-1)}},
\end{align*}
where $a_{0}^{+} = 1$ and $a_{0}^{-} = 0$.  It then follows, by replacing $z$ with $z/q$ and taking the sum of the two resulting generating functions for the split and non-split orthogonal groups, that when $q$ is odd,
$$\frac{I(\gO^{+}(2n, \F_q))}{2(q^n -1) \prod_{i=1}^{n-1} (q^{2i} - 1) }  + \frac{I(\gO^{-}(2n, \F_q))} {2(q^n + 1) \prod_{i=1}^{n-1} (q^{2i} - 1)}$$ 
is the coefficient of $z^n$, $n > 0$, in the generating function
$$ \frac{1}{1 - z} \prod_{i \geq 1} \frac{(1 + z/q^{2i-1})^2}{1 - z^2/q^{2i}}.$$
Very similarly, when $q$ is even it follows from \cite[Theorems 2.14, 2.15, and 5.6]{FGS} that
\begin{align*}
a_0^{\pm}  & + \sum_{n \geq 1} \frac{I(\gO^{\pm}(2n, \F_q))}{|\gO^{\pm}(2n, \F_q)|} q^{n^2} z^n \\ & = \frac{1}{2(1-zq)} \prod_{i \geq 1} \frac{1 + z/q^{2(i-1)}}{1 - z^2/q^{2(i-1)}} \pm \frac{1}{2} \prod_{i \geq 1} \frac{1 + z/q^{2i-1}}{1 - z^2/q^{2(i-1)}},
\end{align*}
with $a_0^{\pm}$ as above.  When replacing $z$ with $z/q$ and adding the two generating functions, we have that when $q$ is even, 
$$\frac{I(\gO^{+}(2n, \F_q))}{2(q^n -1) \prod_{i=1}^{n-1} (q^{2i} - 1)}   +  \frac{I(\gO^{-}(2n, \F_q))}{2 (q^n + 1) \prod_{i=1}^{n-1} (q^{2i} - 1)}$$ 
is the coefficient of $z^n$, $n > 0$, in the generating function
$$ \frac{1}{1 - z} \prod_{i \geq 1} \frac{1 + z/q^{2i-1}}{1 - z^2/q^{2i}}.$$

It is a result of Gow \cite[Theorem 1]{Gow85} that when $q$ is odd, the groups $\gO^{\pm}(2n, \F_q)$ are totally orthogonal.  From this result, the generating functions just given, and Lemma \ref{CharSumLemma}, we obtain the following result, which motivates the methods in this paper.

\begin{proposition} \label{IndicatorsO}  The following hold:
\begin{itemize}
\item[(i)]  For any odd $q$, the coefficient of $z^n$, $n>0$, in the generating function
$$ \frac{1}{1 - z} \prod_{i \geq 1} \frac{(1 + z/q^{2i-1})^2}{1 - z^2/q^{2i}}$$
is given by
$$\frac{\Sigma(\gO^{+}(2n, \F_q))}{2(q^n -1) \prod_{i=1}^{n-1} (q^{2i} - 1) }  + \frac{\Sigma(\gO^{-}(2n, \F_q))} {2(q^n + 1) \prod_{i=1}^{n-1} (q^{2i} - 1)}.$$ 
\item[(ii)] For $q$ even, the groups $\gO^{\pm}(2n, \F_q)$ are totally orthogonal for all $n>0$ if and only if the coefficient of $z^n$, $n >0$, in the generating function
$$ \frac{1}{1 - z} \prod_{i \geq 1} \frac{1 + z/q^{2i-1}}{1 - z^2/q^{2i}}$$
is given by
$$\frac{\Sigma(\gO^{+}(2n, \F_q))}{2(q^n -1) \prod_{i=1}^{n-1} (q^{2i} - 1) }  + \frac{\Sigma(\gO^{-}(2n, \F_q))} {2(q^n + 1) \prod_{i=1}^{n-1} (q^{2i} - 1)}.$$ 
\end{itemize}
\end{proposition}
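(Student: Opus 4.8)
The plan is to deduce both parts directly from Lemma \ref{CharSumLemma} together with the two explicit involution generating functions recorded just above. For part (i), I would invoke Gow's theorem \cite[Theorem 1]{Gow85} that $\gO^{\pm}(2n,\F_q)$ is totally orthogonal when $q$ is odd; by Lemma \ref{CharSumLemma} this is equivalent to $I(\gO^{\pm}(2n,\F_q)) = \Sigma(\gO^{\pm}(2n,\F_q))$ for every $n>0$. Substituting this equality into the odd-$q$ identity derived from \cite[Theorems 2.11, 2.12, Lemma 5.1]{FGS} (after replacing $z$ by $z/q$ and summing the split and non-split generating functions) immediately shows that the coefficient of $z^n$ in $\frac{1}{1-z}\prod_{i\geq 1}\frac{(1+z/q^{2i-1})^2}{1-z^2/q^{2i}}$ is the displayed sum of normalized character-degree sums.

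For the forward direction of part (ii), the argument is identical but uses the even-$q$ generating function from \cite[Theorems 2.14, 2.15, 5.6]{FGS}: if $\gO^{\pm}(2n,\F_q)$ is totally orthogonal for all $n>0$, then $I = \Sigma$ for each group by Lemma \ref{CharSumLemma}, and substituting gives the claimed coefficient identity. For the converse, I would suppose that the coefficient of $z^n$ in $\frac{1}{1-z}\prod_{i\geq 1}\frac{1+z/q^{2i-1}}{1-z^2/q^{2i}}$ equals the sum of normalized $\Sigma$-values for all $n>0$, and compare with the fact (already recorded above) that this same coefficient equals the analogous sum with $I$ in place of $\Sigma$. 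Subtracting the two expressions yields, for every $n>0$,
\[
\frac{\Sigma(\gO^{+}(2n,\F_q)) - I(\gO^{+}(2n,\F_q))}{2(q^n-1)\prod_{i=1}^{n-1}(q^{2i}-1)} + \frac{\Sigma(\gO^{-}(2n,\F_q)) - I(\gO^{-}(2n,\F_q))}{2(q^n+1)\prod_{i=1}^{n-1}(q^{2i}-1)} = 0.
\]
The key observation is that $\Sigma(G) \geq I(G)$ for any finite group $G$: by the classical Frobenius--Schur involution formula $\sum_{\chi \in \Irr(G)} \vep(\chi)\chi(1) = I(G)$, and since $\vep(\chi) \leq 1$ we get $\Sigma(G) = \sum_{\chi} \chi(1) \geq I(G)$. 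Hence in the displayed equation both numerators are non-negative and both denominators are positive, so each term must vanish; therefore $\Sigma(\gO^{\pm}(2n,\F_q)) = I(\gO^{\pm}(2n,\F_q))$ for all $n>0$, and Lemma \ref{CharSumLemma} gives that both $\gO^{\pm}(2n,\F_q)$ are totally orthogonal.

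The only mildly delicate point — and not really an obstacle — is this decoupling step in the converse of (ii): because the two normalizing denominators $2(q^n-1)\prod(q^{2i}-1)$ and $2(q^n+1)\prod(q^{2i}-1)$ are distinct, one cannot conclude termwise vanishing from the single scalar identity alone, and must feed in the inequality $\Sigma(G) \geq I(G)$ to force each summand to be zero. Everything else is routine bookkeeping with the generating functions already in hand, so I expect the proof to be short.
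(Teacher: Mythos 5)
Your proof is correct and follows essentially the same route as the paper, which derives the proposition directly from Gow's theorem, the involution generating functions, and Lemma \ref{CharSumLemma}. The decoupling step in the converse of (ii), using $\Sigma(G) \geq I(G)$ (since $I(G) = \sum_{\chi} \vep(\chi)\chi(1) \leq \sum_{\chi}\chi(1)$) to force each of the two normalized terms to vanish separately, is exactly the detail the paper leaves implicit, and you handle it correctly.
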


It will be convenient to define $e = e(q)$ to be the following for the remainder of the paper:
\begin{equation} \label{edefn}
e = e(q) = \left\{ \begin{array}{ll} 1 & \text{if } q \text{ is even,} \\ 2 & \text{if } q \text{ is odd.} \end{array} \right.
\end{equation}

We now consider involutions in the special orthogonal groups.  By applying \cite[Theorems 6.1(1) and 6.5]{TV17}, substituting $z$ with $z/q$ and multiplying by $2$, we have
\begin{align} 
b_0^{\pm} + \sum_{n \geq 1} \frac{I(\SO^{\pm}(2n, \F_q))}{|\SO^{\pm}(2n, \F_q)|} & q^{n^2 - n} z^n  = \prod_{i \geq 1} \frac{(1 + z/q^{2i-1})^e}{1 - z^2/q^{2i-2}} \pm \prod_{i \geq 1} \frac{(1 + z/q^{2i})^e}{1 - z^2/q^{2i}}  \nonumber\\
& = \frac{1}{1 - z^2} \prod_{i \geq 1} \frac{(1 + z/q^{2i-1})^e}{1 - z^2/q^{2i}} \pm \prod_{i \geq 1} \frac{(1 + z/q^{2i})^e}{1 - z^2/q^{2i}}, \label{InvSO}
\end{align}
where $b_0^+ = 2$ and $b_0^- = 0$.  By \cite[Theorems 6.2 and 6.6]{TV17}, and again substituting $z$ with $z/q$ and multiplying by $2$, we obtain
\begin{align}
\sum_{n \geq 1} \frac{I(\gO^{\pm}(2n, \F_q) \setminus \SO^{\pm}(2n, \F_q))}{|\SO^{\pm}(2n, \F_q)|}  q^{n^2 - n} z^n & = z \prod_{i \geq 1} \frac{(1 + z/q^{2i-1})^e}{ 1 - z^2 /q^{2i-2}}  \nonumber\\
& = \frac{z}{1 - z^2} \prod_{i \geq 1} \frac{(1 + z/q^{2i-1})^e}{ 1 - z^2 /q^{2i}}. \label{InvO-SO}
\end{align}
If we take only the even terms of \eqref{InvSO} and add this to only the odd terms of \eqref{InvO-SO}, we obtain 
\begin{align*}
\frac{1}{2} &\left(\frac{1}{1 - z^2} \prod_{i \geq 1} \frac{(1 + z/q^{2i-1})^e}{1 - z^2/q^{2i}} \pm \prod_{i \geq 1} \frac{(1 + z/q^{2i})^e}{1 - z^2/q^{2i}}  \right. \\
&  + \frac{1}{1 - z^2} \prod_{i \geq 1} \frac{(1 - z/q^{2i-1})^e}{1 - z^2/q^{2i}} \pm \prod_{i \geq 1} \frac{(1 - z/q^{2i})^e}{1 - z^2/q^{2i}} \\
& \left. + \frac{z}{1 - z^2} \prod_{i \geq 1} \frac{(1 + z/q^{2i-1})^e}{ 1 - z^2 /q^{2i}} + \frac{z}{1 - z^2} \prod_{i \geq 1} \frac{(1 - z/q^{2i-1})^e}{ 1 - z^2 /q^{2i}} \right).
\end{align*}
Taking the sum of the above for $+$-type and $-$-type groups, we obtain the generating function
\begin{equation} \label{SOgoal}
\frac{1}{1-z} \prod_{i \geq 1} \frac{(1 + z/q^{2i-1})^e}{1 - z^2/q^{2i}} + \frac{1}{1-z} \prod_{i \geq 1} \frac{(1 - z/q^{2i-1})^e}{1 - z^2/q^{2i}}.
\end{equation}
That is, if we define for $n>0$
$$ J(\SO^{\pm}(2n, \F_q)) = \left\{ \begin{array}{ll} I(\SO^{\pm}(2n, \F_q) & \text{if } n \text{ is even,} \\ I(\gO^{\pm}(2n, \F_q) \setminus \SO^{\pm}(2n, \F_q)) & \text{if } n \text{ is odd,} \end{array} \right.$$
then the generating function \eqref{SOgoal} has as its coefficient of $z^n$, $n>0$, the expression
$$\frac{J(\SO^{+}(2n, \F_q))}{(q^n -1) \prod_{i=1}^{n-1} (q^{2i} - 1)}   +  \frac{J(\SO^{-}(2n, \F_q))}{ (q^n + 1) \prod_{i=1}^{n-1} (q^{2i} - 1)}.$$ 
It was proved by Gow \cite[Theorem 2]{Gow85} that when $q$ is odd, $\SO^{\pm}(4m, \F_q)$ is totally orthogonal.  It was proved by G. Taylor and this author \cite[Theorem 5.1(ii)]{TV17} that when $q$ is odd, $\eps(\chi) = 1$ for all complex irreducible characters $\chi$ of $\SO^{\pm}(4m+2, \F_q)$, where $\sigma$ is the order 2 automorphism as in Section \ref{OrthoDef}, and it follows that the character degree sum for $\SO^{\pm}(4m+2, \F_q)$ with $q$ odd is exactly $I(\gO^{\pm}(4m+2, \F_q) \setminus \SO^{\pm}(4m+2, \F_q))$.  

From the discussion in the above paragraphs, together with Lemma \ref{CharSumLemma}, we have the next result.

\begin{proposition} \label{IndicatorsSO}  The following hold:
\begin{itemize}
\item[(i)]  For $q$ odd, the coefficient of $z^n$, $n >0$, in the generating function
$$\frac{1}{1-z} \prod_{i \geq 1} \frac{(1 + z/q^{2i-1})^2}{1 - z^2/q^{2i}} + \frac{1}{1-z} \prod_{i \geq 1} \frac{(1 - z/q^{2i-1})^2}{1 - z^2/q^{2i}} $$
is given by
$$\frac{\Sigma(\SO^{+}(2n, \F_q))}{(q^n -1) \prod_{i=1}^{n-1} (q^{2i} - 1) }  + \frac{\Sigma(\SO^{-}(2n, \F_q))} {(q^n + 1) \prod_{i=1}^{n-1} (q^{2i} - 1)}.$$ 
\item[(ii)] For $q$ even, the groups $\SO^{\pm}(4m, \F_q)$ are totally orthogonal, and every irreducible character $\chi$ of $\SO^{\pm}(4m+2, \F_q)$ satisfies $\eps(\chi) = 1$, if and only if the coefficient of $z^n$, $n>0$, in the generating function
$$ \frac{1}{1-z} \prod_{i \geq 1} \frac{1 + z/q^{2i-1}}{1 - z^2/q^{2i}} + \frac{1}{1-z} \prod_{i \geq 1} \frac{1 - z/q^{2i-1}}{1 - z^2/q^{2i}} $$
is given by
$$\frac{\Sigma(\SO^{+}(2n, \F_q))}{(q^n -1) \prod_{i=1}^{n-1} (q^{2i} - 1) }  + \frac{\Sigma(\SO^{-}(2n, \F_q))} {(q^n + 1) \prod_{i=1}^{n-1} (q^{2i} - 1)}.$$ 
\end{itemize}
\end{proposition}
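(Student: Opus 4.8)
The plan is to obtain Proposition \ref{IndicatorsSO} directly from three facts already in place: that the coefficient of $z^n$, $n>0$, in \eqref{SOgoal} equals $\frac{J(\SO^+(2n,\F_q))}{(q^n-1)\prod_{i=1}^{n-1}(q^{2i}-1)} + \frac{J(\SO^-(2n,\F_q))}{(q^n+1)\prod_{i=1}^{n-1}(q^{2i}-1)}$; that each $J(\SO^{\pm}(2n,\F_q))$ is a count of (twisted) involutions; and Lemma \ref{CharSumLemma}. For the middle point, when $n$ is even we have $J(\SO^{\pm}(2n,\F_q)) = I(\SO^{\pm}(2n,\F_q)) = \#\{g \in \SO^{\pm}(2n,\F_q) : g = g^{-1}\}$ by definition, while when $n$ is odd the map $g \mapsto gh$, with $h$ the order-$2$ element of Section \ref{OrthoDef} defining $\sigma$, is a bijection from $\{g \in \SO^{\pm}(2n,\F_q) : \sigma(g) = g^{-1}\}$ onto the set of involutions of $\gO^{\pm}(2n,\F_q) \setminus \SO^{\pm}(2n,\F_q)$, since $(gh)^2 = g\,\sigma(g)$ and $h^2 = 1$; hence $J(\SO^{\pm}(2n,\F_q)) = \#\{g \in \SO^{\pm}(2n,\F_q) : \sigma(g) = g^{-1}\}$ in the odd case.

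I would first settle part (i), where $q$ is odd, so $e(q) = 2$ and the generating function displayed in (i) is exactly \eqref{SOgoal}. By Gow \cite[Theorem 2]{Gow85} the group $\SO^{\pm}(4m,\F_q)$ is totally orthogonal, so Lemma \ref{CharSumLemma} with $\iota = 1$ gives $\Sigma(\SO^{\pm}(4m,\F_q)) = J(\SO^{\pm}(4m,\F_q))$; and by \cite[Theorem 5.1(ii)]{TV17} every $\chi \in \Irr(\SO^{\pm}(4m+2,\F_q))$ satisfies $\eps(\chi)=1$, so Lemma \ref{CharSumLemma} with $\iota = \sigma$ gives $\Sigma(\SO^{\pm}(4m+2,\F_q)) = J(\SO^{\pm}(4m+2,\F_q))$. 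Substituting $\Sigma$ for $J$ in the coefficient formula above yields (i).

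For part (ii) we have $q$ even, $e(q) = 1$, and the displayed generating function is once more \eqref{SOgoal}. The ``only if'' direction is the identical substitution: the hypotheses supply, via Lemma \ref{CharSumLemma} (with $\iota = 1$ for $n$ even and $\iota = \sigma$ for $n$ odd), the equalities $\Sigma(\SO^{\pm}(2n,\F_q)) = J(\SO^{\pm}(2n,\F_q))$ for all $n>0$, hence the stated value for the coefficient of $z^n$. For the ``if'' direction I would invoke that $\epi(\chi) \in \{0,\pm 1\}$, so $\epi(\chi)\chi(1) \le \chi(1)$ for each $\chi$; together with the twisted involution formula $\sum_\chi \epi(\chi)\chi(1) = \#\{g : \iota(g)=g^{-1}\}$ this yields $\Sigma(G) \ge \#\{g : \iota(g)=g^{-1}\}$, with equality if and only if $\epi(\chi) = 1$ for all $\chi$. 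Applying this to $\SO^+(2n,\F_q)$ and to $\SO^-(2n,\F_q)$ with the relevant $\iota$, and using that the denominators $(q^n\mp 1)\prod_{i=1}^{n-1}(q^{2i}-1)$ are positive, the assumed equality of the coefficient of $z^n$ with $\frac{\Sigma(\SO^+(2n,\F_q))}{(q^n-1)\prod_{i=1}^{n-1}(q^{2i}-1)} + \frac{\Sigma(\SO^-(2n,\F_q))}{(q^n+1)\prod_{i=1}^{n-1}(q^{2i}-1)}$ forces $\Sigma(\SO^{\pm}(2n,\F_q)) = J(\SO^{\pm}(2n,\F_q))$ separately for each sign. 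By Lemma \ref{CharSumLemma} this is total orthogonality when $2n \equiv 0 \pmod 4$ and the statement $\eps(\chi)=1$ for all $\chi$ when $2n \equiv 2 \pmod 4$; letting $n$ run over all positive integers completes (ii).

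There is no genuine obstacle here: the proposition is a bookkeeping corollary of Lemma \ref{CharSumLemma} and the involution generating functions. The only points demanding attention are keeping straight which indicator ($\vep$ or $\eps$) is attached to $J(\SO^{\pm}(2n,\F_q))$ as $n$ runs over even versus odd values, handled by the coset bijection $g \mapsto gh$ converting $\sigma$-twisted involutions in $\SO^{\pm}(2n,\F_q)$ into honest involutions in the non-identity coset; and, for the reverse implication in (ii), observing that positivity of the two denominators lets one split the single equality of summed coefficients into the two separate equalities $\Sigma = J$ for the $+$-type and $-$-type groups.
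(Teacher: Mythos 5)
Your proposal is correct and follows essentially the same route as the paper: the proposition is deduced from the involution generating function \eqref{SOgoal} together with Lemma \ref{CharSumLemma}, using Gow's theorem and \cite[Theorem 5.1(ii)]{TV17} for part (i). The details you spell out explicitly (the coset bijection $g \mapsto gh$ identifying $\sigma$-twisted involutions in $\SO^{\pm}$ with involutions in $\gO^{\pm}\setminus\SO^{\pm}$, and the inequality $\Sigma \geq \#\{g : \iota(g)=g^{-1}\}$ used to split the combined coefficient identity into the two separate equalities in the ``if'' direction of (ii)) are exactly the points the paper leaves implicit, so there is no substantive difference.
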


Given Propositions \ref{IndicatorsO} and \ref{IndicatorsSO}, our main goal through the next several sections will be to compute the generating functions involving $\Sigma(\gO^{\pm}(2n, \F_q))$ and $\Sigma(\SO^{\pm}(2n, \F_q))$ directly, using the character theory of these groups.

\section{Centralizers of semisimple elements} \label{CentsSemi}

The conjugacy classes of the orthogonal groups $\gO^{\pm}(2n, \F_q)$, and the corresponding centralizers, were described by G. E. Wall.  The case that $q$ is odd is covered in \cite[Sec. 2.6 Case (C)]{Wall}, and the $q$ even case is covered in \cite[Sec. 3.7]{Wall}.  One may obtain the description of the semisimple classes which are contained in $\SO^{\pm}(2n, \F_q)$ and their centralizers in both $\gO^{\pm}(2n, \F_q)$ and $\SO^{\pm}(2n, \F_q)$ from this resource.  For the case that $q$ is odd, much of this information can also be found in \cite[Chapter 16]{CaEn04}, and some details for the $q$ even case can be found in \cite[Sec. 2.4]{FST}.

In order to describe the semsimple conjugacy classes of $\SO^{\pm}(2n, \F_q)$ or $\gO^{\pm}(2n, \F_q)$, we recall the notion of self-dual polynomials.  If $f(t) \in \F_q[t]$ is a monic polynomial of degree $d$ with nonzero constant term, $f(t) = t^d + a_{d-1} t^{d-1} + \cdots + a_1 t + a_0$, then the \emph{dual} polynomial of $f(t)$, which we denote by $f^*(t)$, is
$$ f^*(t) = a_0^{-1} t^d f(t^{-1}),$$
and $f(t)$ is \emph{self-dual} if $f(t) = f^*(t)$.  Then $f(t)$ is self-dual if and only if, for any root $\alpha \in \overline{\F}_q^{\times}$ of $f(t)$ with multiplicity $m$, $\alpha^{-1}$ is also a root of $f(t)$ with multiplicity $m$.  The elementary divisors of any element of $\gO^{\pm}(2n, \F_q)$ are all self-dual.  We let $\cN(q)$ denote the set of monic irreducible self-dual polynomials with nonzero constant in $\F_q[t]$, and we let $\cN(q)' = \cN(q) \setminus \{ t+1, t-1 \}$.  If $f(t) \in \cN(q)'$, then $\deg(f)$ is even, by \cite[Lemma 1.3.16]{FNP}.  Denote by $\cM(q)$ the set of unordered pairs $\{g, g^*\}$ of monic irreducible polynomials with nonzero constant in $\F_q[t]$ such that $g(t) \neq g^*(t)$.  

In the following description of semisimple classes, there is the sign $\pm$ corresponding to the split or non-split orthogonal or special orthogonal group, and there will often be a sign associated with $1$ or $-1$ eigenvalues.  For the rest of the paper, whenever there is a $\pm$ or $\mp$ in an expression, it will denote the difference between the cases of the split and non-split groups.  When there is a term which could be associated with elementary divisors which are powers of $t+1$ or $t-1$, we will denote the sign possibility as $+/-$.

We first describe semisimple classes $(s)$ of $G=\gO^{\pm}(2n, \F_q)$ which are contained in $H=\SO^{\pm}(2n, \F_q)$.  First assume that $q$ is odd, so that $s$ has $-1$ as an eigenvalue with even multiplicity.  Then the semisimple class $(s)$ is determined by the elementary divisors of $s$ unless $s$ has both $1$ and $-1$ as eigenvalues, in which cases there are two classes with the same elementary divisors.  In particular, the elementary divisors of $s$ must be of the form $f(t)^{m_f}$ with $f(t) \in \cN(q)'$, or $g(t)^{m_g}$ and $g^*(t)^{m_g}$ for $\{ g(t), g^*(t) \} \in \cM(q)$, or $(t+1)^{2m_+}$ or $(t-1)^{2m_-}$.  By adapting \cite[Proposition 16.8]{CaEn04}, we have that the semisimple class $(s)$ of $\gO^{\pm}(2n, \F_q)$ depends on a pair of functions 
\begin{equation} \label{SemiFunctions}
\Phi: \cN(q) \cup \cM(q) \rightarrow \Z_{\geq 0},  \quad \eta: \{ t -1, t+1 \} \rightarrow \{ +, - \}, 
\end{equation}
where we write $\Phi(f) = m_f$ for $f \in \cN(q)'$, $\Phi(\{ g, g^* \}) = m_g$ for $\{ g, g^* \} \in \cM(q)$, $\Phi(t + 1) = m_{+}$, $\eta(t+1) = \eta(+)$, $\Phi(t-1) =m_-$, and $\eta(t-1) = \eta(-)$, such that we have
\begin{equation} \label{SemiCond1}
|\Phi| := \sum_{f \in \cN'(q)} m_f \deg(f)/2 + \sum_{ \{g, g^* \} \in \cM(q) } m_g \deg(g) + m_+ + m_- = n,
\end{equation}
where
\begin{equation} \label{SemiCond2}
\eta(+/-) = + \, \text{ if } \, m_{+/-} = 0, 
\end{equation}
and such that
\begin{equation} \label{SemiCond3}
\eta(+) \eta(-) 1 \prod_{f \in \cN'(q)} (-1)^{m_f} = \tau 1,
\end{equation}
where $\tau$ is the sign $\pm$ associated to the reference orthogonal group $G = \gO^{\pm}(2n, \F_q) = \gO^{\tau}(2n, \F_q)$.  Given the semisimple class $(s)$ determined by the pair of functions $(\Phi, \eta)$, the centralizer of $s$ in $G = \gO^{\pm}(2n, \F_q)$ has isomorphism type given by
\begin{align}
C_G(s) \cong \prod_{f \in \cN(q)'} \GU(m_f, \F&_{q^{\deg(f)/2}}) \times \prod_{ \{ g, g^*\}  \in \cM(q)} \GL(m_g, \F_{q^{\deg(g)}}) \label{CentO}\\
& \times \gO^{\eta(+)} (2m_+, \F_q) \times \gO^{\eta(-)} (2m_-, \F_q), \nonumber
\end{align}
where $\GU(n, \F_q)$ denotes the full unitary group defined over $\F_q$ (or $n$-by-$n$ unitary matrices with entries from $\F_{q^2}$).

When $q$ is even, the description of semisimple classes $(s)$ of $\gO^{\pm}(2n, \F_q)$ (all of which are contained in $\SO^{\pm}(2n, \F_q)$) is very similar to the case that $q$ is odd, except $1$ and $-1$ eigenvalues are the same.  In particular, such a semisimple class is determined by a pair of functions $(\Phi, \eta)$ which satisfies \eqref{SemiFunctions} if we omit $\Phi(t-1)$, \eqref{SemiCond1} if we omit $m_-$, \eqref{SemiCond2} if we replace $\eta(+/-)$ with $\eta(+)$ and $m_{+/-}$ with $m_+$, \eqref{SemiCond3} if we omit $\eta(-)$, and with centralizer structure as in \eqref{CentO} if we omit the factor $\gO^{\eta(-)}(2m_-, \F_q)$.  When discussing the case of general $q$, we will always assume these omissions in the case that $q$ is even.

Note that in the general case, if we choose a pair of functions $(\Phi, \eta)$ which satisfies the conditions \eqref{SemiFunctions}, \eqref{SemiCond1}, and \eqref{SemiCond2}, then this specifies a unique semisimple class of exactly one type of orthogonal group $\gO^{\tau}(2n, \F_q)$, where the sign $\tau \in \{+, - \}$ is determined by the condition \eqref{SemiCond3}.

Now consider the centralizer of the semisimple element $s$ in the special orthogonal group $H = \SO^{\pm}(2n, \F_q)$.  Supposing that $q$ is odd and $s$ has both $1$ and $-1$ eigenvalues, then the structure of the centralizer $C_H(s)$ is given by (see \cite[Sec. 1B]{AuMiRo}) as
\begin{align}
C_H(s) \cong & \prod_{f \in \cN(q)'} \GU(m_f, \F_{q^{\deg(f)/2}}) \times \prod_{ \{ g, g^*\}  \in \cM(q)} \GL(m_g, \F_{q^{\deg(g)}}) \label{CentSO}\\
& \times (\SO^{\eta(+)} (2m_+, \F_q) \times \SO^{\eta(-)} (2m_-, \F_q)) \rtimes \langle \sigma_s \rangle, \nonumber
\end{align}
where $\sigma_s$ acts as the outer automorphism $\sigma$ simultaneously on the two factors.  In the case that there are either no $1$ eigenvalues or no $-1$ eigenvalues, then there is only a single factor $\SO^{\eta(+/-)}(2m_{+/-}, \F_q)$, and no $\langle \sigma_s \rangle$ factor, and when there are no $1$ nor $-1$ eigenvalues, there are only general linear and unitary factors.  In the case $q$ is even, we only have the factor $\SO^{\eta(+)}(2m_+, \F_q)$ if there are $1$ eigenvalues.

Note that in all cases described above, if $s \in \SO^{\pm}(2n, \F_q) = H$, then we have
$$ [C_G(s) : C_H(s)] = \left\{ \begin{array}{ll} 2 & \text{if } s \text{ has any } 1 \text{ or } -1 \text{ eigenvalues,} \\ 1 & \text{otherwise.} \end{array} \right.$$
This implies that, given a semisimple class $(s)$ of $\gO^{\pm}(2n, \F_q)$ such that $s \in \SO^{\pm}(2n, \F_q)$ described by the pair of functions $(\Phi, \eta)$ as in \eqref{SemiFunctions}, this class splits into two classes of $\SO^{\pm}(2n, \F_q)$ if and only if $s$ has no $1$ nor $-1$ eigenvalues.  In the case that the class splits, then we have $s$ and $\sigma(s)$ are not conjugate in $\SO^{\pm}(2n, \F_q)$, where $\sigma(s) = hsh^{-1}$ with $h \in G \setminus H$.  

\section{Unipotent characters} \label{Unipotent}

Let $\bG$ be any reductive group over $\overline{\F}_q$ (connected or disconnected), defined over $\F_q$ with Frobenius or Steinberg morphism $F$, and let $G = \bG^F$.

First assume that $\bG$ is connected.  Let $\bT$ be a maximal $F$-stable torus of $\bG$, with $T = \bT^F$, and let $\theta$ be a linear complex character of $T$.  Let $R_T^G(\theta)$ denote the Deligne-Lusztig virtual character of $G$ corresponding to $(T, \theta)$, first defined in \cite{DeLu}.  A \emph{unipotent character} of $G$ is any irreducible character which appears as a constituent in the virtual character $R_T^G({\bf 1})$, where ${\bf 1}$ denotes the trivial character.

Consider now the case that $\bG$ is disconnected, with connected component $\bG^{\circ}$.  A \emph{unipotent character} of the group $\bG^F$ is defined to be any irreducible constituent of an induced character $\Ind_{(\bG^{\circ})^F}^{\bG^F} (\psi)$, where $\psi$ is a unipotent character of $(\bG^{\circ})^F$ as in the previous paragraph.

\subsection{Unipotent characters for special orthogonal groups} \label{UnipotentSO}

We now discuss the unipotent characters of $\SO^{\pm}(2n, \F_q)$, which are parameterized by \emph{symbols}, introduced in \cite{Lu77}.  The description of unipotent characters of $\SO^{\pm}(2n, \F_q)$ and their degrees for all $q$ is given in detail in \cite{Lu77, Lu81, As83}.  Following \cite[Section 13.8]{Ca85}, a symbol in this case is an unordered pair of finite sets of non-negative integers, $\Lambda = [\mu, \nu]$, with $\mu = (\mu_1 < \mu_2 < \cdots < \mu_r)$, $\nu = (\nu_1 < \nu_2 < \cdots < \nu_k)$, such that $\mu_1$ and $\nu_1$ are not both $0$,  $r \geq k$, and in the case $\mu = \nu$ there are two distinct symbols $\Lambda$ and $\Lambda'$ both corresponding to $[\mu, \nu]$.  The number $r-k$ is the \emph{defect} of the symbol $\Lambda$, and the \emph{rank} of the symbol $\Lambda$, which we denote by $|\Lambda|$, is defined as the non-negative integer
\begin{equation} \label{Rank}
 |\Lambda| = \sum_{i=1}^r \mu_i + \sum_{i=1}^k \nu_i - \left \lfloor \left( \frac{r+k-1}{2} \right)^2 \right \rfloor.
\end{equation}
The two symbols $\Lambda$ and $\Lambda'$ corresponding to the pair $[\mu, \nu]$ with $\mu = \nu$ (and so of defect 0) are called \emph{degenerate} symbols, while all other symbols are called \emph{non-degenerate}.  The set of all degenerate symbols (all of which have even rank and defect $0$), will be denoted by $\cG$, and $\cR$ will denote the set of all non-degenerate symbols of even defect.  We take the symbols $[\varnothing, \varnothing], [\varnothing, \varnothing]' \in \cG$ to be the only symbols of rank $0$ and even defect.

The unipotent characters of $\SO^+(2n, \F_q)$ are then parameterized by symbols of rank $n$ and with defect which is$0$ mod $4$, while unipotent characters of $\SO^-(2n, \F_q)$ are parameterized by symbols of rank $n$ and with defect which is $2$ mod $4$.  We let $\cS^+$ denote the set of all symbols of non-negative rank with defect divisible by $4$ (note $\cG \subset \cS^+$), while $\cS^-$ will denote the set of all symbols with defect congruent to $2$ mod $4$.  We define $\cS = \cS^+ \cup \cS^- = \cR \cup \cG$ to be the set of all symbols with even defect and non-negative rank.  Given $\Lambda \in \cS$ with $|\Lambda| = n > 0$, we denote by $\psi_{\Lambda}$ the unipotent character of $\SO^{\pm}(n, \F_q)$ which corresponds to the symbol $\Lambda$.

Given $\Lambda \in \cS^+$ with $|\Lambda|=n>0$, the degree of $\psi_{\Lambda}$ is given by
$$\psi_{\Lambda}(1) = (q^n - 1) \prod_{i=1}^{n-1} (q^{2i}-1) \cdot \delta(\Lambda),$$
where 
$$\delta(\Lambda) = \frac{ \prod_{i < j} (q^{\mu_i} - q^{\mu_j}) \prod_{i<j} (q^{\nu_i} - q^{\nu_j}) \prod_{i, j} (q^{\mu_i} + q^{\nu_j})}{2^{d^+(\Lambda)} q^{c(\Lambda)} \prod_{i = 1}^{r} \prod_{j = 1}^{\mu_i} (q^{2j} - 1) \prod_{i = 1}^{k} \prod_{j=1}^{\nu_i} (q^{2j} - 1)},$$
with $c(\Lambda) = \sum_{i=1}^{(r+k-2)/2} \binom{r+k-2i}{2}$, and
$$ d^+(\Lambda) = \left\{ \begin{array}{ll} (r + k - 2)/2  & \text{ if } \mu \neq \nu, \\ r = k & \text{ if } \mu = \nu. \end{array} \right. .$$
For $\Lambda \in \cS^-$ with $|\Lambda|=n >0$, the degree of $\psi_{\Lambda}$ is
$$ \psi_{\Lambda}(1) = (q^n + 1) \prod_{i=1}^{n-1} (q^{2i}-1) \cdot \delta(\Lambda),$$
where
$$\delta(\Lambda) = \frac{ \prod_{i < j} (q^{\mu_i} - q^{\mu_j}) \prod_{i<j} (q^{\nu_i} - q^{\nu_j}) \prod_{i, j} (q^{\mu_i} + q^{\nu_j})}{2^{(r+k-2)/2} q^{c(\Lambda)} \prod_{i = 1}^{r} \prod_{j = 1}^{\mu_i} (q^{2j} - 1) \prod_{i = 1}^{k} \prod_{j=1}^{\nu_i} (q^{2j} - 1)}.$$
For $\Lambda= [\varnothing, \varnothing]$ or $\Lambda=[\varnothing, \varnothing]'$, we define $\delta(\Lambda)=1$.

\subsection{Unipotent characters for orthogonal groups} \label{UnipotentO}

To describe unipotent characters of $\gO^{\pm}(2n, \F_q)$, we must describe the irreducible constituents of the induced characters $\Ind_{\SO^{\pm}(2n, \F_q)}^{\gO^{\pm}(2n, \F_q)} (\psi_{\Lambda})$ for unipotent characters $\psi_{\Lambda}$ of $\SO^{\pm}(2n, \F_q)$ just described.  Since $\gO^{\pm}(2n, \F_q) \cong \SO^{\pm}(2n, \F_q) \rtimes \langle \sigma \rangle$, this boils down to describing which unipotent characters of $\SO^{\pm}(2n, \F_q)$ are invariant under the action of $\sigma$.  The action of $\sigma$ on unipotent characters of $\SO^{\pm}(2n, \F_q)$ is given in \cite[Proposition 3.7(a)]{Ma07}.  In particular, if $\Lambda \in \cS$, then $\psi_{\Lambda}$ is invariant under the action of $\sigma$ unless $\Lambda$ is degenerate.  If $\Lambda$ is degenerate, with $\Lambda'$ the other symbol corresponding to the pair $[\mu, \nu]$ with $\mu = \nu$, then $\sigma.\psi_{\Lambda} = \psi_{\Lambda'}$.

Thus in the case $H = \SO^+(2n, \F_q)$ with $G = \gO^+(2n, \F_q)$, if $\Lambda, \Lambda' \in \cG$ are two degenerate symbols of rank $n$ corresponding to the same pair $[\mu, \nu]$ with $\mu=\nu$, then $\Ind_H^G(\psi_{\Lambda}) = \Ind_H^G(\psi_{\Lambda'})$ are the same irreducible unipotent character of $\gO^+(2n, \F_q)$, with degree twice that of $\psi_{\Lambda}$.  For all non-degenerate $\Lambda \in \cR$, with $H = \SO^{\pm}(2n, \F_q)$ and $G = \gO^{\pm}(2n, \F_q)$, we have $\Ind_H^G(\psi_{\Lambda})$ is the direct sum of two distinct irreducible extensions of $\psi_{\Lambda}$ from $H$ to $G$, each with the same degree as $\psi_{\Lambda}$.

We now define \emph{orthogonal symbols} as ordered pairs $\Xi = (\mu, \nu)$, with $\mu = (\mu_1 < \mu_2 < \cdots < \mu_r)$, $\nu = (\nu_1 < \nu_2 < \cdots < \nu_k)$, and such that $\mu_1$ and $\nu_1$ are not both $0$.  Define the defect of $\Xi = (\mu, \nu)$ as $|r-k|$, and the rank of an orthogonal symbol is still defined by \eqref{Rank}.  We let $\cO^+$ denote the orthogonal symbols with non-negative rank and defect divisible by $4$, $\cO^-$ the orthogonal symbols with defect congruent to $2$ mod $4$, and $\cO = \cO^+ \cup \cO^-$ the set of all orthogonal symbols of non-negative rank and even defect.  We have $(\varnothing, \varnothing)$ is the only orthogonal symbol of rank $0$ and even defect.  By the discussion of the previous paragraphs, the unipotent characters of $\gO^+(2n, \F_q)$ are parameterized by the orthogonal symbols in $\cO^+$ of rank $n$, and the unipotent characters of $\gO^-(2n, \F_q)$ are parameterized by those of rank $n$ in $\cO^-$.  For any $\Xi \in \cO$ of positive rank, we let $\psi_{\Xi}$ denote the corresponding unipotent character.  

When $\mu \neq \nu$, the two orthogonal symbols $(\mu, \nu)$ and $(\nu, \mu)$ correspond to the two distinct irreducible extensions of $\psi_{\Lambda}$ to the orthogonal group, where $\Lambda = [\mu, \nu] \in \cR$.  In this case, we say that $\Xi = (\mu, \nu)$ is a non-degenerate orthogonal symbol, and we write $[\Xi] = \Lambda = [\mu, \nu]$.   When $\mu = \nu \neq \varnothing$, the orthogonal symbol $(\mu, \nu)$ corresponds to the irreducible character obtained by inducing either $\psi_{\Lambda}$ or $\psi_{\Lambda'}$ where $\Lambda, \Lambda' \in \cG$ are the two degenerate symbols corresponding to $[\mu, \nu]$.  We will call $\Xi = (\mu, \nu)$ with $\mu = \nu$ a degenerate orthogonal symbol, and we write $[\Xi]$ and $[\Xi]'$ for the two degenerate symbols in $\cS^+$ corresponing to it.  We will denote by $\cR^{\sa}$ and $\cG^{\sa}$ the sets of orthogonal non-degenerate and orthogonal degenerate symbols of non-negative rank and even defect, respectively.

Given any non-degenerate orthogonal symbol $\Xi \in \cR^{\sa}$ of rank $n$, 
it follows from the above that the degree of the corresponding unipotent character of $\gO^{\pm}(2n, \F_q)$ has degree
$$ \psi_{\Xi}(1) = \psi_{[\Xi]}(1) = (q^n \mp 1) \prod_{i = 1}^{n-1} (q^{2i} - 1) \cdot \delta([\Xi]).$$
If $\Xi \in \cG^{\sa}$ is a degenerate orthogonal symbol, and $\Lambda = [\Xi]$ is one of the degenerate symbols corresponding to it, then the unipotent character $\psi_{\Xi}$ of $\gO^{+}(2n, \F_q)$ has degree
$$ \psi_{\Xi}(1) = 2 \psi_{\Lambda}(1) = (q^n - 1) \prod_{i = 1}^{n-1} (q^{2i} - 1) \cdot 2\delta(\Lambda).$$

That is, if $\Xi = (\mu, \nu) \in \cO$ is any orthogonal symbol of positive rank $n$ with $\Lambda = [\Xi] \in \cS$, then we define
$$ \delta(\Xi) = \left\{ \begin{array}{ll} \delta(\Lambda) & \text{if } \mu \neq \nu, \\
2 \delta(\Lambda) & \text{if } \mu = \nu \end{array} \right.,$$
so that we have
$$ \psi_{\Xi}(1) = (q^n \mp 1) \prod_{i = 1}^{n-1} (q^{2i} - 1) \cdot \delta(\Xi),$$
with 
$$ \delta(\Xi) = \frac{ \prod_{i < j} (q^{\mu_i} - q^{\mu_j}) \prod_{i<j} (q^{\nu_i} - q^{\nu_j}) \prod_{i, j} (q^{\mu_i} + q^{\nu_j})}{2^{(r+k-2)/2} q^{c(\Lambda)} \prod_{i = 1}^{r} \prod_{j = 1}^{\mu_i} (q^{2j} - 1) \prod_{i = 1}^{k} \prod_{j=1}^{\nu_i} (q^{2j} - 1)},$$
since in the case $\mu = \nu$ we have $d^+(\Lambda) - 1 = r - 1 = (r+k-2)/2$.  In the case $\Xi = (\varnothing, \varnothing)$, we define $\delta(\Xi) = 2$.

\subsection{Unipotent characters for centralizers of semisimple elements} \label{UnipotentCent}

We next consider the unipotent characters of the centralizers in $\gO^{\pm}(2n, \F_q)$ and $\SO^{\pm}(2n, \F_q)$ of semisimple classes $(s)$ which are contained in $\SO^{\pm}(2n, \F_q)$, as described in \eqref{CentO}.

Let $\bG = \gO_{2n}$, $G = \gO^{\pm}(2n, \F_q)$, $\bH = \SO_{2n}$, and $H= \SO^{\pm}(2n, \F_q)$.  By a slight abuse of notation, we let $F$ denote either the standard Frobenius map or the twisted Frobenius $\tF$, depending on the type $\pm$ of the orthogonal group.  If $s \in H$ is a semisimple element with conjugacy class specified by the pair $(\Phi, \eta)$ in \eqref{SemiFunctions}, then we have 
\begin{align}
(C_{\bG}(s)^{\circ})^F & = (C_{\bH}(s)^{\circ})^F  \nonumber \\ & \cong \prod_{f \in \cN(q)'} \GU(m_f, \F_{q^{\deg(f)/2}}) \times \prod_{ \{ g, g^*\}  \in \cM(q)} \GL(m_g, \F_{q^{\deg(g)}}) \label{ConnCent}\\
&  \quad \times \SO^{\eta(+)} (2m_+, \F_q) \times \SO^{\eta(-)} (2m_-, \F_q). \nonumber
\end{align}

Recall that the unipotent characters of $\GL(n, \F_q)$ and $\GU(n, \F_q)$ are each parameterized by the set of partitions of $n$, denoted by $\cP_n$.  Given a partition $\lambda \in \cP_n$ (where we also write $|\lambda|=n$), we let $\psi_{\GL, \lambda}$ and $\psi_{\GU, \lambda}$ denote the unipotent characters of $\GL(n, \F_q)$ and $\GU(n, \F_q)$, respectively, parameterized by $\lambda$.  A unipotent character $\psi^{\circ}$ of $(C_{\bG}(s)^{\circ})^F = (C_{\bH}(s)^{\circ})^F$ thus has structure given by
\begin{equation} \label{ConnUni}
\psi^{\circ} = \bigotimes_{f \in \cN'(q)}  \psi_{\GU, \lambda_{f}} \otimes \bigotimes_{\{ g, g^*\} \in \cM(q) } \psi_{\GL, \lambda_{g}} \otimes \psi_{\Lambda_+} \otimes \psi_{\Lambda_-},
\end{equation}
where $\psi_{\GU, \lambda_f}$ is a unipotent character of $\GU(m_f, \F_{q^{\deg(f)/2}})$ with $|\lambda_f| = m_f$, $\psi_{\GL, \lambda_g}$ is a unipotent character of $\GL(m_g, \F_{q^{\deg(g)}})$ with $|\lambda_g|=m_g$, and $\psi_{\Lambda_{+/-}}$ is a unipotent character of $\SO^{\eta(m_{+/-})}(2m_{+/-}, \F_q)$ with $|\Lambda_{+/-}| = m_{+/-}$ (but there is no $m_-$ or $\Lambda_-$ when $q$ is even).

To describe the unipotent characters of $C_{G}(s)$, we must describe the irreducible constituents of $\Ind_{(C_{\bG}(s)^{\circ})^F}^{C_{G}(s)} (\psi^{\circ})$.  Note that we have
\begin{align*}
\Ind_{(C_{\bG}(s)^{\circ})^F}^{C_{G}(s)} (\psi^{\circ}) & = \bigotimes_{f \in \cN'(q)}  \psi_{\GU, \lambda_{f}} \otimes \bigotimes_{\{ g, g^*\} \in \cM(q) } \psi_{\GL, \lambda_{g}} \\
& \otimes \Ind_{\SO^{\eta(+)} (2m_+, \F_q)}^{\gO^{\eta(+)} (2m_+, \F_q)}(\psi_{\Lambda_+}) \otimes \Ind_{\SO^{\eta(-)} (2m_-, \F_q)}^{\gO^{\eta(-)} (2m_-, \F_q)}(\psi_{\Lambda_-}).
\end{align*}
It follows from Section \ref{UnipotentO} that a unipotent character of $C_G(s)$ has general structure given by
\begin{equation} \label{UniCentO}
\psi = \bigotimes_{f \in \cN'(q)}  \psi_{\GU, \lambda_{f}} \otimes \bigotimes_{\{ g, g^*\} \in \cM(q) } \psi_{\GL, \lambda_{g}} \otimes \psi_{\Xi_+} \otimes \psi_{\Xi_-},
\end{equation}
where $\psi_{\Xi_{+/-}}$ is a unipotent character of $\gO^{\eta(+/-)}(2m_{+/-}, \F_q)$ with $|\Xi_{+/-}| = m_{+/-}$.

For the unipotent characters of $C_H(s)$, we must describe the irreducible consituents of $\Ind_{(C_{\bH}(s)^{\circ})^F}^{C_{H}(s)} (\psi^{\circ})$.  Given the structure of $C_H(s)$ in \eqref{CentSO}, assuming $s$ has both $1$ and $-1$ eigenvalues, we have  
\begin{align*}
\Ind_{(C_{\bH}(s)^{\circ})^F}^{C_{H}(s)} (\psi^{\circ}) & = \bigotimes_{f \in \cN'(q)}  \psi_{\GU, \lambda_{f}} \otimes \bigotimes_{\{ g, g^*\} \in \cM(q) } \psi_{\GL, \lambda_{g}} \\
& \otimes \Ind_{\SO^{\eta(+)} (2m_+, \F_q) \times \SO^{\eta(-)} (2m_-,\F_q)}^{(\SO^{\eta(+)} (2m_+, \F_q) \times \SO^{\eta(-)} (2m_-,\F_q)) \rtimes \langle \sigma_s \rangle}(\psi_{\Lambda_+} \otimes \psi_{\Lambda_-}).
\end{align*}
Because $\sigma_s$ acts as $\sigma$ simultaneously on both of the factors $\SO^{\eta(+)}(2m_+, \F_q)$ and $\SO^{\eta(-)}(2m_-, \F_q)$, we have $\psi_{\Lambda_+} \otimes \psi_{\Lambda_-}$ is invariant under $\sigma_s$ precisely when both $\Lambda_+$ and $\Lambda_-$ are non-degenerate symbols (from Section \ref{UnipotentO}).  That is, we have the following two cases:
\begin{itemize}
\item If $\Lambda_+, \Lambda_- \in \cR$, then $\psi_{\Lambda_+} \otimes \psi_{\Lambda_-}$ induces to a direct sum of two distinct characters, each of which has the same degree as $\psi_{\Lambda_+} \otimes \psi_{\Lambda_-}$.  We label these two induced characters as $\psi_{[\Lambda_+, \Lambda_-],i}$ with $i = 1, 2$.
\item If $\Lambda_+ \in \cG$ or $\Lambda_- \in \cG$, then $\psi_{\Lambda_+} \otimes \psi_{\Lambda_-}$ (and $\sigma.\psi_{\Lambda_+} \otimes \sigma.\psi_{\Lambda_-}$) induces to an irreducible character with degree twice that of $\psi_{\Lambda_+} \otimes \psi_{\Lambda_-}$.  We label this induced character as $\psi_{[\Lambda_+, \Lambda_-],0}$.
\end{itemize}
In the above, we let $[\Lambda_+, \Lambda_-]$ denote the orbit of $\sigma$ acting on both characters of the ordered pair $(\Lambda_+, \Lambda_-)$, so in the first case this orbit contains just this ordered pair, while in the second the orbit contains two.

In summary, the unipotent characters of $C_H(s)$, when $s$ has both $1$ and $-1$ eigenvalues, are all of the form
\begin{equation} \label{UniCentSO}
\omega = \bigotimes_{f \in \cN'(q)}  \psi_{\GU, \lambda_{f}} \otimes \bigotimes_{\{ g, g^*\} \in \cM(q) } \psi_{\GL, \lambda_{g}} \otimes \psi_{[\Lambda_+, \Lambda_-],i},
\end{equation}
where either $\Lambda_+ \in \cG$ or $\Lambda_- \in \cG$ with $i=0$, or $\Lambda_+, \Lambda_- \in \cR$ with $i = 1$ or $2$.  In the case $s$ does not have both $1$ and $-1$ eigenvalues (including the case that $q$ is even), then $(C_{\bH}(s)^{\circ})^F = C_H(s)$, and the unipotent characters of $C_H(s)$ are all of the form
\begin{equation} \label{UniCentSO2}
\omega = \bigotimes_{f \in \cN'(q)}  \psi_{\GU, \lambda_{f}} \otimes \bigotimes_{\{ g, g^*\} \in \cM(q) } \psi_{\GL, \lambda_{g}} \otimes \psi_{\Lambda_{+/-}}.
\end{equation}

\section{Jordan decomposition of characters for orthogonal groups} \label{JordanD}

Let $\bG$ be a connected reductive group over $\F_q$ with Frobenius morphism $F$ and $G = \bG^F$, and $\bG^*$ a dual group with dual Frobenius $F^*$ with $G^* = \bG^{*F^*}$.  Given an $F^*$-stable semisimple class $(s)$ of $\bG^*$, one may associate a conjugacy class of pairs $(\bT, \theta)$, where $\bT$ is a maximal $F$-stable torus of $\bG$ and $\theta$ is a multiplicative complex character of $\bT^F$ (see \cite[Proposition 13.12]{DiMi}).  The \emph{rational Lusztig series} of $\bG^F$ associated to a $G^*$-conjugacy class $(s)$ of a semisimple element $s \in G^*$ is the set of all irreducible characters $\chi$ of $G$ which appear as a constituent of a Deligne-Lusztig character $R_T^G(\theta)$, where $T = \bT^F$, and the $G^*$-conjugacy class $(s)$ is associated with the class of the pair $(\bT, \theta)$ as referenced above.  The rational Lusztig series corresponding to the $G^*$-semisimple class $(s)$ will be denoted by $\cE(G, (s))$.

A \emph{Jordan decomposition map} for the rational Lusztig series $\cE(G, (s))$ is a bijection
\begin{equation} \label{Jord}
J_s: \cE(G, (s)) \longrightarrow \cE(C_{\bG^*}(s)^{F^*}, 1),
\end{equation}
(after choosing some $s$ from the class $(s)$) such that, for any $\chi \in \cE(G, (s))$, if $\chi$ is a constituent of $R_T^G(\theta)$ and $\bT^*$ is the $F^*$-stable maximal torus of $\bG^*$ in duality with $\bT$, then
\begin{equation} \label{InnerProp}
\langle \chi, R_T^G(\theta) \rangle = \pm \langle J_s (\chi), R_{\bT^{*F^*}}^{C_{\bG^*}(s)^{F^*}}({\bf 1}) \rangle.
\end{equation}
Lusztig \cite{Lu84} proved the existence of a Jordan decomposition map whenever the center $Z(\bG)$ is connected, and the existence when the center is disconnected was proved by both Lusztig \cite{Lu88} and Digne and Michel \cite{DiMi90}.  We note that in the case $Z(\bG)$ is disconnected, then there will be disconnected centralizers $C_{\bG^*}(s)$, in which case the notion of unipotent character given at the beginning of Section \ref{Unipotent} is used, and we define
$$ R_{\bT^{*F^*}}^{C_{\bG^*}(s)^{F^*}}({\bf 1}) = \Ind_{(C_{\bG^*}(s)^{\circ})^{F^*}}^{C_{\bG^*}(s)^{F^*}} \left( R_{\bT^{*F^*}}^{(C_{\bG^*}(s)^{\circ})^{F^*}}({\bf 1})\right).$$

In the general case, the Jordan decomposition map also has the following property regarding character degrees (see \cite[Remark 13.24]{DiMi}).  If $J_s(\chi) = \psi$, then
\begin{equation} \label{CharDegProp}
\chi(1) = [G^*: C_{G^*}(s)]_{p'} \psi(1),
\end{equation}
where $p = \mathrm{char}(\F_q)$ and $[G^*: C_{G^*}(s)]_{p'}$ denotes the prime-to-$p$ part.

Now consider the case that $\bH = \SO_{2n}$ with Frobenius morphism given by either $F$ or $\tF$ as in Section \ref{OrthoDef}, so that $H = \SO^{\pm}(2n, \F_q)$.  In this case, we happen to have $\bH \cong \bH^*$ and $H \cong H^*$, and so we will identify $\bH$ with $\bH^*$ and $H$ with $H^*$.  We need an additional property of the Jordan decomposition in this case with respect to the action of the automorphism $\sigma$.  In particular, if $\chi \in \cE(H, (s))$, then $\sigma.\chi \in \cE(G, (\sigma(s)))$, which follows from \cite[Corollary 2.4]{NaTiTu08} or \cite[Proposition 7.2]{Tay16}.  It turns out it is very useful to choose a Jordan decomposition map which is equivariant with respect to this action of $\sigma$.  The following was first claimed in the last line of \cite[Proof of Proposition 1.7]{AuMiRo}.  In the case that $q$ is even when the center of $\SO_{2n}$ is connected, the statement follows from a result of Cabanes and Sp\"ath \cite[Theorem 3.1]{CaSp13}, and in the case $q$ is odd a proof will appear in a paper of A. Schaeffer Fry and J. Taylor \cite{SFT18}.

\begin{proposition} \label{SigmaEqui}
When $H = \SO^{\pm}(2n, \F_q) = H^*$, for any semisimple class $(s)$ of $H^*$, there exists a Jordan decomposition map (after choosing a representative $s$ of $(s)$), $J_s: \cE(H, (s)) \longrightarrow \cE(C_{H^*}(s), 1)$, such that for any $\rho \in \cE(H, (s))$ with $J_s(\rho) = \omega$, we have $J_{\sigma(s)}(\sigma.\rho) = \sigma.\omega$.
\end{proposition}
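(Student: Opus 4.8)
The goal is an equivariance statement for a Jordan decomposition map under the order-2 automorphism $\sigma$. The natural strategy is to transport a known Jordan decomposition map along $\sigma$ and verify that the result is again a legitimate Jordan decomposition map, then invoke a uniqueness (or near-uniqueness) property to identify the two. First I would fix a semisimple class $(s)$ of $H^* = H = \SO^{\pm}(2n,\F_q)$ and choose a representative $s$, and let $J_s \colon \cE(H,(s)) \to \cE(C_{H^*}(s),1)$ be any Jordan decomposition map whose existence is guaranteed by Lusztig \cite{Lu84, Lu88} and Digne--Michel \cite{DiMi90} (noting $Z(\bH)$ is connected when $q$ is even and of order $2$ when $q$ is odd, so the disconnected-center version may be needed). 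Since $\sigma$ is an automorphism of $\bH$ commuting with $F$ (it is induced by conjugation by $h$, which normalizes $\SO_{2n}$ and is $F$-fixed), it carries the pair $(\bT,\theta)$ to $({}^h\bT, {}^h\theta)$, sends $R_T^H(\theta)$ to $R_{\sigma(T)}^H(\sigma.\theta)$, and hence restricts to a bijection $\cE(H,(s)) \to \cE(H,(\sigma(s)))$; dually it gives an isomorphism $C_{H^*}(s) \xrightarrow{\sim} C_{H^*}(\sigma(s))$ (namely conjugation by $h$) carrying unipotent characters to unipotent characters, so $\cE(C_{H^*}(s),1) \to \cE(C_{H^*}(\sigma(s)),1)$.

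Next I would form the composite
\[
J'_{\sigma(s)} \;=\; \sigma \circ J_s \circ \sigma^{-1} \colon \cE(H,(\sigma(s))) \longrightarrow \cE(C_{H^*}(\sigma(s)),1),
\]
and check that $J'_{\sigma(s)}$ satisfies the defining property \eqref{InnerProp} of a Jordan decomposition map for $\cE(H,(\sigma(s)))$. This is essentially a formal computation: for $\chi \in \cE(H,(\sigma(s)))$, write $\chi = \sigma.\chi_0$ with $\chi_0 \in \cE(H,(s))$; if $\chi_0$ is a constituent of $R_T^H(\theta)$ then $\chi$ is a constituent of $R_{\sigma(T)}^H(\sigma.\theta)$ with the same multiplicity (since $\sigma$ is an isometry on class functions and permutes Deligne--Lusztig characters), and on the dual side $\sigma$ carries $R_{\bT^{*F^*}}^{C_{\bH^*}(s)^{F^*}}(\mathbf 1)$ to $R_{\sigma(\bT^*)^{F^*}}^{C_{\bH^*}(\sigma(s))^{F^*}}(\mathbf 1)$ (compatibly with the induction convention when centralizers are disconnected). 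So the $\pm$ relation \eqref{InnerProp} for $J_s$ transports verbatim to $J'_{\sigma(s)}$. Hence $J'_{\sigma(s)}$ is a valid Jordan decomposition map for the class $(\sigma(s))$.

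Now the subtle point: a Jordan decomposition map is not literally unique, so one cannot simply assert $J'_{\sigma(s)} = J_{\sigma(s)}$ for an arbitrarily pre-chosen $J_{\sigma(s)}$. The resolution is to \emph{define} the family $\{J_s\}_{(s)}$ by choosing it freely on one representative in each $\sigma$-orbit of semisimple classes and then \emph{setting} $J_{\sigma(s)} := \sigma \circ J_s \circ \sigma^{-1}$ on the other representative when $(s) \ne (\sigma(s))$ (and requiring, when $(s) = (\sigma(s))$, that the chosen $J_s$ be $\sigma$-equivariant, which is possible by the cited results of Cabanes--Sp\"ath \cite{CaSp13} for $q$ even and Schaeffer Fry--Taylor \cite{SFT18} for $q$ odd, or more simply by averaging over the order-2 action when the two possible maps differ by a central character, using that $\sigma$ acts trivially on that ambiguity). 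With this construction, the identity $J_{\sigma(s)}(\sigma.\rho) = \sigma.\omega$ holds by fiat on orbits of size two, and by the equivariance of the chosen map on fixed classes.

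\emph{Main obstacle.} The genuine difficulty is exactly the case $(s) = (\sigma(s))$: here one needs an \emph{actual} $\sigma$-equivariant Jordan decomposition map on that single series, and this cannot be arranged by the transport trick alone. This is where the external input is essential — the theorem of Cabanes--Sp\"ath on equivariance of Jordan decomposition (applicable when $Z(\bH)$ is connected, i.e.\ $q$ even) and the forthcoming work of Schaeffer Fry and Taylor (for $q$ odd, where $Z(\bH)$ is disconnected and one must control the $\sigma$-action on the extra data indexing the disconnected-center Jordan decomposition). Once equivariance is granted on $\sigma$-stable series, the rest of the argument is the bookkeeping sketched above: organizing classes into $\sigma$-orbits, propagating the chosen map, and checking \eqref{InnerProp} is preserved under conjugation by $h$. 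I would therefore present the proof as: (1) reduce to $\sigma$-stable series by the orbit construction; (2) on $\sigma$-stable series, cite \cite{CaSp13} and \cite{SFT18}; (3) verify the transported map satisfies \eqref{InnerProp}, completing the definition of a globally $\sigma$-equivariant family.
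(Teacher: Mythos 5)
Your proposal is essentially sound, but it is worth saying plainly how it sits relative to the paper: the paper gives \emph{no} internal proof of Proposition \ref{SigmaEqui} at all. It records that the statement was first claimed in the last line of the proof of Proposition 1.7 of \cite{AuMiRo}, and then attributes the actual content to external results: for $q$ even (where $Z(\bH)$ is connected) to \cite[Theorem 3.1]{CaSp13}, and for $q$ odd to the forthcoming work \cite{SFT18}. Your write-up correctly isolates that this external input is exactly where the real difficulty lives, namely the $\sigma$-stable series, and everything else you do --- transporting $J_s$ by $\sigma$ to define $J_{\sigma(s)} := \sigma \circ J_s \circ \sigma^{-1}$ on orbits of size two, checking that the transported map still satisfies \eqref{InnerProp} because $\sigma$ commutes with the (twisted) Frobenius and permutes Deligne--Lusztig characters on both sides of the duality, and noting $\sigma^2=1$ makes the orbit construction consistent --- is correct bookkeeping that the paper leaves implicit (the cited equivariance theorems in fact cover all series at once, so your reduction is not strictly needed, but it is harmless and clarifies the logical structure; note also that on $\sigma$-stable classes one should fix a representative with $\sigma(s)=s$, as the paper does later in the proof of Proposition \ref{JordanO}). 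The one piece you should delete is the parenthetical suggestion that equivariance on $\sigma$-stable series could ``more simply'' be arranged by averaging over the order-2 action because the ambiguity is ``a central character'': the non-uniqueness of Jordan decomposition maps (especially in the disconnected-center case $q$ odd) is not of that form, averaging has no meaning for bijections, and no such shortcut is available --- which is precisely why the paper must lean on \cite{CaSp13} and \cite{SFT18}.
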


Aubert, Michel, and Rouquier \cite[Proposition 1.7]{AuMiRo} used Proposition \ref{SigmaEqui} to build a Jordan decomposition map for the groups $G = \gO^{\pm}(2n, \F_q)$, where now $\bG = \gO_{2n}$ is a disconnected group.  We give some details of this map from \cite{AuMiRo} below, and we will also prove a character degree property of this Jordan decomposition map which we need.  We take $G^* = \gO^{\pm}(2n, \F_q) = G$ and $H = \SO^{\pm}(2n, \F_q) = H^*$ as above.  Because of \eqref{OSOindex2}, any irreducible character $\chi$ of $G$ is obtained by inducing an irreducible character from $H$, where if $\rho$ is an irreducible of $H$ and $\sigma.\rho = \rho$, then $\Ind_H^G(\rho) = \chi_1 + \chi_2$ is the sum of two distinct irreducible characters of $G$, while if $\sigma.\rho \neq \rho$, then $\Ind_H^G(\rho) = \chi = \Ind_H^G(\sigma.\rho)$ is irreducible.  Given a semisimple element $s$ of $H$, we define the Lusztig series $\cE(G, (s))$ of $G$, corresponding to the $G$-conjugacy class $(s)$, as the collection of all irreducible $\chi$ of $G$ which appear as a constituent of some $\Ind_H^G(\rho)$ for some $\rho \in \cE(H, (s))$.

\begin{proposition} \label{JordanO}
For any semisimple class $(s)$ of $G = \gO^{\pm}(2n, \F_q)$ which is contained in $H = \SO^{\pm}(2n, \F_q)$, there exists a bijection
$$ J^G_s : \cE(G, (s)) \longrightarrow \cE(C_{G^*}(s), 1)$$
such that, if $\chi \in \cE(G, (s))$ with $J^G_s(\chi) = \psi$, then
\begin{align*}
\chi(1) & = \frac{2 [H^*: C_{H^*}(s)]_{p'}}{[C_{G^*}(s): C_{H^*}(s)]} \psi(1) \\
& = \left\{ \begin{array}{ll} 2[H^*: C_{H^*}(s)]_{p'} \psi(1) & \text{if } s \text{ has no } 1 \text{ nor } -1 \text{ eigenvalues,} \\
 \ [H^*: C_{H^*}(s)]_{p'} \psi(1) & \text{otherwise.} \end{array} \right.
\end{align*}
\end{proposition}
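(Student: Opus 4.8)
The plan is to build the bijection $J^G_s$ by combining the $\sigma$-equivariant Jordan decomposition map $J_s$ for $H = \SO^{\pm}(2n,\F_q)$ from Proposition \ref{SigmaEqui} with the induction/restriction dictionary between irreducible characters of $H$ and of $G = \gO^{\pm}(2n,\F_q)$ provided by \eqref{OSOindex2}, applied on both the ``source'' side $\cE(G,(s))$ and the ``target'' side $\cE(C_{G^*}(s),1)$. Concretely, fix a representative $s$ of the semisimple class and recall from Section \ref{CentsSemi} the dichotomy: either $(s)$ does not split in $H$ (equivalently $s$ has a $1$ or $-1$ eigenvalue, $C_{G^*}(s)$ is disconnected of the form $(\cdots)\rtimes\langle\sigma_s\rangle$, and $C_{H^*}(s) = C_{G^*}(s)^\circ$ up to the unitary/linear factors), or $(s)$ splits into $(s),(\sigma(s))$ in $H$ (equivalently $s$ has no $1$ or $-1$ eigenvalue, and $C_{G^*}(s) = C_{H^*}(s)$ is connected). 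I would treat these two cases separately, since the structure of $\Ind_H^G$ on $\cE(H,(s))$ is governed precisely by whether $\sigma$ fixes the relevant characters.

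First, the \emph{non-split} case ($s$ has a $1$ or $-1$ eigenvalue). Here $\sigma(s)$ is $H$-conjugate to $s$, so $\sigma$ acts on $\cE(H,(s))$, and by Proposition \ref{SigmaEqui} we may choose $J_s$ to intertwine this action with the action of $\sigma$ (via $\sigma_s$) on $\cE(C_{H^*}(s),1)$ — these are exactly the unipotent characters $\omega$ of $C_{H^*}(s)$ described in \eqref{UniCentSO}. On the source side, $\Ind_H^G$ sends a $\sigma$-fixed $\rho$ to a sum of two irreducibles of $G$ and a non-$\sigma$-fixed pair $\{\rho,\sigma.\rho\}$ to a single irreducible; on the target side, inducing from $(C_{G^*}(s)^\circ)^{F^*}$ to $C_{G^*}(s)$ does exactly the same thing to $\sigma_s$-fixed versus non-$\sigma_s$-fixed unipotent characters, which is precisely the content of the case analysis in Section \ref{UnipotentCent} (the $i=1,2$ versus $i=0$ labelling in \eqref{UniCentSO}). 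So $J_s$, being $\sigma$-equivariant, descends to a bijection $J^G_s$ between the $G$-irreducibles in $\cE(G,(s))$ and the $C_{G^*}(s)$-unipotent characters in $\cE(C_{G^*}(s),1)$. For degrees: if $\chi = \Ind_H^G(\rho)$ is irreducible (the $\rho$ non-$\sigma$-fixed case) then $\chi(1) = 2\rho(1)$ and correspondingly $\psi(1) = 2\omega(1)$ with $\rho(1) = [H^*:C_{H^*}(s)]_{p'}\,\omega(1)$ from \eqref{CharDegProp}, so the factors of $2$ cancel and $\chi(1) = [H^*:C_{H^*}(s)]_{p'}\,\psi(1)$; if instead $\rho$ is $\sigma$-fixed and $\chi$ is a constituent of $\Ind_H^G(\rho) = \chi_1+\chi_2$, then $\chi(1) = \rho(1)$, $\psi(1) = \omega(1)$, and again $\chi(1) = [H^*:C_{H^*}(s)]_{p'}\,\psi(1)$. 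In both sub-cases this matches the ``otherwise'' line of the statement, and since $[C_{G^*}(s):C_{H^*}(s)] = 2$ here it equals $\frac{2[H^*:C_{H^*}(s)]_{p'}}{[C_{G^*}(s):C_{H^*}(s)]}\psi(1)$.

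Next, the \emph{split} case ($s$ has no $1$ or $-1$ eigenvalue). Now $(s)$ and $(\sigma(s))$ are distinct $H$-classes, $C_{G^*}(s) = C_{H^*}(s)$ is connected and has only $\GL$ and $\GU$ factors, and $\Ind_H^G$ induces a bijection between $\sigma$-orbits $\{\rho,\sigma.\rho\}$ (with $\rho\in\cE(H,(s))$, $\sigma.\rho\in\cE(H,(\sigma(s)))$) and irreducibles of $G$ in $\cE(G,(s)) = \cE(G,(\sigma(s)))$, each such $\chi$ satisfying $\chi(1) = \Ind_H^G(\rho)(1) = 2\rho(1)$. I would define $J^G_s(\chi) := J_s(\rho)$, which lands in $\cE(C_{H^*}(s),1) = \cE(C_{G^*}(s),1)$; this is well-defined because using $\sigma.\rho\in\cE(H,(\sigma(s)))$ instead and the $\sigma$-equivariance $J_{\sigma(s)}(\sigma.\rho) = \sigma.\omega$ gives the $\sigma_s$-image, but on a purely $\GL$/$\GU$ centralizer $\sigma_s$ acts trivially on unipotent characters, so $\sigma.\omega = \omega$; one has to check this compatibility carefully, and also that the resulting map is surjective (every unipotent $\omega$ of $C_{G^*}(s)$ is $J_s(\rho)$ for some $\rho$, whose induced character hits the right $\chi$) and injective. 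The degree bookkeeping is: $\chi(1) = 2\rho(1) = 2[H^*:C_{H^*}(s)]_{p'}\,\omega(1) = 2[H^*:C_{H^*}(s)]_{p'}\,\psi(1)$, which is the first line of the displayed formula, and since $[C_{G^*}(s):C_{H^*}(s)] = 1$ this again equals $\frac{2[H^*:C_{H^*}(s)]_{p'}}{[C_{G^*}(s):C_{H^*}(s)]}\psi(1)$, so the uniform formula holds. The main obstacle I anticipate is precisely the bijectivity and well-definedness of $J^G_s$ across these two regimes — i.e.\ checking that the $\sigma$-orbit structure on $\cE(H,(s))$ matches, character-for-character, the $\sigma_s$-orbit structure on $\cE(C_{H^*}(s),1)$, including the subtle point that a character $\rho\in\cE(H,(s))$ is $\sigma$-fixed if and only if its Jordan correspondent has \emph{all} its orthogonal-type symbol components non-degenerate; this is where Proposition \ref{SigmaEqui} together with the explicit $\sigma$-action on unipotent characters of $\SO^{\pm}$ from Section \ref{UnipotentO} does the real work, and it is essentially the argument of \cite[Proposition 1.7]{AuMiRo} which I would follow, supplying the degree computation \eqref{CharDegProp} on top.
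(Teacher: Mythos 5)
Your proposal is correct and follows essentially the same route as the paper: fix a representative (with $\sigma(s)=s$ in the non-split case), use the $\sigma$-equivariant map $J_s$ of Proposition \ref{SigmaEqui} to match the $\sigma$-orbit structure on $\cE(H,(s))$ with the $\sigma_s$-orbit structure on $\cE(C_{H^*}(s),1)$, and then do the same degree bookkeeping via \eqref{CharDegProp} in the three sub-cases ($s$ without $\pm 1$ eigenvalues; $\rho$ not $\sigma$-fixed; $\rho$ $\sigma$-fixed). The bijectivity/well-definedness issues you flag are handled in the paper exactly as you suggest, by the equivariance statement itself together with the choice of representative, so no additional argument is needed.
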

\begin{proof}  Given $s \in \SO^{\pm}(2n, \F_q)=H$, we let $J_s$ denote the Jordan decomposition map in Proposition \ref{SigmaEqui} for $H$.  We consider $\rho \in \cE(H, (s))$ and $\chi \in \cE(G, (s))$ such that $\chi$ appears in the induction of $\rho$ to $G$.  

First suppose $s$ has no $1$ nor $-1$ eigenvalues, and so as in Section \ref{CentsSemi} we have $(s)_H \neq (\sigma(s))_H$.  Since $\rho \in \cE(H, (s))$ and $\sigma.\rho \in \cE(H, (\sigma(s)))$, then we have $\rho \neq \sigma.\rho$, and so $\chi = \Ind_H^G(\rho) = \Ind_H^G(\sigma.\rho)$.  In particular, $\chi(1) = 2 \rho(1) = 2 [H^*: C_{H^*}(s)]_{p'} \psi(1)$.  We also have $C_{H^*}(s) = C_{G^*}(s)$ from Section \ref{CentsSemi}, so if $J_s(\rho) = \psi$, then $\psi \in \cE(C_{G^*}(s), 1)$.  So we may define $J_s^G(\chi) = \psi$ in this case and the statement follows. 

Now suppose $s$ has some $1$ or $-1$ eigenvalue, and we fix some $s$ in the $H$-conjugacy class $(s)_H$ such that $\sigma(s) = s$ (as in \cite[Proof of Proposition 1.7 and Sec. 1.B]{AuMiRo}).  In this case, we have $[C_{G^*}(s): C_{H^*}(s)] = 2$, and if $J_s(\rho) = \omega$, then $J_s(\sigma.\rho) = \sigma.\omega$ since $\sigma(s) = s$.  First suppose $\sigma.\rho \neq \rho$, in which case $\sigma.\omega \neq \omega$.  Then we have $\chi = \Ind_H^G(\rho) = \Ind_H^G(\sigma.\rho)$, and $\Ind_{C_{H^*}(s)}^{C_{G^*}(s)}(\omega) = \Ind_{C_{H^*}(s)}^{C_{G^*}(s)}(\sigma.\omega) =: \psi$ is irreducible, and $\psi(1) = 2 \omega(1)$.  Then we may define $J_s^G(\chi) = \psi$, and we have
$$ \chi(1) = 2 \rho(1) = 2 [H^* : C_{H^*}(s)]_{p'} \omega(1) = [H^*: C_{H^*}(s)]_{p'} \psi(1).$$
Finally, suppose $\sigma.\rho = \rho$, where we still have $\sigma(s) = s$.  Then $\Ind_H^G(\rho) = \chi + \chi'$, where $\chi$ and $\chi'$ are irrreducible, and $\chi(1) = \chi'(1) = \rho(1)$.  If $J_s(\rho) = \omega$, then $\sigma.\omega = \omega$, and $\Ind_{C_{H^*}(s)}^{C_{G^*}(s)}(\omega) = \psi + \psi'$, where $\psi$ and $\psi'$ are irreducible and $\omega(1) = \psi(1) = \psi'(1)$.  We now define $J_s^G(\chi) = \psi$ and $J_s^G(\chi') = \psi'$ (where some choice is made here), and we have
$$ \chi(1) = \rho(1) = [H^*: C_{H^*}(s)]_{p'} \omega(1) = [H^*: C_{H^*}(s)]_{p'} \psi(1),$$
which concludes the last case.
\end{proof}

\section{Character degrees and generating functions} \label{GenFuns}

The main purpose of this section is to write down expression for degrees of arbitrary irreducible characters of $\SO^{\pm}(2n, \F_q)$ and $\gO^{\pm}(2n, \F_q)$, and generating functions for the sums of these degrees.

From the previous section, the irreducible characters of $H = \SO^{\pm}(2n, \F_q) = H^*$ are in bijection with $H^*$-conjugacy classes of pairs $(s, \omega)$, where $s \in H^*$ is semisimple and $\omega$ is a unipotent character of $C_{H^*}(s)$, and if $\rho$ corresponds to the class of $(s, \omega)$, then $\rho(1) = [H^*: C_{H^*}(s)]_{p'} \omega(1)$.  By Proposition \ref{JordanO}, the irreducible characters of $G = \gO^{\pm}(2n, \F_q) = G^*$ correspond to $G^*$-classes of pairs $(s, \psi)$, where $s \in H^*$ is semisimple, $\psi$ is a unipotent character of $C_{G^*}(s)$, and if $\chi$ corresponds to the class of $(s, \psi)$, then $\chi(1) = \frac{[H^*: C_{H^*}(s)]_{p'}}{[C_{G^*}(s) : C_{H^*}(s)]} \psi(1)$.

If $\psi_{\GU, \lambda}$ is the unipotent character of $\GU(n, \F_{q^d})$ parameterized by $\lambda$, then we write
$$ \psi_{\GU, \lambda}(1) = \prod_{i=1}^n (q^{di} - (-1)^i) \cdot \delta_{\GU}(\lambda, 2d),$$
and if $\psi_{\GL, \lambda}$ is the unipotent character of $\GL(n, \F_{q^d})$ parameterized by $\lambda$, then write
$$ \psi_{\GL, \lambda}(1) = \prod_{i = 1}^n (q^{di} - 1) \cdot \delta_{\GL}(\lambda, d).$$
The expressions $\delta_{\GU}(\lambda, 2d)$ and $\delta_{\GL}(\lambda, d)$ are given explicitly in \cite[Sec. 3.3]{V17}, although we will not need them here.  When $n=0$, we take $\lambda = \varnothing$ to be the only partition of size $0$, and we define in this case $\delta_{\GU}(\lambda, 2d) = \delta_{\GL}(\lambda, d) = 1$ for any $d\geq 1$.

First consider an irreducible character $\chi$ of $G = \gO^{\pm}(2n, \F_q)$, corresponding to the class of pairs $(s, \psi)$, and the class $G^*$-class $(s)$ corresponds to the pair $(\Phi, \eta)$ satisfying \eqref{SemiFunctions}, \eqref{SemiCond1}, and \eqref{SemiCond2}.  In the general case, we have $C_{G^*}(s)$ given by \eqref{CentO}, and a unipotent character $\psi$ of that centralizer is given by \eqref{UniCentO}, where the degree of $\psi$ is given by
$$ \psi(1) = P_{\gO}(s) \prod_{f \in \cN(q)'} \delta_{\GU}(\lambda_f, \deg(f)) \prod_{\{ g, g^*\} \in \cM(q)} \delta_{\GL}(\lambda_g, \deg(g)) \cdot \delta(\Xi_+) \delta(\Xi_-),$$
where $P_{\gO}(s)$ is the expression
\begin{align*}
P_{\gO}(s) = &\prod_{f \in \cN(q)'}  \prod_{i = 1}^{m_f} (q^{i \deg(f)/2} - (-1)^i) \prod_{ \{ g, g^*\} \in \cM(q)} \prod_{i = 1}^{m_g} (q^{i \deg(g)} - 1) \\
& \cdot (q^{m_+} - \eta(+)1) \prod_{i = 1}^{m_+ - 1} (q^{2i} - 1) \cdot (q^{m_-} - \eta(-)1) \prod_{i=1}^{m_- - 1} (q^{2i} - 1),
\end{align*}
where either of the last two terms involving $m_+$ and $m_-$ are left off when $m_+ = 0$ or $m_-=0$, respectively.

From \eqref{CentSO} and the sentences following it, we have
$$ [H^*: C_{H^*}(s)]_{p'} = \left\{ \begin{array}{ll} \frac{(q^n \mp 1) \prod_{i = 1}^{n-1} (q^{2i}-1)}{2P_{\gO}(s)} & \text{if } m_+ \neq 0 \text{ and } m_- \neq 0, \\ \frac{(q^n \mp 1) \prod_{i = 1}^{n-1} (q^{2i}-1)}{P_{\gO}(s)} & \text{otherwise,} \end{array} \right.$$
where we note in the first case we always have $p \neq 2$.  From this and Proposition \ref{JordanO}, if the character $\chi$ of $\gO^{\pm}(2n, \F_q)$ corresponds to $(s, \psi)$, then the \emph{modified} character degree of $\chi$ is given by
$$\frac{\chi(1)}{2 (q^n \mp 1) \prod_{i=1}^{n-1} (q^{2i} - 1)} = \prod_{f \in \cN(q)'} \delta_{\GU}(\lambda_f, \deg(f)) \prod_{\{ g, g^*\} \in \cM(q)} \delta_{\GL}(\lambda_g, \deg(g)) \cdot$$
\begin{equation} \label{ModCharDegO}
\cdot \left\{ \begin{array}{ll} 1 & \text{if } s \text{ has no } 1 \text{ nor } -1 \text{ eigenvalues,} \\
\frac{1}{2}  \delta(\Xi_{+/-}) & \text{if } s \text{ has } 1 \text{ or } -1 \text{ eigenvalues, but not both,} \\
\frac{1}{4} \delta(\Xi_{+}) \delta(\Xi_{-}) & \text{otherwise, } \end{array} \right.
\end{equation}
where the last case only occurs for $p \neq 2$.

Based on the above, we define
\begin{equation} \label{Vdefn}
\sV(z) = \sum_{\Xi \in \cO} \frac{1}{2} \delta(\Xi) z^{|\Xi|}.
\end{equation}
Note that the constant term of $\sV(z)$ is $1$, since for the only $\Xi= (\varnothing, \varnothing) \in \cO$ of rank $0$, we have defined $\delta(\Xi) = 2$.

We also define $N^*(q; d)$ to be the number of polynomials in $\cN(q)$ of degree $d$, and $M^*(q; d)$ to be the number of unordered pairs $\{ g, g^* \}$ in $\cM(q)$ such that $\deg(g) = d$.  Recall $\Sigma(G)$ denotes the sum of the irreducible character degrees of a finite group $G$, and that $e=e(q)$ is defined to be $2$ if $q$ is odd and $1$ if $q$ is even.

\begin{proposition} \label{GenFunO}  For any $n >0$, the expression
$$\frac{\Sigma(\gO^{+}(2n, \F_q))}{2(q^n -1) \prod_{i=1}^{n-1} (q^{2i} - 1) }  + \frac{\Sigma(\gO^{-}(2n, \F_q))} {2(q^n + 1) \prod_{i=1}^{n-1} (q^{2i} - 1)}$$
is the coefficient of $z^n$, in the generating function
$$\prod_{d \geq 1} \left( \sum_{\lambda \in \cP} \delta_{\GU}(\lambda, 2d) z^{|\lambda| d} \right)^{N^*(q; 2d)} \left (\sum_{\lambda \in \cP} \delta_{\GL}(\lambda, d) z^{|\lambda| d} \right)^{M^*(q; d)} \sV(z)^e,$$
which has constant term $1$, and where $e=e(q)$ is as in \eqref{edefn}.
\end{proposition}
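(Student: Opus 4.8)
The plan is to sum the modified character degrees of \eqref{ModCharDegO} over the irreducible characters of $\gO^{+}(2n,\F_q)$ and of $\gO^{-}(2n,\F_q)$ simultaneously, and to recognize the result as an Euler-type product over the building blocks ($\cN(q)'$, $\cM(q)$, and the $\pm1$-parts) of a semisimple conjugacy class.

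First I would unwind the parametrization from Section \ref{JordanD}. By Proposition \ref{JordanO} and the description \eqref{UniCentO} of the unipotent characters of $C_{G^*}(s)$, for a fixed type $\tau\in\{+,-\}$ the irreducible characters of $\gO^{\tau}(2n,\F_q)$ are in bijection with pairs $(s,\psi)$, where $(s)$ runs over the $G^*$-classes of semisimple elements of $\SO^{\tau}(2n,\F_q)$ --- equivalently, by Section \ref{CentsSemi}, over the pairs $(\Phi,\eta)$ of \eqref{SemiFunctions} subject to \eqref{SemiCond1}, \eqref{SemiCond2}, \eqref{SemiCond3} with this $\tau$ --- and $\psi$ is a unipotent character of $C_{G^*}(s)$, given by a partition $\lambda_f$ of $m_f$ for each $f\in\cN(q)'$, a partition $\lambda_g$ of $m_g$ for each $\{g,g^*\}\in\cM(q)$, and an orthogonal symbol $\Xi_{+/-}$ of rank $m_{+/-}$ in $\cO^{\eta(+/-)}$ (with no $m_-$, $\Xi_-$ when $q$ is even). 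The point is that \eqref{SemiCond3} merely records in which of $\gO^{+}$, $\gO^{-}$ the class sits; so summing over both types deletes that condition, and the expression in the proposition, whose summands are the modified degrees \eqref{ModCharDegO}, equals
\[
\sum_{\substack{(\Phi,\eta)\\ |\Phi|=n}}\ \sum_{\psi}\ \bigl(\text{modified degree of the character attached to }(s,\psi)\bigr),
\]
where $(\Phi,\eta)$ now runs over all data satisfying \eqref{SemiCond1} and \eqref{SemiCond2}, with $\eta$ unconstrained.

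Next I would collapse the three cases in \eqref{ModCharDegO} into one. Using the convention $\delta(\varnothing,\varnothing)=2$ from Section \ref{UnipotentO}, and interpreting ``$s$ has no $1$ (resp.\ $-1$) eigenvalue'' as ``$m_+=0$ (resp.\ $m_-=0$) with $\Xi_{+/-}=(\varnothing,\varnothing)$'', the case-factor in \eqref{ModCharDegO} is uniformly $\tfrac12\delta(\Xi_+)\cdot\tfrac12\delta(\Xi_-)$, because $\tfrac12\delta(\varnothing,\varnothing)=1$. Likewise, \eqref{SemiCond2} forces $\eta(+)=+$ when $m_+=0$, while for $m_+>0$ letting $\eta(+)$ run over $\{+,-\}$ and $\Xi_+$ over the rank-$m_+$ symbols of $\cO^{\eta(+)}$ is the same as letting $\Xi_+$ run over all rank-$m_+$ symbols of $\cO=\cO^{+}\sqcup\cO^{-}$ (the defect mod $4$ recovering $\eta(+)$), and similarly for $\eta(-),\Xi_-$. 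Thus the double sum above is just the sum, over all choices of $\lambda_f\in\cP$ for each $f\in\cN(q)'$, of $\lambda_g\in\cP$ for each $\{g,g^*\}\in\cM(q)$, and of $\Xi_+\in\cO$ (and $\Xi_-\in\cO$ if $q$ is odd), subject to
\[
\sum_{f\in\cN(q)'}|\lambda_f|\tfrac{\deg(f)}{2}\ +\ \sum_{\{g,g^*\}\in\cM(q)}|\lambda_g|\deg(g)\ +\ |\Xi_+|\ +\ |\Xi_-|\ =\ n,
\]
of $\prod_{f}\delta_{\GU}(\lambda_f,\deg(f))\cdot\prod_{\{g,g^*\}}\delta_{\GL}(\lambda_g,\deg(g))\cdot\tfrac12\delta(\Xi_+)\cdot\tfrac12\delta(\Xi_-)$, where the summand $|\Xi_-|$ and the factor $\tfrac12\delta(\Xi_-)$ are dropped when $q$ is even.

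Finally I would read off the generating function, tracking the exponent of $z$ by the displayed rank identity: each $f\in\cN(q)'$ of degree $2d$ contributes the factor $\sum_{\lambda\in\cP}\delta_{\GU}(\lambda,2d)z^{|\lambda|d}$, each $\{g,g^*\}\in\cM(q)$ of degree $d$ contributes $\sum_{\lambda\in\cP}\delta_{\GL}(\lambda,d)z^{|\lambda|d}$, and each of the $e=e(q)$ slots for $\Xi_{\pm}$ contributes $\sV(z)$ from \eqref{Vdefn}. Since $t\pm1$ are the only degree-$1$ members of $\cN(q)$ and every element of $\cN(q)'$ has even degree, there are exactly $N^*(q;2d)$ polynomials $f\in\cN(q)'$ of degree $2d$ and $M^*(q;d)$ pairs $\{g,g^*\}\in\cM(q)$ of degree $d$; multiplying the blocks gives the asserted product, whose constant term is $1$ because every factor has constant term $1$ ($\delta_{\GU}(\varnothing,2d)=\delta_{\GL}(\varnothing,d)=1$ and $\sV$ has constant term $1$). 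The one genuinely delicate point --- and the step I expect to demand the most care --- is this uniform bookkeeping: reconciling the split/non-split union, the splitting of classes with no $\pm1$ eigenvalue, the three cases of \eqref{ModCharDegO}, the $q$-even versus $q$-odd dichotomy, and the various factors of $2$, all of which is engineered to cancel by the conventions $\delta(\varnothing,\varnothing)=2$ and the normalization of $\sV$.
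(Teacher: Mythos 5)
Your proposal is correct and follows essentially the same route as the paper's proof: sum the modified degrees \eqref{ModCharDegO} over all data $(\Phi,\lambda_f,\lambda_g,\Xi_{+/-})$, observing that the type $\tau$ and the sign function $\eta$ are recovered from \eqref{SemiCond3} and the defects of the chosen orthogonal symbols, so that summing over both $\gO^{+}$ and $\gO^{-}$ removes those constraints, and then factor the sum as an Euler product with exponents $N^*(q;2d)$ and $M^*(q;d)$ and the factor $\sV(z)^e$. Your explicit collapse of the three cases of \eqref{ModCharDegO} into the uniform factor $\tfrac12\delta(\Xi_+)\cdot\tfrac12\delta(\Xi_-)$ via the convention $\delta(\varnothing,\varnothing)=2$ is exactly the bookkeeping the paper's conventions are designed to achieve.
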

\begin{proof}  The generating function we seek has, as the coefficient of $z^n$ for $n>0$, the sum of the modified character degrees given in \eqref{ModCharDegO} over all possible classes of pairs $(s, \psi)$ which parameterize irreducible characters of $\gO^{+}(2n, \F_q)$ or $\gO^{-}(2n, \F_q)$.  When specifying the semisimple class $(s)$ by choosing the functions $\Phi$ and $\eta$ in \eqref{SemiFunctions} which satisfy \eqref{SemiCond1}, \eqref{SemiCond2}, and \eqref{SemiCond3}, first note that if $m_+ = m_- = 0$, then the character corresponding to $(s, \psi)$ depends only on \eqref{SemiCond1} and the choice of partitions $\lambda_f$ of $m_f$ for $f \in \cN(q)'$ and $\lambda_g$ of $m_g$ for $\{g, g^*\} \in \cM(q)$.  If $m_+$ or $m_-$ is nonzero, then as in \eqref{UniCentO} we must choose unipotent characters $\psi_{\Xi_{+/-}}$, where $\Xi_{+/-}$ are orthogonal symbols of rank $m_{+/-}$, and with defect modulo $4$ depending on the signs from the function $\eta$ satisfying \eqref{SemiCond3}.  However, if we only specify $\Phi$ satisfying \eqref{SemiCond1}, and choose partitions $\lambda_f$ of $m_f$, $\lambda_g$ of $m_g$, and orthogonal symbols $\Xi_{+/-} \in \cO$ of rank $m_{\pm}$, then the function $\eta$ will then be determined by the defects of the chosen orthogonal symbols $\Xi_{+/-}$.  This determines a pair $(s, \psi)$ corresponding to a unique character of $\gO^+(2n, \F_q)$ or $\gO^-(2n, \F_q)$.  That is, an irreducible character of $\gO^+(2n, \F_q)$ or $\gO^-(2n, \F_q)$ and its degree is completely determined by a choice of the function $\Phi$ satisfying \eqref{SemiCond1}, partitions $\lambda_f$ of $m_f$ and $\lambda_g$ of $m_g$, and orthogonal symbols $\Xi_{+/-}$ of rank $m_{+/-}$.

Given the above argument together with the expression for the modified character degree in \eqref{ModCharDegO}, the generating function we want is given by
$$ \prod_{f \in \cN(q)'} \left( \sum_{\lambda_f \in \cP} \delta_{\GU} (\lambda_f, \deg(f)) z^{|\lambda_f| \deg(f)/2} \right)$$
$$ \cdot \prod_{ \{ g, g^*\} \in \cM(q)} \left( \sum_{\lambda_g \in \cP} \delta_{\GL} (\lambda_g, \deg(g)) z^{|\lambda_g| \deg(g)} \right) $$
$$\cdot \left( \sum_{\Xi_+ \in \cO} \frac{1}{2}\delta(\Xi_+) z^{|\Xi_+|} \right)  \left( \sum_{\Xi_- \in \cO} \frac{1}{2}\delta(\Xi_-) z^{|\Xi_-|} \right),$$
where the last factor is not included in the case $p=2$.  The last two factors can thus be replaced by $\sV(z)^e$, while in the first two factors the expressions only depend on the degrees of the polynomials $f(t)$ and $g(t)$, and not the polynomials themselves.  The generating function can thus be written as
$$\prod_{d \geq 2} \left( \sum_{\lambda \in \cP} \delta_{\GU}(\lambda, d) z^{|\lambda| d/2} \right)^{N^*(q; d)} \prod_{d \geq 1} \left (\sum_{\lambda \in \cP} \delta_{\GL}(\lambda, d) z^{|\lambda| d} \right)^{M^*(q; d)} \sV(z)^e,$$
where we note $\cN(q)'$ contains no polynomials of degree $1$.  As stated in Section \ref{CentsSemi}, all polynomials in $\cN(q)'$ have even degree, and so we may replace $d$ with $2d$ in the first product, and index it over all $d \geq 1$.  Noting that the constant term of this generating function is $1$ since the same is true of each factor, the result follows.
\end{proof}

We next consider the degrees of characters of $\SO^{\pm}(2n, \F_q)$.  Let $\rho$ be such a character, and suppose it corresponds to the pair $(s, \omega)$ via the Jordan decomposition.  First suppose that the semisimple class $(s)$ of $H^* = \SO^{\pm}(2n, \F_q)$ does not contain both $1$ and $-1$ eigenvalues.  
In this case, as in Section \ref{CentsSemi}, the centralizer $C_{H^*}(s)$ has structure
\begin{align*}
C_{H^*}(s) \cong & \prod_{f \in \cN(q)'} \GU(m_f, \F_{q^{\deg(f)/2}}) \times \prod_{ \{ g, g^*\}  \in \cM(q)} \GL(m_g, \F_{q^{\deg(g)}}) \\
& \times \SO^{\eta(+/-)} (2m_{+/-}, \F_q), 
\end{align*}
where the last factor does not appear if $m_+ = m_- = 0$.   Then we have $[H^*: C_{H^*}(s)]_{p'} = \frac{(q^n \mp 1) \prod_{i = 1}^{n-1} (q^{2i} - 1)}{P_{\gO}(s)}$, as above.  A unipotent character $\omega$ of $C_{H^*}(s)$ here is given by \eqref{UniCentSO2} with degree
$$ \omega(1) = P_{\gO}(s) \prod_{f \in \cN(q)'} \delta_{\GU}(\lambda_f, \deg(f)) \prod_{\{ g, g^*\} \in \cM(q)} \delta_{\GL}(\lambda_g, \deg(g)) \cdot \delta(\Lambda_{+/-}).$$
By \eqref{CharDegProp}, the modified character degree of $\rho$ in this case is given by
$$\frac{\rho(1)}{(q^n \mp 1) \prod_{i=1}^{n-1} (q^{2i} - 1)} = \prod_{f \in \cN(q)'} \delta_{\GU}(\lambda_f, \deg(f)) \prod_{\{ g, g^*\} \in \cM(q)} \delta_{\GL}(\lambda_g, \deg(g)) \cdot$$
\begin{equation} \label{ModCharDegSO1}
\cdot \left\{ \begin{array}{ll} 1 & \text{if } s \text{ has no } 1 \text{ nor } -1 \text{ eigenvalues,} \\
\delta(\Lambda_{+/-}) & \text{if } s \text{ has } 1 \text{ or } -1 \text{ eigenvalues, but not both.}  \end{array} \right.
\end{equation}

Now consider the case that the semisimple class $(s)$ has both $1$ and $-1$ eigenvalues (so $q$ is necessarily odd).  The centralizer $C_{H^*}(s)$ is then given by \eqref{CentSO}, and $[H^*: C_{H^*}(s)]_{p'} = \frac{(q^n \mp 1) \prod_{i = 1}^{n-1} (q^{2i} - 1)}{2 P_{\gO}(s)}$ as above.  An arbitrary unipotent character $\omega$ of $C_{H^*}(s)$ is given by \eqref{UniCentSO}.  By the description in Section \ref{UnipotentCent}, the degree of the factor $\psi_{[\Lambda_+, \Lambda_-], i}$ is given by
$$ \psi_{[\Lambda_+, \Lambda_-], i}(1) = \left\{ \begin{array}{ll} 2\psi_{\Lambda_+}(1) \psi_{\Lambda_-}(1) & \text{ if } i = 0, \text{ that is, if } \Lambda_+ \in \cG \text{ or } \Lambda_- \in \cG, \\
\psi_{\Lambda_+}(1) \psi_{\Lambda_-}(1) & \text{ if } i= 1, 2, \text{ that is, if } \Lambda_+, \Lambda_- \in \cR. \end{array} \right.$$
That is, the degree of $\omega$ is given by
\begin{align*}
\omega(1) = P_{\gO}(s)  & \prod_{f \in \cN(q)'} \delta_{\GU}(\lambda_f, \deg(f)) \prod_{\{ g, g^*\} \in \cM(q)} \delta_{\GL}(\lambda_g, \deg(g)) \cdot \\
& \cdot \left\{ \begin{array}{ll} 2\delta(\Lambda_+) \delta(\Lambda_-) & \text{if } \Lambda_+ \in \cG \text{ or } \Lambda_- \in \cG \\
\delta(\Lambda_+) \delta(\Lambda_-) & \text{ if } \Lambda_+, \Lambda_- \in \cR. \end{array} \right.
\end{align*}

It follows from these facts and \eqref{CharDegProp} that the modified character degree of $\rho$ in this case is given by
$$\frac{\rho(1)}{(q^n \mp 1) \prod_{i=1}^{n-1} (q^{2i} - 1)} = \prod_{f \in \cN(q)'} \delta_{\GU}(\lambda_f, \deg(f)) \prod_{\{ g, g^*\} \in \cM(q)} \delta_{\GL}(\lambda_g, \deg(g)) \cdot$$
\begin{equation} \label{ModCharDegSO2}
\cdot \left\{ \begin{array}{ll} \delta(\Lambda_+) \delta(\Lambda_-) & \text{if } \Lambda_+ \in \cG \text{ or } \Lambda_- \in \cG\\
\frac{1}{2}\delta(\Lambda_+) \delta(\Lambda_-) & \text{if } \Lambda_+, \Lambda_- \in \cR. \end{array} \right.
\end{equation}
We recall that in the first case above, when $\Lambda_+ \in \cG$ or $\Lambda_- \in \cG$, the same character is obtained if we choose $\Lambda_+'$ and $\Lambda_-'$, where $\Lambda' = \Lambda$ when $\Lambda \in \cR$.  In the second case, there are two distinct characters for every choice of $\Lambda_+, \Lambda_- \in \cR$.

Since the set of orthogonal symbols is the disjoint union $\cO = \cR^{\sa} \cup \cG^{\sa}$ of orthogonal non-degenerate and degenerate symbols, then if we define
$$\sR(z)_{\sa} = \sum_{\Xi \in \cR_{\sa}} \frac{1}{2} \delta(\Xi) z^{|\Xi|} \quad \text{and} \quad \sG(z)_{\sa} = \sum_{\Xi \in \cG_{\sa}} \frac{1}{2} \delta(\Xi) z^{|\Xi|},$$
then we have $\sV(z) = \sR(z)_{\sa} + \sG(z)_{\sa}$.
Now define
$$ \sR(z) = \sum_{\Lambda \in \cR} \frac{1}{2}\delta(\Lambda) z^{|\Lambda|} \quad \text{and} \quad \sG(z) = \sum_{\Lambda \in \cG} \frac{1}{2} \delta(\Lambda) z^{|\Lambda|}.$$
As in Sections \ref{UnipotentSO} and \ref{UnipotentO}, for each non-degenerate symbol $\Lambda = [\mu, \lambda] \in \cR$, there are two non-degenerate orthogonal symbols  $\Xi = (\mu, \lambda) \in \cR_{\sa}$ and $\Xi' = (\lambda, \mu) \in \cR_{\sa}$, where $\delta(\Xi) = \delta(\Xi') = \delta(\Lambda)$.  It follows that $2\sR(z) = \sR(z)_{\sa}$.  For each pair of degenerate symbols $\Lambda, \Lambda' \in \cG$ corresponding to $[\mu, \lambda]$ with $\mu = \lambda$, there is a single degenerate orthogonal symbol $\Xi = (\mu, \lambda) \in \cG_{\sa}$, and we have $\delta(\Xi) = 2\delta(\Lambda)$ in this case.  It follows that $\sG(z)_{\sa} = \sG(z)$.  Now we have
\begin{equation} \label{SOunipsum}
\sum_{\Lambda \in \cS} \delta(\Lambda) z^{|\Lambda|} = 2\sR(z) + 2\sG(z) = \sR(z)_{\sa} + \sG(z)_{\sa} + \sG(z) = \sV(z) + \sG(z).
\end{equation}
We note that $\sV(z) + \sG(z)$ has constant term $2$, since $\sV(z)$ and $\sG(z)$ each have constant term $1$.

\begin{proposition} \label{GenFunSO} The expression
$$\frac{\Sigma(\SO^{+}(2n, \F_q))}{(q^n -1) \prod_{i=1}^{n-1} (q^{2i} - 1) }  + \frac{\Sigma(\SO^{-}(2n, \F_q))} {(q^n + 1) \prod_{i=1}^{n-1} (q^{2i} - 1)}$$
is the coefficient of $z^n$, $n>0$, in the generating function
$$\prod_{d \geq 1}  \left( \sum_{\lambda \in \cP} \delta_{\GU}(\lambda, 2d) z^{|\lambda| d} \right)^{N^*(q; 2d)} \left (\sum_{\lambda \in \cP} \delta_{\GL}(\lambda, d) z^{|\lambda| d} \right)^{M^*(q; d)}  (\sV(z)^e + \sG(z)^e),$$
which has constant term 2, and where $e=e(q)$ is as in \eqref{edefn}.
\end{proposition}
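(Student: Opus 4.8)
The plan is to follow the proof of Proposition \ref{GenFunO} almost verbatim, the only genuinely new work being the identification of the ``orthogonal factor'' of the generating function. By the Jordan decomposition of characters recalled at the beginning of Section \ref{GenFuns} together with \eqref{CharDegProp}, the expression in the statement equals the sum, over all $H^*$-conjugacy classes of pairs $(s,\omega)$ that parameterize the characters in $\Irr(\SO^+(2n,\F_q)) \cup \Irr(\SO^-(2n,\F_q))$, of the modified character degrees \eqref{ModCharDegSO1} and \eqref{ModCharDegSO2} (the modified degree of a character of $\SO^{\tau}$ being its degree divided by $(q^n - \tau 1)\prod_{i=1}^{n-1}(q^{2i}-1)$). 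As in the proof of Proposition \ref{GenFunO}, such a pair is determined by a function $\Phi$ satisfying \eqref{SemiCond1}, partitions $\lambda_f$ of $m_f$ for each $f \in \cN(q)'$ and $\lambda_g$ of $m_g$ for each $\{g,g^*\} \in \cM(q)$, and unipotent-character data attached to the $1$- and $-1$-eigenspaces of $s$, with $\eta$, and hence the type $\tau$, forced by the defects of the symbols chosen on those eigenspaces. The $\GU$- and $\GL$-factors then contribute exactly the two products $\prod_{d\geq 1}(\sum_{\lambda\in\cP}\delta_{\GU}(\lambda,2d)z^{|\lambda|d})^{N^*(q;2d)}$ and $\prod_{d\geq 1}(\sum_{\lambda\in\cP}\delta_{\GL}(\lambda,d)z^{|\lambda|d})^{M^*(q;d)}$ appearing in Proposition \ref{GenFunO}, so everything reduces to showing that the remaining factor $A(z)$ --- the generating function in $z$, graded by $m_+ + m_-$, assembled from the $\delta$-parts of \eqref{ModCharDegSO1} and \eqref{ModCharDegSO2} --- equals $\sV(z)^e + \sG(z)^e$.

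To compute $A(z)$ I would split the sum according to which of $1,-1$ occur as eigenvalues of $s$. If $s$ has neither $1$ nor $-1$ as an eigenvalue, then by Section \ref{CentsSemi} the $\gO^*$-class of $s$ splits into the two distinct $\SO^*$-classes $(s)_{H^*}$ and $(\sigma(s))_{H^*}$; since $\sigma$ carries $\cE(H,(s)_{H^*})$ bijectively and degree-preservingly onto $\cE(H,(\sigma(s))_{H^*})$, and there is no symbol data in this case, these two Lusztig series together contribute the constant $2$ to $A(z)$. If exactly one of $m_+,m_-$ is positive, then by \eqref{ModCharDegSO1} the relevant weight is $\delta(\Lambda_{+/-})$ for a symbol $\Lambda_{+/-}\in\cS$ of the corresponding rank; summing over the choice of which eigenvalue occurs (only one choice when $q$ is even) and invoking \eqref{SOunipsum} in the form $\sum_{\Lambda\in\cS}\delta(\Lambda)z^{|\Lambda|} = \sV(z)+\sG(z)$ shows this part contributes $2(\sV(z)+\sG(z)-2)$ when $q$ is odd and $\sV(z)+\sG(z)-2$ when $q$ is even. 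Finally, when $q$ is odd and both $m_+,m_-$ are positive, I would read off from Section \ref{UnipotentCent} that a pair $(\Lambda_+,\Lambda_-)\in\cR\times\cR$ gives two characters, each of weight $\tfrac12\delta(\Lambda_+)\delta(\Lambda_-)$, while a pair with $\Lambda_+\in\cG$ or $\Lambda_-\in\cG$ lies in a $\sigma$-orbit of size $2$ that gives a single character of weight $\delta(\Lambda_+)\delta(\Lambda_-)$; since $\sum_{\Lambda\in\cR}\delta(\Lambda)z^{|\Lambda|}=2\sR(z)$ and the positive-rank symbols in $\cS$ are the disjoint union of $\cR$ with the positive-rank degenerate symbols, this part contributes $\tfrac12(\sV(z)+\sG(z)-2)^2 + \tfrac12(2\sR(z))^2$.

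Adding the three pieces and simplifying with the identity $\sV(z) = 2\sR(z)+\sG(z)$ then yields $A(z) = \sV(z)^2 + \sG(z)^2$ for $q$ odd and $A(z) = \sV(z)+\sG(z)$ for $q$ even, i.e.\ $A(z) = \sV(z)^e + \sG(z)^e$ in both cases, with constant term $2$ because $\sV(z)$ and $\sG(z)$ each have constant term $1$. Multiplying $A(z)$ by the $\GU$- and $\GL$-products gives the asserted generating function. The main obstacle is bookkeeping the several factors of $2$ correctly: the factor $2$ from a semisimple $\gO^*$-class splitting into two $\SO^*$-classes when $s$ has no $\pm 1$ eigenvalues, and the competition between the factor $\tfrac12$ in \eqref{ModCharDegSO2} for pairs of non-degenerate symbols and the factor $2$ for the size of a $\sigma$-orbit of a pair containing a degenerate symbol. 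It is precisely this competition, together with the $\cG$-versus-$\cR$ split of the symbols attached to the $\pm 1$-eigenspaces, that produces the extra summand $\sG(z)^e$ here, as compared with the single power $\sV(z)^e$ appearing for the full orthogonal groups in Proposition \ref{GenFunO}.
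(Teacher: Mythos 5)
Your proposal is correct and follows essentially the same route as the paper: the same Jordan-decomposition parameterization and modified degrees \eqref{ModCharDegSO1}--\eqref{ModCharDegSO2}, the same reduction of the $\GU$/$\GL$ factors to the proof of Proposition \ref{GenFunO}, and the same bookkeeping of the degenerate-versus-nondegenerate symbols and split semisimple classes that the paper packages into the factor $\sF(z)$ in \eqref{FFactor}. Your explicit three-case split (no $\pm1$ eigenvalues, exactly one, both) with the subsequent algebraic recombination $2 + 2(\sV+\sG-2) + \tfrac12(\sV+\sG-2)^2 + \tfrac12(2\sR)^2 = \sV^2+\sG^2$ is just a reorganization of the paper's computation, and all the factors of $2$ and $\tfrac12$ check out.
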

\begin{proof}  We prove the statement for the cases that $q$ is even or odd separately.  Given all possible pairs $(s, \omega)$ which correspond to irreducible characters of $\SO^+(2n, \F_q)$ or $\SO^-(2n, \F_q)$, we need the sum of the modified character degrees to be the coefficient of $z^n$ in our generating function.

First suppose that $q$ is even.  Consider a character corresponding to the pair $(s, \omega)$.  Assume first that $(s)$ is a semisimple class which has $1$ as an eigenvalue.  Similar to the proof of Proposition \ref{GenFunO}, once we specify the function $\Phi$ in \eqref{SemiFunctions}, and the unipotent factor $\psi_{\Lambda}$ for some $\Lambda \in \cS$ with $|\Lambda| = m_+$, then the sign $\eta$ in \eqref{SemiFunctions} is determined, as is the sign corresponding to the type of special orthogonal group with a character to which $(s, \omega)$ corresponds.  In the case $(s)$ has no $1$ eigenvalues, then we recall that there are two semisimple classes corresponding to the choice of $\Phi$ as in the last paragraph of Section \ref{CentsSemi}, and the sign of the corresponding special orthogonal group is determined by \eqref{SemiCond3}.  Thus in this case, the choice of the function $\Phi$ and the unipotent character $\omega$ determines two distinct characters of $\SO^{\pm}(2n, \F_q)$ of the same degree.  It follows from this and \eqref{ModCharDegSO1} that in the case that $q$ is even, the generating function we seek is given by
$$ \prod_{f \in \cN(q)'} \left( \sum_{\lambda_f \in \cP} \delta_{\GU} (\lambda_f, \deg(f)) z^{|\lambda_f| \deg(f)/2} \right)$$
$$ \cdot \prod_{ \{ g, g^*\} \in \cM(q)} \left( \sum_{\lambda_g \in \cP} \delta_{\GL} (\lambda_g, \deg(g)) z^{|\lambda_g| \deg(g)} \right) \cdot \left( \sum_{\Lambda \in \cS} \delta(\Lambda) z^{|\Lambda|} \right),$$
noting that the constant term in the last factor is $2$, corresponding to the two irreducible characters corresponding to the choice of $\Phi$ and $\omega$ when $(s)$ has no $1$ eigenvalues.  From the argument in Proposition \ref{GenFunO}, together with \eqref{SOunipsum}, this simplifies to the claimed generating function,
$$\prod_{d \geq 1}  \left( \sum_{\lambda \in \cP} \delta_{\GU}(\lambda, 2d) z^{|\lambda| d} \right)^{N^*(q; 2d)} \left (\sum_{\lambda \in \cP} \delta_{\GL}(\lambda, d) z^{|\lambda| d} \right)^{M^*(q; d)}  (\sV(z) + \sG(z)).$$

We now assume $q$ is odd.  As in the previous arguments of this section, the modified degree of an irreducible character of $\SO^{\pm}(2n, \F_q)$ boils down to the choice of $\Phi$ and a unipotent character $\omega$.  It follows as before that the generating function we need is of the form
$$\prod_{d \geq 1}  \left( \sum_{\lambda \in \cP} \delta_{\GU}(\lambda, 2d) z^{|\lambda| d} \right)^{N^*(q; 2d)} \left (\sum_{\lambda \in \cP} \delta_{\GL}(\lambda, d) z^{|\lambda| d} \right)^{M^*(q; d)} \cdot \sF(z),$$
where $\sF(z)$ corresponds to the choice of the unipotent factor of $\omega$ coming from the $1$ and $-1$ eigenvalues of $(s)$.  Note that $\sF(z)$ should have constant term $2$, since there are two distinct semisimple classes $(s)$ corresponding to $\Phi$ when $s$ has no $1$ or $-1$ eigenvalues.  We now describe the factor $\sF(z)$.

If we are choosing the unipotent factor corresponding to $(s)$ having a nonzero number  of both $1$ and $-1$ eigenvalues, then we consider the terms in \eqref{ModCharDegSO2}.  For every choice with $\Lambda_+ \in \cG$ or $\Lambda_- \in \cG$, we get the same character for the choices $\Lambda_+'$ and $\Lambda_-'$, and so a factor of $1/2$ should be included in the generating function for the corresponding sums accounting for these terms.  On the other hand, when $\Lambda_+, \Lambda_- \in \cR$, we obtain two characters of the same degree corresponding to this choice, but this factor of $2$ when considering the resulting modified character degree is cancelled by the factor of $1/2$ in \eqref{ModCharDegSO2}.  This suggests that $\sF(z)$ should have the following form:
\begin{align}
\sF(z) & = \frac{1}{2} \left[ \left(\sum_{\Lambda_+ \in \cG} \delta(\Lambda_+) z^{|\Lambda_+|} \right) \left( \sum_{\Lambda_- \in \cR} \delta(\Lambda_-) z^{|\Lambda_-|} \right)  \right. \label{FFactor}\\ 
& \quad \quad +  \left(\sum_{\Lambda_+ \in \cR} \delta(\Lambda_+) z^{|\Lambda_+|} \right) \left( \sum_{\Lambda_- \in \cG} \delta(\Lambda_-) z^{|\Lambda_-|} \right) \nonumber \\
& \quad \quad \left. + \left( \sum_{\Lambda_+ \in \cG} \delta(\Lambda_+) z^{|\Lambda_+|} \right) \left( \sum_{\Lambda_- \in \cG} \delta(\Lambda_-) z^{|\Lambda_-|}\right) \right] \nonumber \\
& \quad \quad + \left( \sum_{\Lambda_+ \in \cR} \delta(\Lambda_+) z^{|\Lambda_+|} \right)\left( \sum_{\Lambda_- \in \cR} \delta(\Lambda_-) z^{|\Lambda_-|} \right). \nonumber
\end{align}
That is, all terms obtained in the above by taking non-constant terms in each sum account for the unipotent factor when there are both $1$ and $-1$ eigenvalues.  We now explain why \eqref{FFactor} also accounts for the cases that $(s)$ does not have both $1$ and $-1$ eigenvalues.  As already mentioned, for the case that $s$ has neither $1$ nor $-1$ eigenvalues we only need that $\sF(z)$ has constant term $2$.  This follows from the fact that each sum in \eqref{FFactor} indexed by $\cG$ has constant term $2$, and each indexed by $\cR$ has constant term $0$.  In the case that $s$ has either $1$ or $-1$ eigenvalues, but not both, we must consider the factor \eqref{ModCharDegSO1}.  We can have $\Lambda_+ \in \cR$ or $\Lambda_- \in \cR$, and these terms are obtained by considering the first two products of sums in \eqref{FFactor}, by taking the product of the terms in the sums of $\cR$ with the constant term for the sums over $\cG$.  Since the constant term for sums indexed by $\cG$ is $2$, the factor of $1/2$ in $\sF(z)$ yields coefficients given by the corresponding factor in \eqref{ModCharDegSO1}.  If $\Lambda_+ \in \cG$ or $\Lambda_- \in \cG$, these terms are obtained from the third product of sums in \eqref{FFactor} by taking the constant term in one factor, and again the $1/2$ in front gives terms in \eqref{FFactor}.  Note that since both sums in the fourth product of sums in \eqref{FFactor} has 0 as the constant term, all resulting term coefficients have already been accounted for in the case $s$ has both $1$ and $-1$ eigenvalues.  Thus $\sF(z)$ given by \eqref{FFactor} is the needed factor in the generating function.

From \eqref{FFactor} and the definitions of $\sG(z)$ and $\sR(z)$, we now have
$$\sF(z) = \frac{1}{2} \left[ 2 \sG(z) \cdot 2 \sR(z) + 2 \sR(z) \cdot 2\sG(z) + 2 \sG(z) \cdot 2 \sG(z))\right] + 2 \sR(z) \cdot 2 \sR(z)$$
$$ = 4 \sR(z)^2 + 4 \sG(z) \sR(z) +  \sG(z)^2 +  \sG(z)^2 = (2 \sR(z) + \sG(z))^2 + \sG(z)^2$$
$$ = \sV(z)^2 + \sG(z)^2.$$
The result in the case that $q$ is odd follows.
\end{proof}

The following result is obtained in \cite[Proposition 4.1]{V17}, which we state here since we will apply it in several calculations in the next section.

\begin{proposition} \label{OldResult}
We have the following identity of power series, where $e=e(q)$ is as in \eqref{edefn}:
\begin{align*}
\prod_{d \geq 1} & \left( \sum_{\lambda \in \cP} \delta_{\GU}(\lambda, 2d) z^{|\lambda| d} \right)^{N^*(q; 2d)}  \left (\sum_{\lambda \in \cP} \delta_{\GL}(\lambda, d) z^{|\lambda| d} \right)^{M^*(q; d)} \\
& = \frac{1}{1-z} \prod_{i \geq 1} \frac{(1 - z/q^{2i-1})^e}{1 -z^2/q^{2i}} \prod_{1 \leq i < j \atop{ i + j \text{ odd}}} (1 - z^2/q^{i+j})^e.
\end{align*}
\end{proposition}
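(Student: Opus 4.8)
The left-hand side is an Euler product indexed by the degrees of monic irreducible polynomials over $\F_q$, so the natural strategy is to (i) put the local factors $\sum_{\lambda}\delta_{\GL}(\lambda,d)z^{|\lambda|d}$ and $\sum_{\lambda}\delta_{\GU}(\lambda,2d)z^{|\lambda|d}$ into closed form, and then (ii) collapse the resulting product over $d$ using the zeta function of $\F_q[t]$, which is what turns the polynomial counts $M^*(q;d)$ and $N^*(q;2d)$ into exponents that can be summed.

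\textbf{Step (i).} Unwinding the definition together with the hook-length formula for the unipotent character degrees of $\GL$ (\cite[Section 13.8]{Ca85}; see also \cite[Sec.~3.3]{V17}) gives $\delta_{\GL}(\lambda,d)=q^{d\,n(\lambda)}/\prod_{c\in\lambda}(q^{d\,h(c)}-1)$, with $n(\lambda)=\sum_i(i-1)\lambda_i$ and $h(c)$ the hook length of $c$. Substituting the principal specialization $s_\lambda(1,t,t^2,\ldots)=t^{n(\lambda)}/\prod_c(1-t^{h(c)})$, with $t=q^{-d}$, into Littlewood's identity $\sum_\lambda s_\lambda(vx_1,vx_2,\ldots)=\prod_i(1-vx_i)^{-1}\prod_{i<j}(1-v^2x_ix_j)^{-1}$ (taking $x_i=t^{i-1}$, using the symmetry $\lambda\leftrightarrow\lambda'$ and $\sum_c h(c)=n(\lambda)+n(\lambda')+|\lambda|$) yields the clean product
$$\sum_{\lambda\in\cP}\delta_{\GL}(\lambda,d)\,z^{|\lambda|d}=\prod_{i\geq 1}\frac{1}{1-z^{d}/q^{di}}\ \prod_{1\leq i<j}\frac{1}{1-z^{2d}/q^{d(i+j)}}.$$
For $\GU$ one has $\delta_{\GU}(\lambda,2d)=q^{d\,n(\lambda)}/\prod_{c\in\lambda}\bigl(q^{d\,h(c)}-(-1)^{h(c)}\bigr)$; the corresponding generating function is obtained the same way, now via Ennola duality (equivalently, by expanding the factors $q^{d\,h(c)}-(-1)^{h(c)}$ through the $2$-core and $2$-quotient of $\lambda$), and carries factors $(1\mp z^{d}/q^{di})$ and $(1\mp z^{2d}/q^{d(i+j)})$ whose signs depend on the parities of $i$ and of $i+j$, together with the Ennola sign $(-1)^{n(\lambda')+|\lambda|}$ that must be tracked carefully.

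\textbf{Step (ii).} Once Step (i) is in hand, every factor of the left-hand side has the shape $(1\pm w^d)^{\mp N^*(q;2d)}$ or $(1-w^d)^{-M^*(q;d)}$, with $w=z/q^{i}$ for the single products and $w=z^2/q^{i+j}$ for the double products; so it remains to express $\prod_{d\geq 1}(1-w^d)^{-N^*(q;2d)}$ and $\prod_{d\geq 1}(1-w^d)^{-M^*(q;d)}$ in closed form. Both follow from $\prod_{\pi}(1-u^{\deg\pi})^{-1}=(1-qu)^{-1}$ (the product over all monic irreducibles of $\F_q[t]$) by separating the irreducibles into $\pi=t$, the self-dual ones (of degree $1$ there are $e$ of them --- namely $t-1$, and also $t+1$ when $q$ is odd --- of even degree $2d$ there are $N^*(q;2d)$, and none of odd degree $\geq 3$ by \cite[Lemma~1.3.16]{FNP}), and the dual pairs ($M^*(q;d)$ of them in degree $d$), together with the companion identity $\prod_{\pi\text{ self-dual}}(1-u^{\deg\pi})^{-1}=\frac{(1+u)^{e}}{1-qu^{2}}$ (the generating function for monic self-reciprocal polynomials over $\F_q$, an elementary count). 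Solving these two relations for the two products and substituting back, the single products assemble to $\frac{1}{1-z}\prod_{i\geq 1}(1-z/q^{2i-1})^{e}$, the double products to $\prod_{i\geq 1}(1-z^2/q^{2i})^{-1}\prod_{1\leq i<j,\ i+j\text{ odd}}(1-z^2/q^{i+j})^{e}$, and their product is the asserted right-hand side; the constant term is $1$ on both sides.

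\textbf{Main obstacle.} The conceptual steps are short, and the work is entirely in the bookkeeping: getting the $\GU$ local factor right (the Ennola sign, or equivalently the $2$-quotient expansion of the hook factors) and then, in the collapse, keeping straight which power of $q$ (exponent $2i-1$, $2i$, or $i+j$) each surviving term lands on, that the surviving double product is indexed exactly by $\{(i,j):1\leq i<j,\ i+j\text{ odd}\}$, and that the exponent $e$ appears uniformly --- this last point being precisely where the $e$ degree-$1$ self-dual polynomials, omitted from $\cN(q)'$ but needed to invoke the zeta identity, re-enter. A clean way to organize the computation is to carry out the collapse first with $N^*$, $M^*$ and the degree-$1$ self-dual count treated as formal symbols subject only to the two zeta relations, and only then specialize to $\F_q$.
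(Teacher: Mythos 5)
The paper itself does not prove this identity: it is quoted from \cite[Proposition 4.1]{V17}, so your attempt is in effect a reconstruction of that argument. Your overall plan (closed forms for the local factors, then a collapse over $d$ via zeta-type identities for the counts $N^*(q;2d)$ and $M^*(q;d)$) has the right shape, and your $\GL$ local factor is correct: Littlewood's identity at the specialization $x_i=z^d/q^{di}$ does give $\prod_{i\geq 1}(1-z^d/q^{di})^{-1}\prod_{i<j}(1-z^{2d}/q^{d(i+j)})^{-1}$. But there are two genuine gaps. The first is the $\GU$ local factor, which you never actually compute: after the hook-length manipulation the summand retains a partition-dependent sign (of the type $(-1)^{n(\lambda')}$) that cannot be absorbed into a geometric specialization $x_i=ct^{i-1}$, so the plain Littlewood identity you quote does not apply; one needs a signed analogue or the $2$-core/$2$-quotient expansion you allude to, and these signs are exactly what produce the exponents $2i-1$ and the constraint ``$i+j$ odd'' in the final answer. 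This is the heart of the computation, not deferrable bookkeeping.

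The second gap is in Step (ii). Your ``companion identity'' is false as stated: $\prod_{\pi\ \text{self-dual}}(1-u^{\deg\pi})^{-1}$ enumerates monic polynomials all of whose irreducible factors are self-dual, not all self-reciprocal polynomials (the latter may also contain factors $g g^*$ with $g\neq g^*$). For $q$ odd its $u^2$-coefficient is $3+N^*(q;2)=3+(q-1)/2$, while $(1+u)^2/(1-qu^2)$ has $u^2$-coefficient $q+1$. The correct relation is $\prod_{\pi\ \text{self-dual}}(1-u^{\deg\pi})^{-1}\prod_{\{g,g^*\}\in\cM(q)}(1-u^{2\deg g})^{-1}=(1+u)^e/(1-qu^2)$, and together with the ordinary zeta identity it does \emph{not} allow you to ``solve for the two products'': the Euler products $\prod_{d}(1-w^d)^{-N^*(q;2d)}$ and $\prod_d(1-w^d)^{-M^*(q;d)}$ are not rational functions individually (one only obtains a functional equation relating $w$ and $w^2$). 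What the two counting identities supply are the rational combinations $\prod_d(1-w^d)^{-N^*(q;2d)-M^*(q;d)}=(1-w)^e/(1-qw)$ and $\prod_d(1-w^{2d})^{-N^*(q;2d)}(1-w^d)^{-2M^*(q;d)}=(1-w)^{1+e}/(1-qw)$, so the collapse must be organized so that only these combinations ever occur at the arguments $w=\pm z/q^i$ and $w=z^2/q^{i+j}$ --- and whether that happens depends precisely on the signs in the $\GU$ local factor left unresolved in your Step (i). As written, therefore, Step (ii) cannot be executed, and the final ``assembly'' to the stated right-hand side is asserted rather than proved.
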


\section{Main results} \label{MainRs}

We may now prove the main results of the paper.  In the following, we apply the generating functions in the previous section to give an infinite product expression for $\sV(z)$, which was defined in \eqref{Vdefn} to have $1/2$ of the sum of all modified character degrees of unipotent characters of $\gO^{+}(2n, \F_q)$ and $\gO^{-}(2n, \F_q)$ as the coefficient of $z^n$ for $n>0$, and constant term $1$.

\begin{theorem} \label{GenFunUnipO}
For any prime power $q$, we have
$$\sV(z) = \prod_{i \geq 1} \frac{1 + z/q^{2i-1}}{1 - z/q^{2i-1}} \prod_{1 \leq i < j \atop{ i + j \text{ odd}}} \frac{1}{1 - z^2/q^{i+j}}.$$
\end{theorem}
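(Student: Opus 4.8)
The plan is to solve for $\sV(z)$ algebraically. In the case $q$ odd there are two descriptions of one and the same generating function: Proposition~\ref{GenFunO} produces it through the Jordan decomposition, with the unipotent contribution packaged as the factor $\sV(z)^{e}$, while Proposition~\ref{IndicatorsO}(i) --- which rests on Gow's theorem that $\gO^{\pm}(2n,\F_q)$ is totally orthogonal for $q$ odd --- produces it as an explicit infinite product. Equating these, using Proposition~\ref{OldResult} to evaluate the general linear and unitary part of the product, and extracting a square root will give the formula for odd $q$; a rationality-in-$q$ argument then removes the parity hypothesis.

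First, fix $q$ odd, so $e=e(q)=2$ by \eqref{edefn}. By Propositions~\ref{GenFunO} and~\ref{IndicatorsO}(i), the two power series
$$\prod_{d \geq 1} \left( \sum_{\lambda \in \cP} \delta_{\GU}(\lambda, 2d) z^{|\lambda| d} \right)^{N^*(q; 2d)} \left (\sum_{\lambda \in \cP} \delta_{\GL}(\lambda, d) z^{|\lambda| d} \right)^{M^*(q; d)} \sV(z)^2 \quad \text{and} \quad \frac{1}{1 - z} \prod_{i \geq 1} \frac{(1 + z/q^{2i-1})^2}{1 - z^2/q^{2i}}$$
each have constant term $1$ and, for every $n>0$, have coefficient of $z^{n}$ equal to $\frac{\Sigma(\gO^{+}(2n, \F_q))}{2(q^n -1) \prod_{i=1}^{n-1} (q^{2i} - 1) } + \frac{\Sigma(\gO^{-}(2n, \F_q))} {2(q^n + 1) \prod_{i=1}^{n-1} (q^{2i} - 1)}$; hence they coincide. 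Substituting the evaluation of the product over $d$ from Proposition~\ref{OldResult} (taken with $e=2$) and cancelling the invertible common factor $\frac{1}{1-z}\prod_{i\geq 1}\frac{1}{1-z^2/q^{2i}}$, this identity collapses to
$$\sV(z)^2 = \prod_{i \geq 1} \frac{(1 + z/q^{2i-1})^2}{(1 - z/q^{2i-1})^2} \prod_{1 \leq i < j \atop{ i + j \text{ odd}}} \frac{1}{(1 - z^2/q^{i+j})^2} = \left( \prod_{i \geq 1} \frac{1 + z/q^{2i-1}}{1 - z/q^{2i-1}} \prod_{1 \leq i < j \atop{ i + j \text{ odd}}} \frac{1}{1 - z^2/q^{i+j}} \right)^2 .$$
Call the series in the outer parentheses $P(z)$. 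Both $\sV(z)$ and $P(z)$ lie in $\mathbb{Q}[[z]]$ with constant term $1$, so $\sV(z)+P(z)$ is a unit; since $(\sV(z)-P(z))(\sV(z)+P(z))=0$ in the integral domain $\mathbb{Q}[[z]]$, we conclude $\sV(z)=P(z)$, which is the assertion of the theorem for odd $q$.

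It remains to pass to arbitrary prime powers. By the definition \eqref{Vdefn} and the explicit formula for $\delta(\Xi)$ recorded in Section~\ref{UnipotentO}, the coefficient of $z^{n}$ in $\sV(z)$ is a rational function of $q$ given by one and the same expression whether $q$ is even or odd, and expanding the geometric series likewise exhibits the coefficient of $z^{n}$ in $P(z)$ as a rational function of $q$. These two rational functions agree at every odd prime power by the previous paragraph, hence are equal, and so in particular agree for every prime power $q$. The only points demanding care above are routine bookkeeping: matching constant terms (so that agreement of the $z^{n}$-coefficients for $n>0$ together with equal constant terms yields equality of power series) and the extraction of the square root. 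The one genuinely structural point --- and the reason the hard input, Gow's theorem, is needed only for odd $q$ --- is that the unipotent character degrees of $\gO^{\pm}(2n,\F_q)$ are given by the same polynomials in $q$ in both characteristics; this is what makes the whole method run.
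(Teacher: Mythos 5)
Your proposal is correct and follows essentially the same route as the paper: equate the generating function of Proposition \ref{GenFunO} (with $e=2$) with that of Proposition \ref{IndicatorsO}(i) for odd $q$, substitute Proposition \ref{OldResult}, solve for $\sV(z)$, and transfer to even $q$ via the fact that the unipotent degree data $\delta(\Xi)$ are the same expressions in $q$ regardless of parity. Your square-root extraction (via $(\sV-P)(\sV+P)=0$ in $\mathbb{Q}[[z]]$ with $\sV+P$ a unit) and the explicit rational-function argument for the parity transfer are just slightly more careful phrasings of the paper's "coefficients must be positive" and "same expressions in $q$" remarks.
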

\begin{proof}  When $q$ is odd, we have by Propositions \ref{IndicatorsO}(i) and \ref{GenFunO} with $e=2$ that
\begin{align} 
\prod_{d \geq 1} & \left( \sum_{\lambda \in \cP} \delta_{\GU}(\lambda, 2d) z^{|\lambda| d} \right)^{N^*(q; 2d)} \left (\sum_{\lambda \in \cP} \delta_{\GL}(\lambda, d) z^{|\lambda| d} \right)^{M^*(q; d)} \sV(z)^2 \nonumber\\
& = \frac{1}{1-z} \prod_{i \geq 1} \frac{(1 + z/q^{2i-1})^2}{1 - z^2/q^{2i}}. \label{StartO}
\end{align}
Substituting the identity in Proposition \ref{OldResult} with $e=2$ into \eqref{StartO} and simplifying gives
$$\prod_{i \geq 1} (1 - z/q^{2i-1})^2 \prod_{1 \leq i < j \atop{ i + j \text{ odd}}} (1 - z^2/q^{i+j})^2 \cdot \sV(z)^2 = \prod_{i \geq 1} (1 + z/q^{2i-1})^2.$$
Solving for $\sV(z)$ and noting that the coefficients must be positive yields the claim when $q$ is odd.  Since the unipotent characters have degrees which are the same expressions in $q$ whether $q$ is even or odd, then the result also holds for $q$ even.
\end{proof}

We may immediately apply Theorem \ref{GenFunUnipO} to show that every irreducible character of  $\gO^{\pm}(2n, \F_q)$ has Frobenius-Schur indicator $1$ when $q$ is even.

\begin{theorem} \label{OisTO}
When $q$ is a power of $2$, the groups $\gO^{\pm}(2n, \F_q)$ are totally orthogonal for all $n$.
\end{theorem}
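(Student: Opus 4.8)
The plan is to combine Theorem~\ref{GenFunUnipO} with the generating function for modified character degree sums in Proposition~\ref{GenFunO}, and then to invoke the involution-count criterion in Proposition~\ref{IndicatorsO}(ii). Since $q$ is a power of $2$, we have $e = e(q) = 1$, so Proposition~\ref{GenFunO} tells us that for $n > 0$ the quantity
$$\frac{\Sigma(\gO^{+}(2n, \F_q))}{2(q^n -1) \prod_{i=1}^{n-1} (q^{2i} - 1) }  + \frac{\Sigma(\gO^{-}(2n, \F_q))} {2(q^n + 1) \prod_{i=1}^{n-1} (q^{2i} - 1)}$$
is the coefficient of $z^n$ in
$$\prod_{d \geq 1} \left( \sum_{\lambda \in \cP} \delta_{\GU}(\lambda, 2d) z^{|\lambda| d} \right)^{N^*(q; 2d)} \left (\sum_{\lambda \in \cP} \delta_{\GL}(\lambda, d) z^{|\lambda| d} \right)^{M^*(q; d)} \sV(z).$$

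First I would substitute the identity of Proposition~\ref{OldResult} (taken with $e=1$) for the product of the $\GU$- and $\GL$-sums, and the infinite-product formula of Theorem~\ref{GenFunUnipO} for $\sV(z)$, into this expression. The factors $\prod_{i \geq 1} (1 - z/q^{2i-1})$ and $\prod_{1 \leq i < j \atop{ i + j \text{ odd}}} (1 - z^2/q^{i+j})$ coming from Proposition~\ref{OldResult} cancel exactly against the denominator factors of $\sV(z)$, and what remains is
$$\frac{1}{1-z} \prod_{i \geq 1} \frac{1 + z/q^{2i-1}}{1 - z^2/q^{2i}}.$$
This is precisely the generating function appearing in Proposition~\ref{IndicatorsO}(ii), so the coefficient condition there is met and hence $\gO^{\pm}(2n, \F_q)$ is totally orthogonal for all $n \geq 1$ (the groups being undefined, hence nothing to prove, for $n=0$). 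The conclusion then follows from Lemma~\ref{CharSumLemma} via Proposition~\ref{IndicatorsO}(ii).

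I do not expect a genuine obstacle at this stage: the entire difficulty has been front-loaded into Theorem~\ref{GenFunUnipO}, whose proof leveraged Gow's theorem on total orthogonality of $\gO^{\pm}(2n, \F_q)$ in odd characteristic together with the characteristic-independence of unipotent character degrees to pin down $\sV(z)$ as an explicit infinite product valid for every prime power $q$. The only points needing care are bookkeeping: confirming that $e = 1$ is indeed the correct specialization for $q$ even in both Proposition~\ref{GenFunO} and Proposition~\ref{OldResult}, verifying that the constant terms agree (both sides have constant term $1$) so that the equality of power series is not an artifact of an index shift, and carrying out the cancellation of the $(1 - z/q^{2i-1})$ and $(1 - z^2/q^{i+j})$ factors cleanly. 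Once this cancellation is written out the theorem is immediate.
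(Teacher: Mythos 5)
Your proposal is correct and follows the paper's own proof exactly: substitute Proposition \ref{OldResult} (with $e=1$) and Theorem \ref{GenFunUnipO} into Proposition \ref{GenFunO}, cancel to obtain $\frac{1}{1-z} \prod_{i \geq 1} \frac{1 + z/q^{2i-1}}{1 - z^2/q^{2i}}$, and conclude via Proposition \ref{IndicatorsO}(ii). No differences worth noting.
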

\begin{proof}  By Propositions \ref{GenFunO} and \ref{OldResult} with $e=1$, together with Theorem \ref{GenFunUnipO}, we have that for $n>0$,
$$\frac{\Sigma(\gO^{+}(2n, \F_q))}{2(q^n -1) \prod_{i=1}^{n-1} (q^{2i} - 1) }  + \frac{\Sigma(\gO^{-}(2n, \F_q))} {2(q^n + 1) \prod_{i=1}^{n-1} (q^{2i} - 1)}$$
is the coefficient of $z^n$ in the generating function
$$ \frac{1}{1-z} \prod_{i \geq 1} \frac{1 - z/q^{2i-1}}{1 -z^2/q^{2i}} \prod_{1 \leq i < j \atop{ i + j \text{ odd}}} (1 - z^2/q^{i+j})\prod_{i \geq 1} \frac{1 + z/q^{2i-1}}{1 - z/q^{2i-1}} \prod_{1 \leq i < j \atop{ i + j \text{ odd}}} \frac{1}{1 - z^2/q^{i+j}}$$
$$ = \frac{1}{1-z} \prod_{i \geq 1} \frac{1 + z/q^{2i-1}}{1 - z^2/q^{2i}}.$$
The result now follows from Proposition \ref{IndicatorsO}(ii).
\end{proof}

R\"{a}m\"{o} proved \cite[Theorem 1.1]{Ra11} that when $q$ is even, all elements of $\SO^{\pm}(4m, \F_q)$ are strongly real.  In particular, since $\gO^{\pm}(4m, \F_q)$ is totally orthogonal and $\SO^{\pm}(4m, \F_q)$ is a real group, then the next result follows immediately from Lemma \ref{index2}(i).

\begin{theorem} \label{SO4misTO}
When $q$ is a power of $2$, the groups $\SO^{\pm}(4m, \F_q)$ are totally orthogonal for all $m$.
\end{theorem}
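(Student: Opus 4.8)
The plan is to obtain this as a quick consequence of Theorem \ref{OisTO}, R\"am\"o's classification of strongly real elements, and the index-two transfer principle of Lemma \ref{index2}(i). Fix $q$ a power of $2$ and $m \geq 1$, and set $G = \gO^{\pm}(4m, \F_q)$ and $H = \SO^{\pm}(4m, \F_q)$. First I would record the structural input from Section \ref{OrthoDef}: by \eqref{OSOindex2} we have $[G:H] = 2$ with $G = \langle H, h \rangle$, where the element $h$ displayed there is a transposition-type permutation matrix and hence satisfies $h^2 = 1$. Thus $G$ is a split extension of $H$ with an involutory complement $\langle h \rangle$, exactly the setup required by Lemma \ref{index2}. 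Applying Theorem \ref{OisTO} with $n = 2m$ shows that $G = \gO^{\pm}(4m, \F_q)$ is totally orthogonal.

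Next I would bring in the group-theoretic side. R\"am\"o proved in \cite[Theorem 1.1]{Ra11} that when $q$ is even every element of $\SO^{\pm}(4m, \F_q)$ is strongly real, i.e.\ a product of two involutions; in particular every element of $H$ is $H$-conjugate to its own inverse, so $H$ is a real group in the sense of Section \ref{IndicatorsPre}. Finally I would apply Lemma \ref{index2}(i) with this $G$, $H$, and $y = h$: since $G$ is totally orthogonal and $H$ is a real group, the lemma immediately yields that $H = \SO^{\pm}(4m, \F_q)$ is totally orthogonal, which is the assertion.

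There is no serious obstacle here: the genuine work has already been done in Theorem \ref{OisTO} (and, on the conjugacy side, in \cite{Ra11}), and the present statement is purely an assembly of those inputs through Lemma \ref{index2}(i). The only point deserving a moment's attention is the verification that the extension $G/H$ is split with an involutory complement, which is immediate from the explicit matrix $h$ in Section \ref{OrthoDef}. It is worth remarking that the analogue for $\SO^{\pm}(4m+2, \F_q)$ must fail in this form precisely because that group is not a real group, so Lemma \ref{index2}(i) does not apply; this is why the weaker Theorem \ref{SOIndicators1} is the correct statement in that case.
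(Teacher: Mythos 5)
Your proposal is correct and is essentially identical to the paper's own argument: the paper likewise deduces the result immediately from Theorem \ref{OisTO}, R\"am\"o's \cite[Theorem 1.1]{Ra11} showing $\SO^{\pm}(4m, \F_q)$ is (strongly) real for $q$ even, and Lemma \ref{index2}(i) applied to the index-two split extension \eqref{OSOindex2}. Your extra remarks on the split complement $\langle h \rangle$ and on why the $4m+2$ case requires the twisted statement are accurate but not needed beyond what the paper records.
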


We finally have the following result, which follows from Theorem \ref{SO4misTO} and the other cases of strongly real finite simple groups, summarized in \cite[Theorem 3.2]{TV17}.

\begin{theorem} \label{SimpleTO}
Let $G$ be a finite simple group.  Then $G$ is totally orthogonal if and only if $G$ is strongly real.
\end{theorem}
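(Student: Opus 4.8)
The plan is to deduce this from the classification of finite simple groups together with the (now completed) classifications of \emph{real} and of \emph{strongly real} finite simple groups, with the only genuinely new input being Theorem \ref{SO4misTO}.

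For the implication that total orthogonality implies strong reality, first observe that if $G$ is totally orthogonal then $\vep(\chi) = 1$ for every $\chi \in \Irr(G)$, so in particular every irreducible character of $G$ is real-valued and $G$ is a real group in the sense of Section \ref{IndicatorsPre}. By the Tiep--Zalesski classification \cite{TZ05} of real finite simple groups, combined with the results of \cite{VG10, Ra11} that every real finite simple group is already strongly real, it follows that $G$ is strongly real. This direction uses none of the generating-function machinery of the paper; it is a clean consequence of the surprising fact that, for simple groups, ``real'' and ``strongly real'' coincide.

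For the converse, suppose $G$ is strongly real. I would invoke the classification of strongly real finite simple groups as collected in \cite[Theorem 3.2]{TV17}: apart from a short explicit list of alternating groups and two sporadic groups, every strongly real finite simple group is a group of Lie type on a specific list, and one then verifies total orthogonality case by case. The alternating and sporadic cases can be read off their character tables. For the Lie type families, total orthogonality of the simple symplectic and orthogonal groups in odd characteristic is Gow's theorem \cite{Gow85} (with the passage to simple quotients as in \cite[Section 3]{TV17}, or via Lemma \ref{modZ} with $\iota = 1$), the case $\Sp(2n, \F_q)$ with $q$ a power of $2$ is \cite{V17}, and the remaining case $\Omega^{\pm}(4m, \F_q) = \SO^{\pm}(4m, \F_q)$ with $q$ a power of $2$ is precisely Theorem \ref{SO4misTO}. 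Since every entry on the list is thereby totally orthogonal, the converse holds.

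I expect the main obstacle to be the single family $\Omega^{\pm}(4m, \F_q)$ with $q$ even: every other strongly real finite simple group was already known to be totally orthogonal before this paper, so the entire weight of the theorem rests on Theorem \ref{SO4misTO}. That result is in turn obtained from Theorem \ref{OisTO} --- total orthogonality of the full orthogonal group $\gO^{\pm}(2n, \F_q)$ for $q$ even --- by applying Lemma \ref{index2}(i) together with R\"{a}m\"{o}'s theorem that $\SO^{\pm}(4m, \F_q)$ is a real group when $q$ is even; and Theorem \ref{OisTO} is exactly where the character-degree generating function computation, culminating in the infinite-product formula for $\sV(z)$ in Theorem \ref{GenFunUnipO}, is used. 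So once Theorem \ref{SO4misTO} is in hand, the proof of Theorem \ref{SimpleTO} itself is a bookkeeping assembly of the cited classifications.
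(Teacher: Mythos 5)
Your proposal is correct and follows essentially the same route as the paper: Theorem \ref{SimpleTO} is deduced there exactly as you describe, by combining the classification of real and strongly real finite simple groups from \cite{TZ05, VG10, Ra11} as summarized in \cite[Theorem 3.2]{TV17} with the previously known total orthogonality results, the only new ingredient being Theorem \ref{SO4misTO} (itself obtained from Theorem \ref{OisTO}, Lemma \ref{index2}(i), and R\"{a}m\"{o}'s reality result). Your identification of $\Omega^{\pm}(4m, \F_q)$ with $q$ even as the case carrying the full weight of the theorem matches the paper's own account.
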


The classification of all finite simple groups which are real is given by Tiep and Zalesski \cite{TZ05}, and the classification of strongly real finite simple groups completed in \cite{VG10, Ra11} yields that every real finite simple group is also strongly real.  We obtain the following interesting behavior of the Frobenius-Schur indicators of characters of finite simple groups.

\begin{cor} \label{SimpleFSCor}
Let $G$ be a finite simple group.  If $G$ has an irreducible character $\chi$ such that $\vep(\chi) = -1$, then $G$ also has an irreducible character which is not real-valued.
\end{cor}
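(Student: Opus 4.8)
The plan is to argue by contraposition, using the classification input recorded just before the corollary together with the main theorem. Suppose $G$ is a finite simple group \emph{all} of whose irreducible characters are real-valued; by the discussion in Section \ref{IndicatorsPre} this is precisely the statement that $G$ is a real group (the number of real-valued irreducible characters equals the number of real conjugacy classes, so all of $\Irr(G)$ is real-valued exactly when every conjugacy class of $G$ is real). I would then show that $\vep(\chi) = 1$ for every $\chi \in \Irr(G)$, which contradicts the hypothesis of the corollary.

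First I would invoke the classification of real finite simple groups of Tiep and Zalesski \cite{TZ05}, together with the classification of strongly real finite simple groups completed in \cite{VG10, Ra11}: comparing these lists shows that every real finite simple group is in fact strongly real. Hence the real simple group $G$ under consideration is strongly real. Next, apply Theorem \ref{SimpleTO}: a strongly real finite simple group is totally orthogonal, so $\vep(\chi) = 1$ for all $\chi \in \Irr(G)$. This contradicts the existence of an irreducible character $\chi$ with $\vep(\chi) = -1$. Therefore any finite simple group possessing such a $\chi$ cannot be a real group, i.e.\ it has an irreducible character which is not real-valued, as claimed.

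One point worth stating carefully in the write-up: a character $\chi$ with $\vep(\chi) = -1$ is itself real-valued (it is real but not orthogonal), so it is \emph{not} the non-real character being produced; the non-real character arises from the failure of $G$ to be a real group, which is exactly what the contrapositive delivers.

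I expect no genuine obstacle here: all the weight is carried by Theorem \ref{SimpleTO} and by the (classification-dependent) fact that real simple groups are strongly real. The only care needed is in correctly chaining the equivalences ``$G$ real $\iff$ all $\chi \in \Irr(G)$ real-valued'' and ``$G$ strongly real $\iff$ $G$ totally orthogonal'' for simple $G$, and in noting, as above, why the hypothesized $\chi$ is distinct from the non-real character in the conclusion.
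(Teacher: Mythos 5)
Your proposal is correct and follows essentially the same route as the paper: the paper also combines the classification-based fact that every real finite simple group is strongly real with Theorem \ref{SimpleTO} to conclude real simple groups are totally orthogonal, then takes the contrapositive. No gaps; your extra remark that the indicator $-1$ character is real-valued but distinct from the produced non-real character is a correct clarification, not a divergence.
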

\begin{proof}  Since every finite simple group which is real is also strongly real, then from Theorem \ref{SimpleTO} every real finite simple group is totally orthogonal.  In other words, there are no finite simple groups which are real and which have irreducible characters with Frobenius-Schur indicator $-1$.  The result follows.
\end{proof}

We may also find infinite product expressions for the generating functions $\sG(z)$ and $\sR(z)$ by applying Proposition \ref{GenFunSO} and Theorem \ref{GenFunUnipO}, as follows.

\begin{theorem} \label{GenFunSO}
For any prime power $q$, we have
$$ \sG(z) = \prod_{1 \leq i < j \atop{ i + j \text{ odd}}} \frac{1}{1 - z^2/q^{i+j}}, \text{ and  }$$
$$ \sR(z) = \left[ \left(\prod_{i \geq 1} \frac{1 + z/q^{2i-1}}{1 - z/q^{2i-1}} \right) - 1 \right] \prod_{1 \leq i < j \atop{ i + j \text{ odd}}} \frac{1}{1 - z^2/q^{i+j}}.$$
\end{theorem}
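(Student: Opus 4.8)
The plan is to pin down $\sG(z)$ first using the fact that degenerate symbols only exist in the $+$-type (defect $0$) world, then recover $\sR(z)$ from the relation $\sV(z) = \sR(z)_{\sa} + \sG(z)_{\sa} = 2\sR(z) + \sG(z)$ proved in Section~\ref{GenFuns}. Since Theorem~\ref{GenFunUnipO} already gives the infinite product for $\sV(z)$, once we know $\sG(z)$ the formula for $\sR(z)$ is immediate: $\sR(z) = \tfrac12(\sV(z) - \sG(z))$, and substituting the two products and simplifying produces exactly the claimed expression, with the leading $\left[\left(\prod_{i\ge 1}\frac{1+z/q^{2i-1}}{1-z/q^{2i-1}}\right)-1\right]$ coming from $\sV(z)$ losing its constant term $1$ after subtracting $\sG(z)$ (whose constant term is also $1$) and dividing by $2$ — but note $\sV$ has constant term $1$ and $\sG$ has constant term $1$, so this needs the extra factor-of-two bookkeeping to be checked carefully.

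**The real work** is computing $\sG(z) = \sum_{\Lambda \in \cG}\tfrac12\delta(\Lambda)z^{|\Lambda|}$. The strategy is to feed this into the generating function machinery the way $\sV(z)$ was: Proposition~\ref{GenFunSO} gives an infinite product for the combined generating function with unipotent factor $\sV(z)^e + \sG(z)^e$; for $q$ odd this is $\sV(z)^2 + \sG(z)^2$. Using Proposition~\ref{IndicatorsSO}(i), which identifies that same coefficient with an involution count, together with Proposition~\ref{OldResult} to expand the Gauss-sum-type product, we get an explicit closed form for $\sV(z)^2 + \sG(z)^2$. Since Theorem~\ref{GenFunUnipO} already supplies $\sV(z)$, we can solve algebraically for $\sG(z)^2$; the positivity of the coefficients of $\sG(z)$ (it is a sum of genuine character-degree contributions divided by $2$, hence has nonnegative coefficients) forces the choice of square root, giving the stated product $\prod_{1\le i<j,\, i+j \text{ odd}}(1-z^2/q^{i+j})^{-1}$. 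As with Theorem~\ref{GenFunUnipO}, because unipotent character degrees are the same polynomials in $q$ regardless of the parity of $q$, the identity for $\sG(z)$ established for $q$ odd holds for all prime powers $q$.

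**The main obstacle** is the arithmetic simplification in solving $\sV(z)^2 + \sG(z)^2 = (\text{explicit product})$ for $\sG(z)^2$: one must substitute the product from Proposition~\ref{OldResult} (with $e=2$), the product for $\sV(z)$ from Theorem~\ref{GenFunUnipO}, and the right-hand side of Proposition~\ref{IndicatorsSO}(i), then check that all the $\prod_{i\ge 1}(1\pm z/q^{2i-1})$ and $\prod_{i+j \text{ odd}}(1-z^2/q^{i+j})$ factors telescope to leave precisely $\sG(z)^2 = \prod_{1\le i<j,\,i+j\text{ odd}}(1-z^2/q^{i+j})^{-2}$. A subsidiary point to verify is that the square root is taken consistently with the constant term: $\sG(z)$ has constant term $1$ (the two rank-$0$ degenerate symbols $[\varnothing,\varnothing]$, $[\varnothing,\varnothing]'$ each contribute $\tfrac12\delta = \tfrac12$, summing to $1$), which matches the infinite product evaluated at $z=0$. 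Once $\sG(z)$ is in hand, deriving $\sR(z) = \tfrac12(\sV(z) - \sG(z))$ and simplifying to the displayed form is routine, using only that $\sV(z) = 2\sR(z) + \sG(z)$ from \eqref{SOunipsum} and factoring out the common $\prod_{i+j\text{ odd}}(1-z^2/q^{i+j})^{-1}$.
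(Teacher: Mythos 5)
Your derivation of $\sG(z)$ is exactly the paper's argument: for $q$ odd, equate the generating function for the modified character degree sums of $\SO^{\pm}(2n,\F_q)$ from Section \ref{GenFuns} (whose unipotent factor is $\sV(z)^2+\sG(z)^2$) with the involution generating function of Proposition \ref{IndicatorsSO}(i), expand via Proposition \ref{OldResult} with $e=2$, substitute the product for $\sV(z)$ from Theorem \ref{GenFunUnipO}, cancel, and take the square root forced by nonnegativity and the constant term $1$; the passage from odd $q$ to all $q$ by polynomiality of unipotent degrees is also the paper's step. No complaint there.

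The gap is in the $\sR(z)$ step, and the factor-of-two bookkeeping you flagged genuinely does not close. You (correctly, per the paper's definitions) use $\sR(z)_{\sa}=2\sR(z)$ and $\sG(z)_{\sa}=\sG(z)$, hence $\sV(z)=2\sR(z)+\sG(z)$, so your route gives $\sR(z)=\tfrac12\bigl(\sV(z)-\sG(z)\bigr)$, and substituting the two products yields $\tfrac12\bigl[\prod_{i\geq1}\tfrac{1+z/q^{2i-1}}{1-z/q^{2i-1}}-1\bigr]\prod\bigl(1-z^2/q^{i+j}\bigr)^{-1}$ (product over $1\leq i<j$ with $i+j$ odd), i.e.\ exactly \emph{half} of the displayed expression; ``simplifying to the displayed form'' is therefore not routine but impossible. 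A degree-one check makes this concrete: $\cR$ contains precisely two rank-one symbols, $[\{1\},\{0\}]$ and $[\{0,1\},\varnothing]$, with $\delta$ equal to $1/(q-1)$ and $1/(q+1)$, so the coefficient of $z$ in $\sR(z)$ as defined is $q/(q^2-1)$, whereas the coefficient of $z$ in the displayed product is $2q/(q^2-1)$. The paper's own proof obtains the second display from the relation ``$\sV(z)=\sR(z)+\sG(z)$'', which conflicts with $\sR(z)_{\sa}=2\sR(z)$ established just before \eqref{SOunipsum}; so what the displayed formula actually computes is $\sV(z)-\sG(z)=2\sR(z)=\sR(z)_{\sa}$. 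Carried out honestly, your argument proves the identity with $\sR(z)_{\sa}$ (equivalently $2\sR(z)$) on the left-hand side, and you should either state it in that form or insert the missing factor $\tfrac12$; as written, your final claim that $\tfrac12\bigl(\sV(z)-\sG(z)\bigr)$ equals the displayed expression is false.
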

\begin{proof} For $q$ odd, by Propositions \ref{GenFunSO} and \ref{OldResult} with $e=2$, together with Proposition \ref{IndicatorsSO}(i), we have
\begin{align*}
\frac{1}{1-z} \prod_{i \geq 1} & \frac{(1 - z/q^{2i-1})^2}{1 -z^2/q^{2i}} \prod_{1 \leq i < j \atop{ i + j \text{ odd}}} (1 - z^2/q^{i+j})^2 \cdot (\sV(z)^2 + \sG(z)^2) = \\
 & = \frac{1}{1-z} \prod_{i \geq 1} \frac{(1 + z/q^{2i-1})^2}{1 - z^2/q^{2i}} + \frac{1}{1-z} \prod_{i \geq 1} \frac{(1 - z/q^{2i-1})^2}{1 - z^2/q^{2i}}.
\end{align*}
After substituting in the expression in Theorem \ref{GenFunUnipO} for $\sV(z)$ and some simplification, we obtain
$$\prod_{i \geq 1} (1 - z/q^{2i-1})^2 \prod_{1 \leq i < j \atop{ i + j \text{ odd}}} (1 - z^2/q^{i+j})^2 \cdot \sG(z)^2 =  \prod_{i \geq 1} (1 - z/q^{2i-1})^2.$$
Solving for $\sG(z)$ gives the result for $q$ odd.  Since $\sG(z)$ is defined in terms of unipotent character degrees which are expressions in $q$ independent of the parity of $q$, the result holds for all $q$.  The expression for $\sR(z)$ is obtained from the fact that $\sV(z) = \sR(z) + \sG(z)$.
\end{proof}

We note that the expressions for $\sV(z)$ and $\sG(z)$ given in Theorems \ref{GenFunUnipO} and \ref{GenFunSO} strongly resemble the identity in \cite[Theorem 4.1]{V17} for the generating function for the sum of the modified character degrees of the unipotent characters of the symplectic and odd-dimensional special orthogonal groups (which we will apply in Section \ref{symplectic}).  There should exist some universal symmetric function proof of these identities, by utilizing the formulas for these modified character degrees in terms of hook-lengths of symbols given in \cite{Ol86, Ma95}.

Next, we describe the twisted Frobenius-Schur indicators for the groups $\SO^{\pm}(4m+2, \F_q)$.  The next result is proved in the case that $q$ is odd in \cite[Theorem 5.1(ii)]{TV17}, and so the statement holds for all $q$.  

\begin{theorem} \label{SOIndicators1}
Let $q$ be a power of $2$.  For any $m \geq 0$ and any irreducible character $\chi$ of $\SO^{\pm}(4m+2, \F_q)$, we have $\eps(\chi) = 1$ and $\vep(\chi) \geq 0$.
\end{theorem}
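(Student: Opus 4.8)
The plan is to run the generating-function computation of Sections~\ref{GenFuns} and~\ref{MainRs} once more, now specialized to the case $q$ even, so as to verify the equivalent condition recorded in Proposition~\ref{IndicatorsSO}(ii), and then to transfer the resulting statement about $\eps$ to the untwisted indicator $\vep$ by means of the index-$2$ Lemma~\ref{index2}(ii). Throughout $q$ is a power of $2$, so $e = e(q) = 1$; I note that the degree-sum generating functions genuinely depend on the parity of $q$ through $e$, even though the unipotent building blocks $\sV(z)$ and $\sG(z)$ have parity-independent closed forms by Theorems~\ref{GenFunUnipO} and~\ref{GenFunSO}.

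First I would invoke Proposition~\ref{GenFunSO}, which with $e=1$ states that
$$\frac{\Sigma(\SO^{+}(2n,\F_q))}{(q^n-1)\prod_{i=1}^{n-1}(q^{2i}-1)} + \frac{\Sigma(\SO^{-}(2n,\F_q))}{(q^n+1)\prod_{i=1}^{n-1}(q^{2i}-1)}$$
is the coefficient of $z^n$, $n>0$, in
$$\prod_{d\geq 1}\left(\sum_{\lambda\in\cP}\delta_{\GU}(\lambda,2d)z^{|\lambda|d}\right)^{N^*(q;2d)}\left(\sum_{\lambda\in\cP}\delta_{\GL}(\lambda,d)z^{|\lambda|d}\right)^{M^*(q;d)}\bigl(\sV(z)+\sG(z)\bigr).$$
Into the product over $d$ I substitute the identity of Proposition~\ref{OldResult} with $e=1$, and for the last factor I use Theorems~\ref{GenFunUnipO} and~\ref{GenFunSO} to write $\sV(z)+\sG(z) = \bigl(\prod_{i\geq 1}\frac{1+z/q^{2i-1}}{1-z/q^{2i-1}}+1\bigr)\,\sG(z)$, with $\sG(z) = \prod_{1 \leq i < j \atop{ i + j \text{ odd}}}\frac{1}{1-z^2/q^{i+j}}$. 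The product $\prod_{1 \leq i < j \atop{ i + j \text{ odd}}}(1-z^2/q^{i+j})$ produced by Proposition~\ref{OldResult} then cancels $\sG(z)$, and after distributing one is left with
$$\frac{1}{1-z}\prod_{i\geq 1}\frac{1+z/q^{2i-1}}{1-z^2/q^{2i}} + \frac{1}{1-z}\prod_{i\geq 1}\frac{1-z/q^{2i-1}}{1-z^2/q^{2i}},$$
which is exactly the generating function appearing in Proposition~\ref{IndicatorsSO}(ii). Hence the condition in that proposition is met, and it follows that for $q$ even the groups $\SO^{\pm}(4m,\F_q)$ are totally orthogonal (recovering Theorem~\ref{SO4misTO}) and that every $\chi\in\Irr(\SO^{\pm}(4m+2,\F_q))$ satisfies $\eps(\chi)=1$.

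It remains to deduce $\vep(\chi)\geq 0$. Since $\eps(\chi)=1$ for every $\chi\in\Irr(\SO^{\pm}(4m+2,\F_q))$, in particular $\eps(\chi)\in\{1,-1\}$ for every such $\chi$, which as recalled in Section~\ref{IndicatorsPre} is equivalent to $\sigma(g)$ being $\SO^{\pm}(4m+2,\F_q)$-conjugate to $g^{-1}$ for all $g$. By Theorem~\ref{OisTO} the group $\gO^{\pm}(4m+2,\F_q)$ is totally orthogonal, and by~\eqref{OSOindex2} we have $\gO^{\pm}(4m+2,\F_q)=\langle\SO^{\pm}(4m+2,\F_q),h\rangle$ with $h^2=1$ and conjugation by $h$ inducing $\sigma$. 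Thus Lemma~\ref{index2}(ii) applies with $G=\gO^{\pm}(4m+2,\F_q)$, $H=\SO^{\pm}(4m+2,\F_q)$, $y=h$, and $\iota=\sigma$, and it yields $\eps(\chi)=1$ and $\vep(\chi)\geq 0$ for all $\chi\in\Irr(\SO^{\pm}(4m+2,\F_q))$, which is the claim. Combined with~\cite[Theorem 5.1(ii)]{TV17} for $q$ odd, the statement then holds for every prime power $q$.

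I expect the only real work to lie in the bookkeeping of the generating-function simplification: tracking the $e=1$ specialization, the cancellation against $\sG(z)$, and the constant-term conventions; in particular, checking that the constant term $2$ of $\sV(z)+\sG(z)$ corresponds correctly to the two semisimple classes with no $1$ nor $-1$ eigenvalues accounted for in Proposition~\ref{IndicatorsSO}(ii), so that the proposition applies verbatim. Beyond this there is no genuinely new difficulty, since the Jordan decomposition for the disconnected groups $\gO^{\pm}(2n,\F_q)$ (Proposition~\ref{JordanO}), the unipotent-character combinatorics of Section~\ref{Unipotent}, and the symplectic-type identity of Proposition~\ref{OldResult} have already done the heavy lifting; the theorem is then obtained by assembling these in the $q$-even case and invoking the index-$2$ transfer Lemma~\ref{index2}(ii).
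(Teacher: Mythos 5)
Your proposal is correct and follows essentially the same route as the paper: specialize Propositions \ref{GenFunSO} and \ref{OldResult} to $e=1$, substitute the closed forms of $\sV(z)$ and $\sG(z)$ from Theorems \ref{GenFunUnipO} and \ref{GenFunSO} to recover exactly the generating function of Proposition \ref{IndicatorsSO}(ii), and then transfer to $\vep(\chi)\geq 0$ via Theorem \ref{OisTO} and Lemma \ref{index2}(ii). The algebraic simplification you describe (the cancellation of $\sG(z)$ against the product over $i<j$ with $i+j$ odd) is the same computation carried out in the paper's proof.
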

\begin{proof} By Propositions \ref{GenFunSO} and \ref{OldResult} with $e = 1$, we have that when $q$ is even and $n >0$, the expression 
$$\frac{\Sigma(\SO^{+}(2n, \F_q))}{(q^n -1) \prod_{i=1}^{n-1} (q^{2i} - 1) }  + \frac{\Sigma(\SO^{-}(2n, \F_q))} {(q^n + 1) \prod_{i=1}^{n-1} (q^{2i} - 1)}$$
is the coefficient of $z^n$ in the generating function
\begin{equation} \label{SOsimplify}
\frac{1}{1-z} \prod_{i \geq 1} \frac{1 - z/q^{2i-1}}{1 -z^2/q^{2i}} \prod_{1 \leq i < j \atop{ i + j \text{ odd}}} (1 - z^2/q^{i+j}) \cdot (\sV(z) + \sG(z)).
\end{equation}
Substituting the expressions for $\sV(z)$ in Theorem \ref{GenFunUnipO} and for $\sG(z)$ in Theorem \ref{GenFunSO}, \eqref{SOsimplify} becomes 
$$\frac{1}{1-z} \prod_{i \geq 1} \frac{1 + z/q^{2i-1}}{1 - z^2/q^{2i}} + \frac{1}{1-z} \prod_{i \geq 1} \frac{1 - z/q^{2i-1}}{1 - z^2/q^{2i}}.$$
The fact that $\eps(\chi) = 1$ for any irreducible character of $\SO^{\pm}(4m+2, \F_q)$, $m \geq 0$, now follows from Proposition \ref{IndicatorsSO}(ii).  In particular, we have $\sigma(h)$ is conjugate to $h^{-1}$ in $\SO^{\pm}(4m+2, \F_q)$ for every $h \in \SO^{\pm}(4m+2, \F_q)$.  Since $\gO^{\pm}(4m+2, \F_q)$ is totally orthogonal by Theorem \ref{OisTO}, it follows from Lemma \ref{index2}(ii) that $\vep(\chi) \geq 0$ for all irreducible characters $\chi$ of $\SO^{\pm}(4m+2, \F_q)$.
\end{proof}

\section{Symplectic groups} \label{symplectic}

As an application of the results in the previous section, in this section we give a shorter and more direct proof of the main result of \cite{V05} regarding the finite symplectic groups $\Sp(2n, \F_q)$ with $q$ odd.  For any $q$, define $\Sp_{2n}$ to be the collection of invertible transformations on $V = \overline{\F}_q^{2n}$ which stabilize the symplectic form $\langle \cdot, \cdot \rangle$ defined by 
$$ \langle v, w \rangle = {^\top v} \left( \begin{array}{rr}    & -I_n \\ I_n &   \end{array} \right) w.$$
Taking $F$ to be the standard Frobenius, define $\Sp(2n, \F_q) = \Sp_{2n}^F$ (and $\Sp(0, \F_q)$ is taken to be the group with $1$ element).  Now let 
$$ y = \left( \begin{array}{rr} I_n &    \\    & -I_n \end{array} \right),  $$
which is \emph{skew-symplectic}, meaning we have $\langle yv, yw \rangle = - \langle v, w \rangle$ for all $v, w \in V$.  Let $\iota$ be the order $2$ automorphism on $\Sp(2n, \F_q)$ defined by ${^\iota g} = y g y^{-1}$.  The result \cite[Theorem 1.3]{V05} states that when $q$ is odd, every irreducible complex character $\chi$ of $\Sp(2n, \F_q)$ satisfies $\epi(\chi) = 1$.  We give a condensed proof of this result by applying the methods and results obtained in the previous sections of this paper.  

It follows \cite[Proposition 3.2 and proof of Corollary 6.1]{V05} that the number of elements in $\Sp(2n, \F_q)$ which satisfy ${^\iota g} = g^{-1}$ when $q$ is odd is given by $\frac{|\Sp(2n, \F_q)|}{|\GL(n, \F_q)|}$, where $|\Sp(2n, \F_q)| = q^{n^2} \prod_{i = 1}^n (q^{2i}-1)$.  Now note that
\begin{equation} \label{SpGoal}
\sum_{n \geq 0} \frac{q^{n^2}}{|\GL(n, \F_q)|} z^n = \sum_{n \geq 0} \frac{z^n}{ \prod_{i = 1}^n (1 - 1/q^i)} = \prod_{i \geq 1} (1 - z/q^{i-1})^{-1},
\end{equation}
where the last equality follows from an identity of Euler (see \cite[(2.2.5)]{An76}).  Now, to show that $\epi(\chi) = 1$ for all irreducible $\chi$ of $\Sp(2n, \F_q)$ for all $n$, we must show that $\frac{\Sigma(\Sp(2n, \F_q))}{\prod_{i = 1}^n (q^{2i} - 1)}$ is the coefficient of $z^n$ in $\prod_{i \geq 1} (1 - z/q^{i-1})^{-1}$, where $\Sigma(\Sp(2n, \F_q))$ is the sum of the character degrees of $\Sp(2n, \F_q)$.  We now compute this character degree sum directly.

If $\bG = \Sp_{2n}$, then a dual group is given by $\bG^* = \SO_{2n+1}$, which may be defined as the group of invertible transformations on $\overline{\F}_q^{2n+1}$ which stabilize the quadratic form 
$$ Q_1(v) = x_1 x_{2n} + \cdots + x_n x_{n+1} + x_{2n+1}^2,$$
where $v = (x_1, \ldots, x_{2n+1})$.  Then $F^*$ can be taken to be the standard Frobenius, and $\bG^{*F^*} = \SO_{2n+1}^{F^*} = \SO(2n+1, \F_q)$.  We take $G = \Sp(2n, \F_q)$ and $G^* = \SO(2n+1, \F_q)$ for the rest of this section.

Let $(s)$ be any semisimple class of $G^*$.  In this case, the class is determined by a pair of functions 
\begin{equation} \label{SpSemiFunctions}
\Phi: \cN(q) \cup \cM(q) \rightarrow \Z_{\geq 0},  \quad \eta: \{ t+1 \} \rightarrow \{ +, - \}, 
\end{equation}
such that, writing $\Phi(f) = m_f$ for $f \in \cN(q)'$, $\Phi(\{ g, g^* \}) = m_g$ for $\{ g, g^* \} \in \cM(q)$, $\Phi(t + 1) = m_{+}$, $\Phi(t-1)=m_-$, and $\eta(t+1) = \eta$ , we have
\begin{equation} \label{SpSemiCond1}
|\Phi| := \sum_{f \in \cN'(q)} m_f \deg(f)/2 + \sum_{ \{g, g^* \} \in \cM(q) } m_g \deg(g) + m_+ + m_- = n,
\end{equation}
where
\begin{equation} \label{SpSemiCond2}
\eta= + \, \text{ if } \, m_{+} = 0, 
\end{equation}
which follows from \cite[Sec. 2.6 Case (C)]{Wall} and \cite[Proposition 16.8]{AuMiRo}.  The centralizer in $G^*$ of an element from this class has structure
\begin{align}
C_{G^*}(s) \cong \prod_{f \in \cN(q)'} \GU(m_f, \F&_{q^{\deg(f)/2}}) \times \prod_{ \{ g, g^*\}  \in \cM(q)} \GL(m_g, \F_{q^{\deg(g)}}) \label{CentSp}\\
& \times \gO^{\eta} (2m_+, \F_q) \times \SO (2m_- +1, \F_q), \nonumber
\end{align}
see \cite[Sec. 1B]{AuMiRo} and \cite[Lemma 2.2]{Ng10}.  Then we have
\begin{equation} \label{index} 
[G^*: C_{G^*}(s)]_{p'} = \left\{ \begin{array}{ll} \frac{\prod_{i=1}^{n}(q^{2i}-1)}{P_{\Sp}(s)} & \text{if } m_+ =0, \\ \frac{\prod_{i=1}^{n}(q^{2i}-1)}{2P_{\Sp}(s)} & \text{if } m_+ \neq 0, \end{array} \right.
\end{equation}
where 
\begin{align}
P_{\Sp}(s) = &\prod_{f \in \cN(q)'}  \prod_{i = 1}^{m_f} (q^{i \deg(f)/2} - (-1)^i) \prod_{ \{ g, g^*\} \in \cM(q)} \prod_{i = 1}^{m_g} (q^{i \deg(g)} - 1) \label{Psp} \\
& \cdot (q^{m_+} - \eta 1) \prod_{i = 1}^{m_+ - 1} (q^{2i} - 1) \cdot \prod_{i=1}^{m_-} (q^{2i} - 1), \nonumber
\end{align}
and the last two products involving $m_+$ or $m_-$ are taken to be $1$ if $m_+=0$ or $m_-=0$, respectively

The unipotent characters of $\SO(2n+1, \F_q)$ are parameterized by the symbols of Section \ref{UnipotentSO}, but with odd defect.  That is, we consider symbols which are a pair $\Upsilon = [\mu, \nu]$ with $\mu = (\mu_1 < \mu_2 < \cdots < \mu_r)$, $\nu = (\nu_1 < \nu_2 < \cdots <\nu_k)$, such that $\mu_1$ and $\nu_1$ are not both $0$, and such that the defect $r-k > 0$ is odd.  The rank $|\Upsilon|$ of such a symbol is still defined as in \eqref{Rank}, and the symbols of rank $n$ with odd defect parameterize the unipotent characters of $\SO(2n+1, \F_q)$.  If $\psi_{\Upsilon}$ is the unipotent character of $\SO(2n+1, \F_q)$ parameterized by $\Upsilon$, then the degree of $\psi_{\Upsilon}$ may be written as
$$ \psi_{\Upsilon}(1) = \prod_{i = 1}^{n} (q^{2i} - 1) \cdot \delta(\Upsilon),$$
where $\delta(\Upsilon)$ is an expression which we will not specify here (see \cite[Sec. 13.8]{Ca85}).  We will let $\cS_{\text{odd}}$ denote the set of all symbols of non-negative rank and odd defect.

Similar to Section \ref{UnipotentCent}, a unipotent character of \eqref{CentSp} is of the form
$$\psi = \bigotimes_{f \in \cN'(q)}  \psi_{\GU, \lambda_{f}} \otimes \bigotimes_{\{ g, g^*\} \in \cM(q) } \psi_{\GL, \lambda_{g}} \otimes \psi_{\Xi} \otimes \psi_{\Upsilon},$$
where $\psi_{\Xi}$ is a unipotent character of $\gO^{\eta}(2m_+, \F_q)$, so $\Xi$ is an orthogonal symbol with $|\Xi|=m_+$, and $\Upsilon$ is a symbol of odd defect with $|\Upsilon| = m_-$.  The degree of this character is given by
\begin{equation} \label{UniSpDeg}
\psi(1) = P_{\Sp}(s) \prod_{f \in \cN(q)'} \delta_{\GU}(\lambda_f, \deg(f)) \prod_{\{ g, g^*\} \in \cM(q)} \delta_{\GL}(\lambda_g, \deg(g)) \cdot \delta(\Xi) \delta(\Upsilon).
\end{equation}

Now let $\chi$ be an irreducible character of $\Sp(2n, \F_q)$ corresponding to the class of pairs $(s, \psi)$ from the Jordan decomposition map in Section \ref{JordanD}.  From \eqref{CharDegProp}, \eqref{index}, and \eqref{UniSpDeg}, we have
\begin{align*}
\frac{\chi(1)}{\prod_{i=1}^{2n} (q^{2i} - 1)} = \prod_{f \in \cN(q)'}  & \delta_{\GU}(\lambda_f, \deg(f)) \prod_{\{ g, g^*\} \in \cM(q)} \delta_{\GL}(\lambda_g, \deg(g)) \\ & \cdot \left\{ \begin{array}{ll} \delta(\Upsilon) & \text{if } m_+ = 0 ,\\ \frac{1}{2} \delta(\Xi) \delta(\Upsilon) & \text{if } m_+ \neq 0 \end{array} \right.
\end{align*}

Now define
$$ \sW(z) = \sum_{\Upsilon \in \cS_{\text{odd}}} \delta(\Upsilon) z^{|\Upsilon|},$$
which is taken to have constant term $1$, corresponding to the the only symbol $[(0), \varnothing]$ of rank $0$ and odd defect.  By essentially the same argument as in the proof of Proposition \ref{GenFunO}, we have that when $q$ is odd, $\frac{\Sigma(\Sp(2n, \F_q))}{\prod_{i=1}^n (q^{2i} - 1)}$ is the coefficient of $z^n$ in the generating function 
\begin{equation} \label{GenFunSp}
\prod_{d \geq 1} \left( \sum_{\lambda \in \cP} \delta_{\GU}(\lambda, 2d) z^{|\lambda| d} \right)^{N^*(q; 2d)} \left (\sum_{\lambda \in \cP} \delta_{\GL}(\lambda, d) z^{|\lambda| d} \right)^{M^*(q; d)} \sV(z) \sW(z).
\end{equation}
By \cite[Theorem 4.1]{V17}, we have
$$ \sW(z) = \prod_{i \geq 1} \frac{1 + z/q^{2i}}{1 - z/q^{2i-1}} \prod_{1 \leq i < j \atop{ i + j \text{ odd}}} \frac{1}{1 - z^2/q^{i+j}}.$$
Now substitute this, along with the identity from Proposition \ref{OldResult} with $e=2$, and the expression for $\sV(z)$ from Theorem \ref{GenFunUnipO}, into \eqref{GenFunSp} and simplify.  This gives us $\frac{\Sigma(\Sp(2n, \F_q))}{\prod_{i=1}^n (q^{2i}-1)}$ is the coefficient of $z^n$ in the generating function
\begin{align*}
\frac{1}{1-z} & \prod_{i \geq 1} \frac{1}{1 - z^2/q^{2i}} \prod_{i \geq 1} (1 + z/q^{2i}) \prod_{i \geq 1} (1 + z/q^{2i-1}) \\&  = \frac{1}{1-z} \prod_{i \geq 1} \frac{1}{(1 - z/q^i)(1 + z/q^i)} \prod_{i \geq 1}(1 + z/q^i) = \prod_{i \geq 1} (1 - z/q^{i-1})^{-1}.
\end{align*}
Since this matches \eqref{SpGoal}, we have now recovered the result that $\epi(\chi)=1$ for all irreducible characters $\chi$ of $\Sp(2n, \F_q)$ whenever $q$ is odd.

 \section{A twisted version of strong reality} \label{Twizted}

In this section we investigate a twisted version of strong reality in certain groups of Lie type in the following sense.  If $G$ is a finite group and $\iota$ is an automorphism of $G$ such that $\iota^2 = 1$, then we say $g \in G$ is \emph{strongly $\iota$-real} if there is an element $y \in G$ with the property $\iota(y) = y^{-1}$, such that $y$ conjugates $g$ to $\iota(g)^{-1}$.  This is equivalent to $g$ being the product of two elements, $g = y_1 y_2$, such that $\iota(y_i) = y_i^{-1}$ for $i = 1,2$.  We find that for a large family of classical and conformal groups $G$ over a finite field, there exists such an automorphism $\iota$ such that every element of $G$ is strongly $\iota$-real, and every $\chi \in \Irr(G)$ satisfies $\epi(\chi) = 1$.  

\subsection{Special orthogonal groups}
We consider the groups $G =\SO^{\pm}(4m+2, \F_q)$ with the order $2$ automorphism $\sigma$, where $\sigma(g) = hgh^{-1}$ and $h \in \gO^{\pm}(4m+2, \F_q) \setminus \SO^{\pm}(4m+2, \F_q)$.  As mentioned in Section \ref{InvolsSec}, it was proved in \cite[Theorem 5.1(ii)]{TV17} that when $q$ is odd we have $\eps(\chi) = 1$ for all $\chi \in \Irr(G)$, and in the case that $q$ is even this is obtained in Theorem \ref{SOIndicators1} above.  

When $q$ is odd, it follows from \cite[Lemma 4.7]{SFV16} that for any $g \in G = \SO^{\pm}(4m+2, \F_q)$, there exists some $k \in \gO^{\pm}(4m+2, \F_q) \setminus \SO^{\pm}(4m+2, \F_q)$ such that $k^2 = 1$ and $k g k^{-1} = g^{-1}$.  It follows that $y = hk \in G$, and so $\sigma(y) = kh = y^{-1}$, and $yg y^{-1} = \sigma(g)^{-1}$.  That is, when $q$ is odd we have every element of $G$ is strongly $\sigma$-real.  We now prove this statement for the case that $q$ is even, which we can obtain by slightly extending the methods of R\"{a}m\"{o} from \cite{Ra11}.

Let $q$ be a power of $2$, and take $\tilde{V} = \F_q^{2n}$.  We may take the symplectic group $\Sp(2n, \F_q)$ to be the group of transformations on $\tilde{V}$ which stabilize the non-degenerate symplectic form $\langle \cdot, \cdot \rangle$ defined by the form at the beginning of Section \ref{symplectic} but restricted to $\F_q$-scalars.  Then either of the orthogonal groups $\gO^{\pm}(2n, \F_q)$ may be embedded in $\Sp(2n, \F_q)$ as follows.  There exists a quadratic form $\tilde{Q}$ on $V$ for which $\gO^{\pm}(2n, \F_q)$ is the stabilizer, and such that $\tilde{Q}(v + w) = \tilde{Q}(v) + \tilde{Q}(w) + \langle v, w \rangle$ for all $v, w \in V$.  Given any $g \in \Sp(2n, \F_q)$, we say that a subspace $W$ of $\tilde{V}$ is \emph{symplectically indecomposable} with respect to $g$ if $W$ has no direct sum decomposition which is orthogonal with respect to $\langle \cdot, \cdot \rangle$, such that the summands are non-trivial and $g$-invariant.  Then $\tilde{V}$ can be orthogonally decomposed as $\tilde{V} = \oplus V_i$, into symplectically indecomposable $g$-invariant subspaces $V_i$.  As outlined in \cite[Section 1]{Gow81}, it follows from results of Huppert \cite{Hu80} that each $V_i$ must be of one of the following forms:
\begin{enumerate}
\item[(1)] $V_i$ is a (non-orthogonal) direct sum, $V_i = U_i \oplus W_i$, of degenerate totally isotropic $g$-invariant cyclic subspaces (with $\dim_{\F_q}(U_i) = \dim_{\F_q}(W_i)$), and $g$ has a single elementary divisor $(t-1)^j$ on both $U_i$ and $W_i$;
\item[(2)] $V_i$ is $g$-cyclic, such that $g$ has a single elementary divisor $f(t)^j$, where $f(t)$ is irreducible and self-reciprocal, and if $f(t) = t-1$ then $j$ is even;
\item[(3)] $V_i$ is $g$-cyclic, and $V_i$ is a (non-orthogonal) direct sum $V_i = U_i \oplus W_i$ of totally isotropic $g$-invariant subspaces (with $\dim_{\F_q}(U_i) = \dim_{\F_q}(W_i)$), such that $g$ has a single elementary divisor $f(t)^j$ on $U_i$, and a single elementary divisor $f^*(t)^{j}$ on $W_i$, such that $f(t)$ is irreducible and $f(t) \neq f^*(t)$.
\end{enumerate}
In case (1), we say that $V_i$ is \emph{bicyclic} with respect to $g$.  Given this decomposition $\tilde{V} = \oplus V_i$, we write $g_i = g|_{V_i}$ and $g = \oplus g_i$.  We note that when $g \in \gO^{\pm}(2n, \F_q)$, we have $g|_{V_i}$ stabilizes the quadratic form $\tilde{Q}$ restricted to $V_i$.  We write $\gO_{V_i}$ and $\SO_{V_i}$ for the orthogonal and special orthogonal groups on $V_i$ defined by the quadratic form $\tilde{Q}|_{V_i}$.  In particular, if $g \in \gO^{\pm}(2n, \F_q)$, then $g_i \in \gO_{V_i}$ for any $V_i$ in the decomposition above.

A key result we need is the following.  For any $g \in \gO^{\pm}(2n, \F_q)$ with $q$ even, we have $g \in \SO^{\pm}(2n, \F_q)$ if and only if $\mathrm{rank}(g+I)$ is even, by \cite[Proposition 3.2]{Ra11}.  We use this result along with other results of R\"{a}m\"{o} from \cite{Ra11} in the next result.

\begin{proposition} \label{sigmareal} If $q$ is a power of $2$, then every element of $\SO^{\pm}(4m+2, \F_q)$ is strongly $\sigma$-real.
\end{proposition}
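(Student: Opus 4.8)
The plan is to first turn the statement into an assertion about $\gO^{\pm}(4m+2,\F_q)$ alone, and then to run R\"am\"o's block-by-block reduction while tracking the coset of $\SO^{\pm}(4m+2,\F_q)$. Write $\sigma(g)=hgh^{-1}$ with $h^2=1$ and $h\in\gO^{\pm}(4m+2,\F_q)\setminus\SO^{\pm}(4m+2,\F_q)$, and put $G=\SO^{\pm}(4m+2,\F_q)$. For $y\in G$ one checks directly that $\sigma(y)=y^{-1}$ if and only if $k:=hy$ is an involution of $\gO^{\pm}(4m+2,\F_q)$, and that in that case $ygy^{-1}=\sigma(g)^{-1}$ if and only if $kgk^{-1}=g^{-1}$; since $y\in G$ forces $k\notin G$, the element $g$ is strongly $\sigma$-real precisely when there is an involution $k\in\gO^{\pm}(4m+2,\F_q)\setminus\SO^{\pm}(4m+2,\F_q)$ with $kgk^{-1}=g^{-1}$. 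This is the exact analogue of the fact \cite[Lemma 4.7]{SFV16} used in the odd case, now required for $q$ even. For $g\in\gO^{\pm}(2n,\F_q)$ set $d(g)=\mathrm{rank}(g+I_{2n})$, so that by \cite[Proposition 3.2]{Ra11} we have $g\in\SO^{\pm}(2n,\F_q)$ if and only if $d(g)$ is even, and $d(\bigoplus_i g_i)=\sum_i d(g_i)$ for any orthogonal direct sum decomposition. The goal is thus: for each $g\in\SO^{\pm}(4m+2,\F_q)$, produce an involution $k$ with $kgk^{-1}=g^{-1}$ and $d(k)$ odd.

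Following R\"am\"o, I would embed $g$ in $\Sp(4m+2,\F_q)$ and decompose $\tilde V=\bigoplus_i V_i$ into symplectically indecomposable $g$-invariant subspaces of types (1)--(3) above, so $g=\bigoplus_i g_i$ with $g_i\in\gO_{V_i}$ and every $\dim_{\F_q}V_i$ even. On each summand R\"am\"o's construction produces an involution $k_i\in\gO_{V_i}$ with $k_i g_i k_i^{-1}=g_i^{-1}$. The key local claim is that on each block one can choose such a $k_i$ with $d(k_i)\equiv\tfrac12\dim_{\F_q}V_i\pmod 2$. For the $\GL$-type blocks of case (3) and the self-reciprocal irreducible cyclic blocks of case (2), the centralizer $C_{\gO_{V_i}}(g_i)$ is contained in $\SO_{V_i}$, so $d(k_i)\bmod 2$ is independent of the choice of reversing involution, and reading it off R\"am\"o's explicit matrices gives exactly $\tfrac12\dim_{\F_q}V_i\bmod 2$. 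For the bicyclic blocks of case (1) one may instead take $k_i$ to interchange the two cyclic summands and thereby realize either parity; and for a single unipotent Jordan block of even size $j$ (case (2) with $f(t)=t-1$), the reversing involutions form one coset of $C_{\gO_{V_i}}(g_i)$ in $\gO_{V_i}$, and since this centralizer already contains $g_i$ with $d(g_i)=j-1$ odd and $g_i^2$ with $d(g_i^2)=j-2$ even, both parities of $d(k_i)$ again occur. Granting the claim and putting $k=\bigoplus_i k_i$, we obtain $kgk^{-1}=g^{-1}$ and
$$ d(k)=\sum_i d(k_i)\equiv\sum_i\tfrac12\dim_{\F_q}V_i=\tfrac12(4m+2)=2m+1\pmod 2, $$
which is odd, so $k\notin\SO^{\pm}(4m+2,\F_q)$ and $g$ is strongly $\sigma$-real.

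The bulk of the work, and the main obstacle, is the local claim just stated: for each symplectically indecomposable block one must exhibit a reversing involution and pin down the parity of $d(k_i)$ together with the freedom available in it. For the blocks already occurring in R\"am\"o's treatment of $\Omega^+(4m,\F_q)$ this amounts to inspecting his matrices; the point that genuinely requires care is the single even unipotent Jordan block, where one verifies that a reversing involution exists in the disconnected group $\gO_{V_i}$ and that, in contrast to the cyclic self-reciprocal and $\GL$-type blocks, its centralizer meets both cosets of $\SO_{V_i}$. Once these local facts are in place, the parity identity displayed above --- in which the hypothesis $\dim\equiv 2\pmod 4$ is used --- is what pushes $k$ into $\gO^{\pm}(4m+2,\F_q)\setminus\SO^{\pm}(4m+2,\F_q)$.
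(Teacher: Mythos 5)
Your argument is correct and is essentially the paper's proof: the same reduction to producing an involution $k\in\gO^{\pm}(4m+2,\F_q)\setminus\SO^{\pm}(4m+2,\F_q)$ with $kgk^{-1}=g^{-1}$, the same symplectically indecomposable decomposition with R\"am\"o's block-level reversing involutions, and an equivalent parity count via \cite[Proposition 3.2]{Ra11} (the paper bookkeeps by noting there is an odd number of blocks with $\dim V_i\equiv 2\pmod 4$ and choosing an odd number of the $k_i$ outside $\SO_{V_i}$, rather than matching $d(k_i)\equiv\tfrac12\dim_{\F_q} V_i\pmod 2$ on every block, but the two counts are interchangeable). One caution on your local claims: for the bicyclic blocks, a swap of the two cyclic summands need not preserve the quadratic form in characteristic $2$ and in any case would give only the parity $\tfrac12\dim_{\F_q}V_i$, not ``either parity,'' so the statement you actually need there is precisely \cite[Propositions 3.5, 3.16, 3.17]{Ra11}, which is what the paper cites (your centralizer-parity observation for the other cyclic blocks and the $k\mapsto kg_i$ trick for the even unipotent Jordan block are fine, though they too ultimately lean on \cite[Proposition 3.3]{Ra11} for existence).
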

\begin{proof}  It is enough to show that for any $g \in \SO^{\pm}(4m+2, \F_q)$, there is some $k \in \gO^{\pm}(4m+2, \F_q) \setminus \SO^{\pm}(4m+2, \F_q)$ such that $k^2 = 1$ and $k$ conjugates $g$ to $g^{-1}$.

We take $\tilde{V} = \oplus V_i$ as in the decomposition with respect to $g$ given above, and $g = \oplus g_i$.  We have $\dim(\tilde{V}) = 4m+2$, and each $V_i$ is either cyclic or bicyclic with respect to $g$.  Note that we must have each $\dim(V_i)$ even, since otherwise $V_i$ is cyclic of odd dimension which is not possible.  We consider each possibility for $V_i$.

First suppose $4|\dim(V_i)$.  If $V_i$ is cyclic, then by \cite[Proposition 3.3]{Ra11} there is an involution $k_i \in \SO_{V_i}$ which inverts $g_i$ (that is, which conjugates $g_i$ to $g_i^{-1}$).  If $V_i$ is bicyclic, then by \cite[Proposition 3.5]{Ra11} there is an involution $k_i \in \SO_{V_i}$ which inverts $g_i$.

Now suppose $\dim(V_i) \equiv 2($mod $4)$.  If $V_i$ is cyclic, then by \cite[Proposition 3.3]{Ra11}, there is an involution $k_i \in \gO_{V_i} \setminus \SO_{V_i}$ which inverts $g_i$.  If $V_i$ is bicyclic, then by \cite[Propositions 3.16 and 3.17]{Ra11}, there exists an involution $k_i \in \SO_{V_i}$ and an involution $k_i' \in \gO_{V_i} \setminus \SO_{V_i}$, each of which inverts $g_i$.

Since $\dim(\tilde{V})=4m+2$, then there are an odd number of $V_i$'s such that $\dim(V_i) \equiv 2($mod $4)$.  So, when taking $k = \oplus k_i$, we may choose an odd number of the $k_i$'s such that $\mathrm{rank}(k_i + I_{V_i})$ is odd.  Thus $\mathrm{rank}(k + I)$ is odd, and so $k$ inverts $g$,  $k^2 = 1$, and $k \in \gO^{\pm}(4m+2, \F_q) \setminus \SO^{\pm}(4m+2, \F_q)$.
\end{proof}

\subsection{Conformal groups} 
Now take $V=\overline{\F}_q^{2n}$, and let $Q$ be the quadratic form on $V$ as defined in Section \ref{OrthoDef}.  The group of transformations in $\GL_{2n}$ which stabilize $Q$ up to a scalar factor is denoted $\CO_{2n} = \CO_{2n}(Q) = \CO(2n, \overline{\F}_q)$, and is called the \emph{conformal orthogonal group} with respect to $Q$.  So for each $g \in \CO_{2n}$, there is a scalar $\beta(g) \in \overline{\F}_q^{\times}$ such that, for all $v \in V$, we have $Q(gv) = \beta(g) Q(v)$.  The function 
$$ \beta: \CO_{2n} \rightarrow \overline{\F}_q^{\times} $$
is a multiplicative character, called the \emph{similitude} character.  Thus $\gO_{2n} = \mathrm{ker}(\beta)$.

Like the orthogonal group $\gO_{2n}$, the group $\CO_{2n}$ is also disconnected, and the connected component $\CO^{\circ}_{2n}$ is the \emph{special conformal orthogonal group}, also denoted $\CSO_{2n} = \CSO(2n, \overline{\F}_q)$.  We may again define an automorphism $\sigma$ on $\CO_{2n}$ and $\CSO_{2n}$, which is defined by conjugation by the element $h \in \gO_{2n} \setminus \SO_{2n}$ specified in Section \ref{OrthoDef}.  If $F$ is the standard Frobenius endomorphism, define $\tilde{F} = \sigma F$ to be a twisted Frobenius map on $\CO_{2n}$ and $\CSO_{2n}$.  Then we define the finite groups
$$ \CO^+(2n, \F_q) = \CO_{2n}^F, \quad \CSO^+(2n, \F_q) = \CSO_{2n}^F, $$
$$\CO^-(2n, \F_q) = \CO_{2n}^{\tilde{F}}, \quad \text{and} \quad \CSO^-(2n, \F_q) = \CSO_{2n}^{\tilde{F}}.$$
Then $\CSO^{\pm}(2n, \F_q)$ is an index 2 subgroup of $\CO^{\pm}(2n, \F_q)$.  We note that the groups $\CSO^{\pm}(2n, \F_q)$ modulo their center, $\PCSO^{\pm}(2n, \F_q)$, are the finite groups of Lie type corresponding to the simple algebraic groups of adjoint type of type $D_n$ (for $+$-type) and ${^2 D_n}$ (for $-$-type).  

The similitude character $\beta$ restricts to a character on the groups $\CO^{\pm}(2n, \F_q)$ and $\CSO^{\pm}(2n, \F_q)$, with kernel $\gO^{\pm}(2n, \F_q)$ and $\SO^{\pm}(2n, \F_q)$, respectively, and image $\F_q^{\times}$.  In particular, $\gO^{\pm}(2n, \F_q)$  and  $\SO^{\pm}(2n, \F_q)$ are normal subgroups of index $q-1$ in $\CO^{\pm}(2n, \F_q)$ and $\CSO^{\pm}(2n, \F_q)$, respectively.  Note that for any scalar matrix $\alpha I$ with $\alpha \in \F_q^{\times}$, we have $\alpha I \in \CO^{\pm}(2n, \F_q)$ with $\beta(\alpha I) = \alpha^2$.  In particular, if we denote by $Z$ the collection of such scalar matrices, we have $\beta(Z)$ is the set of squares in $\F_q^{\times}$.  If $q$ is even, then every $\alpha \in \F_q^{\times}$ is a square, and so we have $\beta(Z) = \F_q^{\times}$.  It follows that when $q$ is even, we have the directs products
$$ \CO^{\pm}(2n, \F_q) = Z \times \gO^{\pm}(2n, \F_q), \quad \CSO^{\pm}(2n, \F_q) = Z \times \SO^{\pm}(2n, \F_q).$$ 
In the case that $q$ is odd, we have
$$ Z \cdot \gO^{\pm}(2n, \F_q) \quad \text{and} \quad Z \cdot \SO^{\pm}(2n, \F_q)$$
are the index 2 subgroups of $\CO^{\pm}(2n, \F_q)$ and $\CSO^{\pm}(2n, \F_q)$, respectively, consisting of elements with square similitude.  We mention that in the case $q$ is odd, we may also describe $\CSO^{\pm}(2n, \F_q)$ as
$$\CSO^{\pm}(2n, \F_q) = \{ g \in \CO^{\pm}(2n, \F_q) \, \mid \, \det(g) = \beta(g)^n \},$$
and for any $q$ we have $Z \leq \CSO^{\pm}(2n, \F_q)$.

Define an automorphism $\iota$ on $\CSO^{\pm}(2n, \F_q)$ as follows.  If $n=2m$ is even, then define 
$$ \iota(g) = \beta(g)^{-1} g,$$
while if $n = 2m+1$ is odd, define
$$ \iota(g) = \beta(g)^{-1} \sigma(g) = \beta(g)^{-1} hgh^{-1}.$$
On the group $\CO^{\pm}(2n, \F_q)$, we let $\iota$ denote the automorphism $\iota(g) = \beta(g)^{-1} g$ for any $n$.

\begin{proposition}  \label{CSOtwist} Let $q$ be any prime power, and let $\iota$ be the automorphism of $G=\CSO^{\pm}(2n, \F_q)$ or $G=\CO^{\pm}(2n, \F_q)$ defined above.  Then every element of $G$ is strongly $\iota$-real, and any $\chi \in \Irr(G)$ satisfies $\epi(\chi) = 1$.
\end{proposition}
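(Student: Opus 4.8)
The plan is to prove the two assertions --- that every element of $G$ is strongly $\iota$-real, and that $\epi(\chi)=1$ for all $\chi\in\Irr(G)$ --- separately, and in each case to split according to the parity of $q$, reducing everything to the orthogonal and special orthogonal results already established. For $q$ even the argument is short: one uses the direct products $\CO^{\pm}(2n,\F_q)=Z\times\gO^{\pm}(2n,\F_q)$ and $\CSO^{\pm}(2n,\F_q)=Z\times\SO^{\pm}(2n,\F_q)$ with $Z\cong\F_q^{\times}$, and checks from the defining formulas that $\iota$ acts on $Z$ as inversion and on the orthogonal factor as the identity (on $\gO^{\pm}$ always, and on $\SO^{\pm}$ when $n$ is even) or as $\sigma$ (on $\SO^{\pm}$ when $n$ is odd). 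Every element of $Z$ is automatically inverted by $\iota$, so strong $\iota$-reality reduces to the orthogonal factor: for $\gO^{\pm}(2n,\F_q)$ this follows by choosing, in the indecomposable decomposition used in the proof of Proposition~\ref{sigmareal}, every $k_i$ inside $\gO_{V_i}$; for $\SO^{\pm}(4m,\F_q)$ it is \cite[Theorem~1.1]{Ra11}; for $\SO^{\pm}(4m+2,\F_q)$ it is Proposition~\ref{sigmareal} itself. For the indicator statement, $\epi$ is multiplicative on direct products; on the cyclic group $Z$ with $\iota$ equal to inversion every one-dimensional character has twisted indicator $1$, since any nonzero bilinear form on a line is symmetric and is invariant in the sense of \eqref{BiFormInd}; and on the orthogonal factor the relevant indicator is $\vep=1$ by Theorems~\ref{OisTO} and \ref{SO4misTO}, or $\eps=1$ by Theorem~\ref{SOIndicators1}.

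For $q$ odd one instead has $Z\cap\gO^{\pm}(2n,\F_q)=\{\pm I\}$, so $M:=Z\cdot\gO^{\pm}(2n,\F_q)$ (resp. $M:=Z\cdot\SO^{\pm}(2n,\F_q)$) is an index~$2$ subgroup of $G$ and is a quotient of $Z\times\gO^{\pm}(2n,\F_q)$ (resp. $Z\times\SO^{\pm}(2n,\F_q)$) by the diagonally embedded $\{\pm I\}$; its irreducible characters are the tensor products $\chi_1\otimes\chi_2$ whose factors agree on $-I$, and $\iota$ restricts to $M$ as inversion on $Z$ times the identity or $\sigma$ on the orthogonal factor. Pulling the indicator sum back through the two-to-one cover, multiplicativity still applies to these tensor characters, so by Gow's theorems \cite[Theorems~1 and 2]{Gow85} (that $\gO^{\pm}(2n,\F_q)$ and $\SO^{\pm}(4m,\F_q)$ are totally orthogonal) together with \cite[Theorem~5.1(ii)]{TV17} for $\SO^{\pm}(4m+2,\F_q)$ one gets $\epi(\rho)=1$ for every $\rho\in\Irr(M)$; moreover every $\iota$-fixed real-valued $\rho\in\Irr(M)$ satisfies $\vep(\rho)=1$, since on such a tensor both classical factor-indicators equal $1$ (a real character of an abelian group having classical indicator $1$, and on $\SO^{\pm}(4m+2,\F_q)$ a real-valued character having indicator $\ge 0$ by Theorem~\ref{SOIndicators1} hence $1$). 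Strong $\iota$-reality for $M$ follows exactly as in the even case.

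It remains to pass from $M$ to $G$ when $q$ is odd, and this is where the main work lies: $\iota$ is \emph{not} an inner automorphism of $G$ --- one checks that an element conjugating $\gO^{\pm}(2n,\F_q)$ trivially must be scalar, hence of square similitude, so it cannot lie in $G\setminus M$ --- so the index~$2$ Clifford shortcut of Lemma~\ref{index2} is unavailable and one must argue directly. The route I would follow is Lemma~\ref{CharSumLemma}: compute $\#\{g\in G:\iota(g)=g^{-1}\}$ and match it with $\Sigma(G)$. For $\CO^{\pm}$ (and $\CSO^{\pm}$ with $n$ even) the condition $\iota(g)=g^{-1}$ reads $g^{2}=\beta(g)I$, and the count splits over $\gamma=\beta(g)$: each square $\gamma=\mu^{2}$ contributes $I(\gO^{\pm}(2n,\F_q))$ copies via $g\mapsto\mu^{-1}g$, while each non-square $\gamma$ forces $g$ to be semisimple with eigenvalues a Frobenius-conjugate pair $\pm\mu\in\F_{q^{2}}\setminus\F_q$, whose $\mu$-eigenspace is a nondegenerate $\F_{q^{2}}$-subspace of dimension $n$ meeting its Frobenius twist trivially and orthogonally --- a standard count of unitary type. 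Assembling the resulting generating function in $z$ and comparing it, via the Jordan decomposition of the characters of the conformal group and the methods of Sections~\ref{GenFuns} and \ref{MainRs} (so that $\Sigma(G)$ reduces to $\Sigma(\gO^{\pm}(2n,\F_q))$ and thence to Theorem~\ref{GenFunUnipO}), gives the equality of Lemma~\ref{CharSumLemma}. For $\CSO^{\pm}$ with $n$ odd one replaces $g^{2}=\beta(g)I$ by $\sigma(g)g=\beta(g)I$ and runs the same comparison, now landing on $\SO^{\pm}(4m+2,\F_q)$ and Theorem~\ref{SOIndicators1}. Equivalently one can bypass the count by recognizing $\epi(\chi)=\frac1{|G|}\sum_{g\in G}\overline{\lambda_{\chi}(g)}\,\chi(g^{2})$, where $\lambda_{\chi}$ is the linear character of $G$ obtained by composing $\beta\colon G\to\F_q^{\times}$ with the central character $\alpha\mapsto\chi(\alpha I)/\chi(1)$ of $\F_q^{\times}$; this is the Frobenius--Schur indicator of $\chi$ twisted by the linear character $\lambda_{\chi}$, whose restriction to $M$ is controlled by the previous paragraph, and Clifford theory for $M\le G$ then forces the value $+1$.

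Finally, for strong $\iota$-reality of $G$ when $q$ is odd, I would for each $g\in G$ with $\beta(g)=\gamma$ seek $y\in G$ with $y^{2}=\beta(y)I$ (equivalently $\iota(y)=y^{-1}$) and $ygy^{-1}=\iota(g)^{-1}$, decomposing $V$ into $g$-indecomposable $Q$-orthogonal subspaces exactly as in Rämö's analysis and treating each piece by a conformal involution inverting $g_i$ up to the similitude factor (elements such as $\left(\begin{smallmatrix}0&I\\ \gamma I&0\end{smallmatrix}\right)$ provide the necessary building blocks), then assembling with attention to the determinant and similitude constraints cutting out $\CSO^{\pm}$; alternatively one invokes \cite[Lemma~4.7]{SFV16} in the spirit of the $q$-odd half of Proposition~\ref{sigmareal}. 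The genuinely new ingredient, and the main obstacle throughout, is the $q$ odd case --- and within it the non-innerness of $\iota$ on $G$, which is what compels either the explicit involution count or the $\lambda_{\chi}$-twisted Frobenius--Schur reinterpretation rather than a bare appeal to Lemma~\ref{index2}, together with the conformal strengthening of Wonenburger's inversion theorem and the bookkeeping of the similitude factor. Once Proposition~\ref{CSOtwist} is in hand, Lemma~\ref{modZ} immediately yields the corresponding statement for the adjoint groups $\PCSO^{\pm}(2n,\F_q)$.
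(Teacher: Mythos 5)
Your $q$ even argument matches the paper's, but for $q$ odd the proposal does not actually close either half of the statement, and it misses the argument the paper uses. For strong $\iota$-reality and for the indicators of $\CO^{\pm}(2n,\F_q)$ with $q$ odd, the paper simply invokes existing results (\cite[Theorems 1 and 2]{V06} and, for $\CSO^{\pm}$-reality, \cite[Proposition 7.2]{RV18}); your plan to re-derive the reality statement by a R\"am\"o-style decomposition with ``conformal involutions'' leaves precisely the hard bookkeeping (non-square similitude $\beta(g)$, the determinant/similitude constraints cutting out $\CSO^{\pm}$) unproved, and your Lemma~\ref{CharSumLemma} route for the remaining indicator case would require extending the entire machinery of Sections~\ref{CentsSemi}--\ref{GenFuns} (semisimple centralizers, their unipotent characters, and a Jordan decomposition) to the conformal groups, plus a count of the elements with $g^2=\beta(g)I$ of non-square similitude matched against $\Sigma(\CSO^{\pm}(2n,\F_q))$ --- none of which is carried out, and which is a substantially larger project than the proposition itself. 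Likewise the closing claim that ``Clifford theory for $M\le G$ then forces the value $+1$'' is exactly the unjustified step: you yourself note that $\iota$ is not inner on $G$, so Lemma~\ref{index2} is unavailable, and knowing $\epi=1$ on the index two subgroup $M=Z\cdot K$ does not by itself decide the sign for a character of $G$.

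The paper's actual resolution of the one genuinely open case ($G=\CSO^{\pm}(2n,\F_q)$, $q$ odd) is a bilinear-form argument in the style of \cite[Theorem 2]{V06}, and this is the idea missing from your proposal. Since $\iota(g)$ is $G$-conjugate to $g^{-1}$ (from \cite{RV18}), each irreducible $(\pi,W)$ admits a nondegenerate form $B$ satisfying \eqref{BiFormInd}, unique up to scalar, and one must show $B$ is symmetric. Restrict $\pi$ to $H=Z\cdot \SO^{\pm}(2n,\F_q)$ and then to $K=\SO^{\pm}(2n,\F_q)$: either the restriction stays irreducible, in which case the known facts $\vep=1$ (Gow) or $\eps=1$ (\cite[Theorem 5.1(ii)]{TV17}) for $K$ produce a symmetric invariant form $B'$ on $W$, and Schur-lemma uniqueness forces $B$ to be a scalar multiple of $B'$, hence symmetric; or the restriction splits as $W_1\oplus W_2$ with non-isomorphic constituents, in which case one shows $B$ vanishes on $W_1\times W_2$ (otherwise the two constituents would be isomorphic), so $B$ is nondegenerate on $W_1$, is there proportional to the symmetric $K$-invariant form, and being symmetric on a nondegenerate subspace must be symmetric on $W$. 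This restriction-to-$K$ argument replaces both of your proposed routes and is what makes the $q$ odd case short; without it (or the explicit citations to \cite{V06} and \cite{RV18}), your proposal has a genuine gap.
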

\begin{proof} These statements in most cases follow from known results or from results in this paper.   Write $K = \SO^{\pm}(2n, \F_q)$ if $G = \CSO^{\pm}(2n, \F_q)$, and $K = \gO^{\pm}(2n, \F_q)$ if $G = \CO^{\pm}(2n, \F_q)$.  We first consider the statement on elements in $G$ being $\iota$-real.  

If $q$ is even, let $g \in G$, and write $g = \alpha k$ with $\alpha \in \F_q^{\times}$ and $k \in K$, so that $\mu(g) = \alpha^2$.  Then $\iota(g)^{-1} = \alpha \sigma(k)^{-1}$ if $K = \SO^{\pm}(2n, \F_q)$ and $n$ is odd, and $\iota(g)^{-1} = \alpha k^{-1}$ otherwise.  The statement now follows from Proposition \ref{sigmareal} in the first case, and the fact that $\gO^{\pm}(2n, \F_q)$ and $\SO^{\pm}(4m, \F_q)$ are strongly real \cite{ElNo82, Gow81, Ra11} in the other cases.  In the case that $q$ is odd, the statement is exactly \cite[Theorem 1]{V06} if $G = \CO^{\pm}(2n, \F_q)$ and is \cite[Proposition 7.2]{RV18} if $G = \CSO^{\pm}(2n, \F_q)$.

For the statement on characters, first consider the case $q$ is even.  Since $G = Z \times K$, then any $\chi \in \Irr(G)$ is of the form $\chi = \theta \otimes \psi$ with $\theta \in \Irr(Z)$ and $\rho \in \Irr(K)$.  Writing $g = \alpha k$ with $\alpha \in Z$ and $k \in K$, then $\chi(g) = \theta(\alpha) \rho(k)$, and $\iota(g) = \iota(\alpha) \iota(k) = \alpha^{-1} \iota(k)$, where $\iota(k) = \sigma(k)$ if $n$ is odd and $K = \SO^{\pm}(2n, \F_q)$, and $\iota(k) = k$ otherwise.  Now we have
\begin{align*}
\epi(\chi) & = \frac{1}{|G|} \sum_{g \in G} \chi(g \iota(g)) = \frac{1}{|Z|} \sum_{\alpha \in Z} \theta(\alpha \iota(\alpha)) \frac{1}{|K|} \sum_{k \in K} \rho(k \iota(k)) \\
& = \frac{1}{|Z|} \sum_{\alpha \in Z} \theta(\alpha \alpha^{-1}) \frac{1}{|K|} \sum_{k \in K} \rho(k \iota(k)) = \epi(\rho).
\end{align*}
Now $\epi(\chi) = \epi(\rho) = 1$ by Theorems \ref{OisTO}, \ref{SO4misTO}, and \ref{SOIndicators1}.  

If $q$ is odd, in the case $G = \CO^{\pm}(2n, \F_q)$ the fact that $\epi(\chi) = 1$ for all $\chi \in \Irr(G)$ is exactly \cite[Theorem 2]{V06}.  The only statement left to prove is that $\epi(\chi) = 1$ for all $\chi \in \Irr(G)$ with $G = \CSO^{\pm}(2n, \F_q)$, and we follow the same argument given in the proof of \cite[Theorem 2]{V06}.  Let $\chi \in \Irr(G)$ with $(\pi, W)$ an irreducible representation of $G$ with character $\chi$.  Since we know $\iota(g)$ is conjugate to $g^{-1}$ in $G$ for all $g \in G$ from above, then $\iota.\pi \cong \hat{\pi}$.  Then there is a non-degenerate bilinear form $B$ on $W$ with the property \eqref{BiFormInd}, and by \eqref{BiFormInd2} we must show that this $B$ is symmetric.  Let $H = Z \cdot K$, which is an index $2$ subgroup of $G$, and so $\pi$ restricted to $H$ is either an irreducible representation $(\pi', W)$, or a direct sum of two non-isomorphic irreducible representations $(\pi'_1, W_1)$ and $(\pi'_2, W_2)$ of $H$.  In the first case, $(\pi', W)$ restricted to $K = \SO^{\pm}(2n, \F_q)$ is an irreducible representation $\phi$.  Note that for any $k \in K$, $\iota(k) = k$ if $n$ is even and $\iota(k) = \sigma(k)$ if $n$ is odd.  So, for any $k \in K$ and $v, w \in W$ we have
$$B(\iota.\pi(k)v, \pi(k)w) = B(v, w) = B(\iota.\phi(k)v, \phi(k)w).$$
Since $\epi(\phi) = 1$ by \cite[Theorem 2]{Gow85} and \cite[Theorem 5.1(ii)]{TV17}, then there is a non-degenerate form $B'$ on $W$, unique up to scalar, such that
$$B'(\iota.\phi(k)v, \phi(k)w) = B'(v, w),$$
for all $k \in K$ and $v, w \in W$, and which is symmetric.  Thus $B$ must be a scalar multiple of $B'$, and since $B'$ is symmetric, then so is $B$.  Thus $\epi(\chi) = 1$ in this case.

In the case that $\pi$ restricted to $H$ is a direct sum of the non-isomorphic irreducible representations $(\pi'_1, W_1)$ and $(\pi'_2, W_2)$, these representations restrict to irreducible representations $(\phi_1, W_1)$ and $(\phi_2, W_2)$ of $K$.  As above, we have $\epi(\phi_1)=1$, so there is a non-degenerate form $B_1$ on $V_1$, unique up to scalar multiple, such that 
$$B_1(\iota.\phi_1(k)v, \phi_1(k)w) = B_1(v,w), \; \text{ for all } \; v, w \in W_1, k \in K,$$
and $B_1$ is symmetric.  Next we claim that the form $B$ on $W$ is non-degenerate on $W_1$.  Suppose that $B$ is non-degenerate on $W_1 \times W_2$.  Then we have, for any $k \in K$, $v_1 \in W_1$, $v_2 \in W_2$,
$$ B(\iota.\pi(k) v_1, \pi(k)v_2) =  B(\iota.\phi_1(k) v_1, \phi_2(k) v_2) =B(v_1, v_2).$$
With the assumption that $B$ is non-degenerate on $W_1 \times W_2$, this implies that $\iota.\phi_1 \cong \hat{\phi_2}$.  Then we have $\iota.\phi_1 \cong \hat{\phi_1}$ already, and so $\phi_1 \cong \phi_2$.  The central characters of $\pi_1$ and $\pi_2$ both agree with that of $\pi$, and so we have $\pi_1 \cong \pi_2$.  However we already know that $\pi_1$ and $\pi_2$ are non-isomorphic, and so it follows from Schur's Lemma that we must have $B$ is $0$ on $W_1 \times W_2$.  Since $B$ is non-degenerate on $W$ and $W = W_1 \oplus W_2$, it follows that $B$ must be non-degenerate on $W_1$.  But now for any $k \in K$ and any $v, w \in W_1$ we have 
$$ B(\iota.\phi_1(k)v, \phi_1(k)w) = B(v, w),$$
which means $B$ must be equal to $B_1$ on $W_1$ up to a scalar multiple.  Since $B_1$ is symmetric on $W_1$, then so must $B$, and since $B$ is symmetric on a non-degenerate subspace and must be either symmetric or skew-symmetric, then $B$ is symmetric on $W$.  Thus $\epi(\chi) = 1$.
\end{proof}

We may also define the \emph{conformal symplectic group} $\CSp_{2n}$ to be the group of transformations which leave the symplectic form $\langle \cdot, \cdot \rangle$ on $V=\overline{\F}_q^{2n}$ defined in Section \ref{symplectic}, invariant up to a scalar multiple.  If we again write $\beta(g)$ for the scalar associated with $g \in \CSp_{2n}$, then $\beta$ is a multiplicative character.  With $F$ the standard Frobenius, we define
$$\CSp(2n, \F_q) = \CSp_{2n}^F,$$
and $\beta: \CSp(2n, \F_q) \rightarrow \F_q^{\times}$ is a homomorphism with kernel given by $\Sp(2n, \F_q)$.  Note that a skew-symplectic element $y \in \CSp(2n, \F_q)$ is one with the property $\beta(y) = -1$, and if $q$ is even this is just an element in $\Sp(2n, \F_q)$.  As in the orthogonal case, for any scalar $\alpha \in \F_q^{\times}$ we have $\beta(\alpha I) = \alpha^2$, and so when $q$ is even we again have a direct product $\CSp(2n, \F_q) = Z \times \Sp(2n, \F_q)$, where $Z \cong \F_q^{\times}$ is the group of scalar matrices.

\subsection{Summary of known twisted reality results}

We conclude by gathering known results, from this paper and others, on twisted reality results for finite classical groups.  The following strongly suggests that there should be some twisted geometric version of our main result Theorem \ref{SimpleTO}.

\begin{theorem} \label{TwistList}Let $q$ be any prime power.  Suppose $G$ is any of the following groups, with the specified automorphism $\iota$ of $G$ such that $\iota^2 = 1$:
\begin{enumerate}
\item $\GL(n,\F_q)$ or $\GU(n, \F_q)$, with $\iota(g) = {^\top g}^{-1}$,
\item $\gO^{\pm}(n, \F_q)$, $\SO^{\pm}(4m, \F_q)$, or $\SO(2n+1, \F_q)$ with $\iota(g) = g$,
\item $\CO^{\pm}(2n, \F_q)$ or $\CSO^{\pm}(4m, \F_q)$ with $\iota(g) = \beta(g)^{-1} g$,
\item $\SO^{\pm}(4m+2, \F_q)$  with $\iota = \sigma$,
\item $\CO^{\pm}(4m+2, \F_q)$ or $\CSO^{\pm}(4m+2, \F_q)$ with $\iota(g)= \beta(g)^{-1} \sigma(g)$,
\item $\Sp(2n, \F_q)$ with $\iota(g) = ygy^{-1}$ and $y$ a skew-symplectic involution, 
\item $\CSp(2n, \F_q)$ with $\iota(g) = \beta(g)^{-1} ygy^{-1}$ and $y$ a skew-symplectic involution,
\end{enumerate}
or the projective versions of any of these groups with the induced automorphism.  Then we have:
\begin{itemize}
\item[(a):] $\epi(\chi) = 1$ for all $\chi \in \Irr(G)$, and
\item[(b):] every $g \in G$ is strongly $\iota$-real.
\end{itemize}
\end{theorem}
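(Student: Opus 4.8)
The plan is to prove Theorem \ref{TwistList} by treating it as a compilation: each of the seven families is handled separately, and within each family the cases $q$ odd and $q$ even are treated separately, since the relevant inputs — both the results of this paper and the external references — are organized along exactly those lines. In every case parts (a) and (b) are logically independent statements; I would establish (a) from the Jordan-decomposition and generating-function machinery of Sections \ref{GenFuns}--\ref{MainRs} (together with Lemma \ref{CharSumLemma} when it is convenient to pass between a character-degree sum and a twisted-involution count), and establish (b) from the appropriate strong-reality or twisted strong-reality statement.

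For the individual families: family (1), $\GL(n,\F_q)$ and $\GU(n,\F_q)$ with $\iota(g) = {}^\top g^{-1}$, is classical — part (b) is the statement that every invertible matrix over $\F_q$ (resp.\ every unitary matrix) is a product of two symmetric ones, and part (a) is the corresponding twisted-orthogonality result going back to Kawanaka--Matsuyama and refined in subsequent work. Family (2), the untwisted case $\iota = 1$: part (a) for $\gO^{\pm}(2n,\F_q)$ and $\SO^{\pm}(4m,\F_q)$ is Gow's theorem \cite{Gow85} for $q$ odd and Theorems \ref{OisTO} and \ref{SO4misTO} of this paper for $q$ even, while $\SO(2n+1,\F_q)$ for $q$ odd is in \cite{V17} and for $q$ even reduces to $\Sp(2n,\F_q)$; part (b) is the strong reality of these groups \cite{Gow81, ElNo82, Ra11, VG10}. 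Families (3) and (5), the conformal orthogonal groups, are exactly Proposition \ref{CSOtwist}, which for $q$ odd cites \cite{V06, RV18} and for $q$ even uses the direct-product decompositions $\CO^{\pm}(2n,\F_q) = Z \times \gO^{\pm}(2n,\F_q)$ and $\CSO^{\pm}(2n,\F_q) = Z \times \SO^{\pm}(2n,\F_q)$. Family (4): part (a) is Theorem \ref{SOIndicators1} (with \cite[Theorem 5.1(ii)]{TV17} for $q$ odd) and part (b) is Proposition \ref{sigmareal} (with the argument via \cite[Lemma 4.7]{SFV16} for $q$ odd). Family (6), $\Sp(2n,\F_q)$ with a skew-symplectic involution: for $q$ odd, (a) is the main result of \cite{V05}, reproved in Section \ref{symplectic}, and (b) is the corresponding strong $\iota$-reality from \cite{V05}; for $q$ even a skew-symplectic involution is an ordinary involution, so $\iota$ is inner and (a), (b) become the total orthogonality of $\Sp(2n,\F_q)$ (\cite{V17}) and its strong reality (\cite{Gow81}). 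Family (7), $\CSp(2n,\F_q)$: for $q$ even the decomposition $\CSp(2n,\F_q) = Z \times \Sp(2n,\F_q)$ reduces it to family (6) verbatim as in the proof of Proposition \ref{CSOtwist}, and for $q$ odd the same similitude-character argument applies.

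For the projective versions, part (a) follows at once from Lemma \ref{modZ}, since in each case $\iota$ normalizes the center $Z$ of $G$ and hence descends to $G/Z$; part (b) is automatic, because if $g = y_1 y_2$ with $\iota(y_i) = y_i^{-1}$ in $G$, then $gZ = (y_1 Z)(y_2 Z)$ with $\iota(y_i Z) = \iota(y_i)Z = (y_i Z)^{-1}$ in $G/Z$. I do not expect any single deep obstacle here; the real work is the bookkeeping — verifying that for each family the cited results genuinely cover the stated automorphism and both parities of $q$, and, where the literature is thinnest (the strong $\iota$-real half of families (1) and (6) over the specific fields at hand, and family (7) for $q$ odd), supplying a short argument modeled on the conformal-group or symplectic-group case rather than a direct citation.
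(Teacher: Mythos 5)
Your proposal takes essentially the same route as the paper's own proof: a family-by-family compilation in which part (a) is assembled from \cite{Mac95, TV07, Gow85, TV17, V17, V05}, Theorems \ref{OisTO}, \ref{SO4misTO}, \ref{SOIndicators1}, Proposition \ref{CSOtwist}, and Lemma \ref{modZ} for the projective quotients, while part (b) is assembled from the strong (twisted) reality literature \cite{Wo66, Gow81, ElNo82, Ra11, RV18} together with Proposition \ref{sigmareal}. The only quibbles are citation-level: part (a) for $\SO(2n+1, \F_q)$ with $q$ odd is \cite[Theorem 2]{Gow85} rather than \cite{V17} (which concerns characteristic two), and part (b) for $\GU(n,\F_q)$ and the similitude groups is not classical but is supplied by \cite{RV18}, so the short ad hoc arguments you anticipate needing are in fact already in the references the paper uses.
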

\begin{proof}  For (a), the statement follows for $\GL(n, \F_q)$ for any $q$ by \cite[IV.6, Example 5]{Mac95}, and for $\GU(n, \F_q)$ by \cite[Corollary 5.2]{TV07}.  It follows for $\gO^{\pm}(2n, \F_q)$ and $\SO^{\pm}(2n, \F_q)$ for $q$ odd by \cite[Theorems 1 and 2]{Gow85} and \cite[Theorem 5.1(ii)]{TV17}, and for $q$ even by Theorems \ref{OisTO}, \ref{SO4misTO}, and \ref{SOIndicators1} above.  For $\SO(2n+1, \F_q)$ with $q$ odd, (a) follows also by \cite[Theorem 2]{Gow85}, and with $q$ even follows from \cite[Theorem 4.2]{V17}.  For $\CO^{\pm}(2n, \F_q)$ and $\CSO^{\pm}(2n, \F_q)$ with any $q$, the statement is given by Proposition \ref{CSOtwist} above.  For $\Sp(2n, \F_q)$ with $q$ odd, this is proved in Section \ref{symplectic} above, or is given by \cite[Theorem 6.1(ii)]{V05}, and for $\CSp(2n, \F_q)$ with $q$ odd this is \cite[Theorem 6.2]{V05}.  When $q$ is even, the statement for $\Sp(2n, \F_q) \cong \SO(2n+1, \F_q)$ is \cite[Theorem 4.2]{V17}, and for $\CSp(2n, \F_q) \cong \Sp(2n, \F_q) \times \F_q^{\times}$ this follows from an argument exactly like that in the third paragraph of the proof of Proposition \ref{CSOtwist}.  For the projective versions of all of these groups, statement (a) follows directly from Lemma \ref{modZ}.

For statement (b), the case for $G = \GL(n, \F_q)$ is the classical result that any invertible matrix is conjugated to its transpose by a symmetric matrix.  The cases of $\Sp(2n, \F_q)$ and $\gO^{\pm}(n, \F_q)$ with $q$ odd were proved originally by Wonenburger \cite{Wo66}.  Many of the other results are proved in various forms in several references, although these are largely generalized in \cite{RV18}, which covers all cases except for $\gO^{\pm}(2n, \F_q)$ and $\SO^{\pm}(2n, \F_q)$ with $q$ even, which are proved in \cite{ElNo82, Gow81} and in \cite{Ra11} and Proposition \ref{sigmareal}, respectively.  The fact that the projective versions of these groups are also strongly $\iota$-real follows immediately.
\end{proof}

We note that, included in the list of groups in Theorem \ref{TwistList}, are all classical simple groups of adjoint type over a finite field (that is, all finite groups of Lie type which are simple adjoint of type $A_n$, ${^2 A_n}$, $B_n$, $C_n$, $D_n$, or ${^2 D_n}$).

\end{document}